%% LyX 2.2.3 created this file.  For more info, see http://www.lyx.org/.
%% Do not edit unless you really know what you are doing.
\documentclass[12pt,english,magyar,british]{article}
\usepackage[T1]{fontenc}
\usepackage[latin2,latin9]{inputenc}
\usepackage{geometry}
\geometry{verbose,tmargin=2.3cm,bmargin=2.3cm,lmargin=2.6cm,rmargin=2.6cm}
\usepackage{amsmath}
\usepackage{amsthm}
\usepackage{amssymb}

\makeatletter
%%%%%%%%%%%%%%%%%%%%%%%%%%%%%% Textclass specific LaTeX commands.
\theoremstyle{plain}
\newtheorem{thm}{\protect\theoremname}[section]
  \theoremstyle{remark}
  \newtheorem{rem}[thm]{\protect\remarkname}
  \theoremstyle{plain}
  \newtheorem{cor}[thm]{\protect\corollaryname}
  \theoremstyle{plain}
  \newtheorem{conjecture}[thm]{\protect\conjecturename}
  \theoremstyle{plain}
  \newtheorem{prop}[thm]{\protect\propositionname}
  \theoremstyle{plain}
  \newtheorem{lem}[thm]{\protect\lemmaname}
  \theoremstyle{remark}
  \newtheorem{note}[thm]{\protect\notename}

\makeatother

\usepackage{babel}
  \addto\captionsbritish{\renewcommand{\conjecturename}{Conjecture}}
  \addto\captionsbritish{\renewcommand{\corollaryname}{Corollary}}
  \addto\captionsbritish{\renewcommand{\lemmaname}{Lemma}}
  \addto\captionsbritish{\renewcommand{\notename}{Note}}
  \addto\captionsbritish{\renewcommand{\propositionname}{Proposition}}
  \addto\captionsbritish{\renewcommand{\remarkname}{Remark}}
  \addto\captionsbritish{\renewcommand{\theoremname}{Theorem}}
  \addto\captionsenglish{\renewcommand{\conjecturename}{Conjecture}}
  \addto\captionsenglish{\renewcommand{\corollaryname}{Corollary}}
  \addto\captionsenglish{\renewcommand{\lemmaname}{Lemma}}
  \addto\captionsenglish{\renewcommand{\notename}{Note}}
  \addto\captionsenglish{\renewcommand{\propositionname}{Proposition}}
  \addto\captionsenglish{\renewcommand{\remarkname}{Remark}}
  \addto\captionsenglish{\renewcommand{\theoremname}{Theorem}}
  \addto\captionsmagyar{\renewcommand{\conjecturename}{Feltevés}}
  \addto\captionsmagyar{\renewcommand{\corollaryname}{Következmény}}
  \addto\captionsmagyar{\renewcommand{\lemmaname}{Segédtétel}}
  \addto\captionsmagyar{\renewcommand{\notename}{Megjegyzés}}
  \addto\captionsmagyar{\renewcommand{\propositionname}{Állítás}}
  \addto\captionsmagyar{\renewcommand{\remarkname}{Észrevétel}}
  \addto\captionsmagyar{\renewcommand{\theoremname}{Tétel}}
  \providecommand{\conjecturename}{Conjecture}
  \providecommand{\corollaryname}{Corollary}
  \providecommand{\lemmaname}{Lemma}
  \providecommand{\notename}{Note}
  \providecommand{\propositionname}{Proposition}
  \providecommand{\remarkname}{Remark}
\providecommand{\theoremname}{Theorem}

\begin{document}
\selectlanguage{english}%
\date{}

\title{Dimension approximation of attractors of graph directed IFSs by self-similar
sets}

\author{\'Abel Farkas}
\maketitle
\selectlanguage{british}%
\begin{abstract}
We show that for the attractor $(K_{1},\dots,K_{q})$ of a graph directed
iterated function system, for each $1\leq j\leq q$ and $\varepsilon>0$
there exists a self-similar set $K\subseteq K_{j}$ that satisfies
the strong separation condition and $\dim_{H}K_{j}-\varepsilon<\dim_{H}K$.
We show that we can further assume convenient conditions on the orthogonal
parts and similarity ratios of the defining similarities of $K$.
Using this property as a `black box' we obtain results on a range
of topics including on dimensions of projections, intersections, distance
sets and sums and products of sets.
\end{abstract}

\section{Introduction}

The main goal of this paper is to develop a tool to deduce results
about attractors of graph directed iterated function systems from
results that are known for self-similar sets. We proceed by finding
a self-similar subset of a given graph directed attractor such that
the Hausdorff dimension of the self-similar set is arbitrary close
to that of the graph directed attractor and the self-similar set has
convenient properties such as the strong separation condition. Similar
methods, approximating self-similar sets with well-behaved self-similar
sets in the plane, were used by Peres and Shmerkin \cite[Proposition 6, Theorem 2]{Peres-Schmerkin Resonance between Cantor sets},
by Orponen \cite[Lemma 3.4]{Orponen distance set} and in higher dimensions
by Farkas \cite[Proposition 1.8]{Farkas-Linim-only}. Fraser and Pollicott
\cite[Proposition 2.5]{Fraser-Pollicott SSOFT} showed a result of
similar nature for conformal systems on subshifts of finite type.
After stating the approximation theorems we deduce many corollaries
relating to the dimension of projections and smooth images, the distance
set conjecture, the dimension of arithmetic products and dimension
conservation.

The paper is organised as follows. In Section \ref{subsec:Notations}
we introduce notation and definitions. In Section \ref{subsec:Main results}
we state the main results of this paper, the approximation theorems.
In Section \ref{subsec:Application GDA} we deduce various corollaries
that follow from the approximation theorems. In Section \ref{subsec:SSOFT section}
we restate the approximation theorems for subshifts of finite type
and note that the results of Section \ref{subsec:Application GDA}
remain true. Section \ref{sec:Proofs} contains the proofs of the
theorems.
\selectlanguage{english}%

\subsection{Definitions and Notations\label{subsec:Notations}}

A \textit{self-similar iterated function system} (SS-IFS) in $\mathbb{R}^{d}$
is a finite collection of maps $\left\{ S_{i}\right\} _{i=1}^{m}$
from $\mathbb{R}^{d}$ to $\mathbb{R}^{d}$ such that all the $S_{i}$
are contracting similarities. The \textit{attractor} of the SS-IFS
is the unique nonempty compact set $K\subseteq\mathbb{R}^{d}$ such
that $K=\bigcup_{i=1}^{m}S_{i}(K)$. The attractor of an SS-IFS is
called a \textit{self-similar set}. We say that the SS-IFS $\left\{ S_{i}\right\} _{i=1}^{m}$
satisfies the \textit{strong separation condition} (SSC) if the sets
$\left\{ S_{i}(K)\right\} _{i=1}^{m}$ are disjoint. 

Let $\left\{ S_{i}\right\} _{i=1}^{m}$ be an SS-IFS. Then every $S_{i}$
can be uniquely decomposed as
\begin{equation}
S_{i}(x)=r_{i}T_{i}(x)+v_{i}\label{eq:S_ideconposition}
\end{equation}
for all $x\in\mathbb{R}^{d}$, where $0<r_{i}<1$, $T_{i}$ is an
orthogonal transformation and $v_{i}\in\mathbb{R}^{d}$ is a translation,
for all indices $i$. The unique solution $s$ of the equation
\begin{equation}
\sum_{i=1}^{m}r_{i}^{s}=1\label{eq: S-dim sum}
\end{equation}
is called the \textit{similarity dimension} of the SS-IFS. It is well-known
that if an SS-IFS satisfies the SSC then $0<\mathcal{H}^{s}(K)<\infty$.
Let $\mathcal{T}$ denote the group generated by the orthogonal transformations
$\left\{ T_{i}\right\} _{i=1}^{m}$. We call $\mathcal{T}$ the \textit{transformation
group} of the SS-IFS.

We denote the set $\left\{ 1,2,\ldots,m\right\} $ by $\mathcal{I}$.
Let $\boldsymbol{\mathbf{i}}=(i_{1},\ldots,i_{k})\in\mathcal{I}^{k}$
i.e. $\boldsymbol{\mathbf{i}}$ is a $k$-tuple of indices. Then we
write $S_{\boldsymbol{\mathbf{i}}}=S_{i_{1}}\circ\ldots\circ S_{i_{k}}$
and $K_{\boldsymbol{\mathbf{i}}}=S_{\boldsymbol{\mathbf{i}}}(K)$.
When the similarities are decomposed as in (\ref{eq:S_ideconposition})
we write $r_{\boldsymbol{\mathbf{i}}}=r_{i_{1}}\cdot\ldots\cdot r_{i_{k}}$
and $T_{\boldsymbol{\mathbf{i}}}=T_{i_{1}}\circ\ldots\circ T_{i_{k}}$.
For an overview of the theory of self-similar sets see, for example,
\cite{Falconer1,Falconer Techniques,Hutchinson,Mattilakonyv,Schief OSC}.

Let $G\left(\mathcal{V},\mathcal{E}\right)$ be a directed graph,
where $\mathcal{V}=\left\{ 1,2,\ldots,q\right\} $ is the set of vertices
and $\mathcal{E}$ is the finite set of directed edges such that for
each $i\in\mathcal{V}$ there exists at least one $e\in\mathcal{E}$
starting from $i$. Let $\mathcal{E}_{i,j}$ denote the set of edges
from vertex $i$ to vertex $j$ and $\mathcal{E}_{i,j}^{k}$ denote
the set of sequences of $k$ edges $\left(e_{1},\ldots,e_{k}\right)$
which form a directed path from vertex $i$ to vertex $j$. A \textit{graph
directed iterated function system} (GD-IFS) in $\mathbb{R}^{d}$ is
a finite collection of maps $\left\{ S_{e}:e\in\mathcal{E}\right\} $
from $\mathbb{R}^{d}$ to $\mathbb{R}^{d}$ such that all the $S_{e}$
are contracting similarities. The \textit{attractor} of the GD-IFS
is the unique $q$-tuple of nonempty compact sets $\left(K_{1},\ldots,K_{q}\right)$
such that
\begin{equation}
K_{i}=\bigcup_{j=1}^{q}\bigcup_{e\in\mathcal{E}_{i,j}}S_{e}(K_{j}).\label{eq:GDAdef}
\end{equation}
The attractor of a GD-IFS is called a \textit{graph directed attractor}.

Let $\left\{ S_{e}:e\in\mathcal{E}\right\} $ be a GD-IFS. Then every
$S_{e}$ can be uniquely decomposed as
\begin{equation}
S_{e}(x)=r_{e}T_{e}(x)+v_{e}\label{eq:S_edeconposition for GDA}
\end{equation}
for all $x\in\mathbb{R}^{d}$, where $0<r_{e}<1$, $T_{e}$ is an
orthogonal transformation and $v_{e}\in\mathbb{R}^{d}$ is a translation,
for all edges $e$.

Let $\boldsymbol{\mathbf{e}}=(e_{1},\ldots,e_{k})\in\mathcal{E}_{i,j}^{k}$,
then we write $S_{\boldsymbol{\mathbf{e}}}$ for $S_{e_{1}}\circ\ldots\circ S_{e_{k}}$
and $K_{\boldsymbol{\mathbf{e}}}$ for $S_{\boldsymbol{\mathbf{e}}}(K_{j})\subseteq K_{i}$.
If the similarities are decomposed as in (\ref{eq:S_edeconposition for GDA})
then we write $r_{\boldsymbol{\mathbf{e}}}$ for $r_{e_{1}}\cdot\ldots\cdot r_{e_{k}}$
and $T_{\boldsymbol{\mathbf{e}}}$ for $T_{e_{1}}\circ\ldots\circ T_{e_{k}}$.
If $\boldsymbol{\mathbf{e}}=(e_{1},\ldots,e_{k})\in\mathcal{E}_{i,j}^{k}$
and $\boldsymbol{\mathbf{f}}=(f_{1},\ldots,f_{n})\in\mathcal{E}_{j,l}^{n}$
for $i,j,l\in\mathcal{V}$ then we write $\boldsymbol{\mathbf{e}}*\boldsymbol{\mathbf{f}}$
for $(e_{1},\ldots,e_{k},f_{1},\ldots,f_{n})\in\mathcal{E}_{i,l}^{k+n}$.

The directed graph $G\left(\mathcal{V},\mathcal{E}\right)$ is called
\textsl{strongly connected} if for every pair of vertices $i$ and
$j$ there exist a directed path from $i$ to $j$ and a directed
path from $j$ to $i$. We say that the GD-IFS $\left\{ S_{e}:e\in\mathcal{E}\right\} $
is \textsl{strongly connected} if $G\left(\mathcal{V},\mathcal{E}\right)$
is strongly connected. For an overview of the theory of graph directed
attractors see, for example, \cite{Falconer Techniques,Mauldin Williams construction,Wang GDA OSC}.

The set $\mathcal{C}_{i}:=\bigcup_{k=1}^{\infty}\mathcal{E}_{i,i}^{k}$
is the set of directed cycles of $G\left(\mathcal{V},\mathcal{E}\right)$
that start and end in vertex $i$. Equipped with the $*$ operation
$\mathcal{C}_{i}$ becomes a semigroup. Let $\mathcal{T}_{i,G}$ denote
the group generated by the transformations
\[
\left\{ T_{e_{1}}\circ\ldots\circ T_{e_{k}}:\left(e_{1},\ldots,e_{k}\right)\in\mathcal{C}_{i}\right\} 
\]
and we call $\mathcal{T}_{i,G}$ the \textit{$i$-th transformation
group} of the GD-IFS. It is easy to see that if the GD-IFS is strongly
connected then $\mathcal{T}_{i,G}$ is conjugate to $\mathcal{T}_{j,G}$
for all $i,j\in\mathcal{V}$ and hence
\[
\left|\mathcal{T}_{i,G}\right|=\left|\mathcal{T}_{j,G}\right|,
\]
where $\left|.\right|$ denote the cardinality of a set.

To avoid the singular non-interesting case, when every $K_{i}$ is
a single point we make the global assumption throughout the whole
paper that there exists $i\in\mathcal{V}$ such that $K_{i}$ contains
at least two points. This implies that if $\left\{ S_{e}:e\in\mathcal{E}\right\} $
is strongly connected then every $K_{j}$ contains at least two points.
Note that if $\left\{ S_{e}:e\in\mathcal{E}\right\} $ is strongly
connected then this assumption also implies that $\dim_{H}K_{i}=\dim_{H}K_{j}>0$
for all $i,j\in\mathcal{V}$ even with no separation condition.

\selectlanguage{british}%
We use the following notation throughout the paper. We let $\dim_{H}K$
be the Hausdorff dimension of a set $K$. Let $\mathbb{O}_{d}$ be
the group of all orthogonal transformations on $\mathbb{R}^{d}$ and
$\mathbb{SO}_{d}$ be the group of special orthogonal transformations
on $\mathbb{R}^{d}$ both of them equipped with the usual topology.
If $\mathcal{T}$ is a set of orthogonal transformations then $\overline{\mathcal{T}}$
denotes the closure of $\mathcal{T}$. We denote the identity map
of $\mathbb{R}^{d}$ by $Id_{\mathbb{R}^{d}}$. For a linear transformation
$L:\mathbb{R}^{d}\longrightarrow\mathbb{R}^{d_{2}}$\foreignlanguage{english}{
the Euclidean operator norm is} \foreignlanguage{english}{
\[
\left\Vert T\right\Vert =\sup_{x\in\mathbb{R}^{d},\left\Vert x\right\Vert =1}\left\Vert Tx\right\Vert ,
\]
where $\left\Vert y\right\Vert $ denotes the Euclidean norm of $y\in\mathbb{R}^{d}$.}
Let $\mathcal{L}^{d}$ be the $d$-dimensional Lebesgue measure. We
write $B(x,\gamma)\subseteq\mathbb{R}^{d}$ for the open ball of radius
$\gamma$ centered at $x$.
\selectlanguage{english}%

\subsection{The approximation theorems\label{subsec:Main results}}

\selectlanguage{british}%
In this section we state the main results of this paper, the approximation
theorems. Given a graph directed attractor we find a self-similar
subset that has arbitrarily close dimension and satisfies the strong
separation condition. The first result shows that we can further require
that the transformation group of the self-similar set is dense in
that of the graph directed attractor.
\begin{thm}
\label{thm: MAIN}Let $\left\{ S_{e}:e\in\mathcal{E}\right\} $ be
a strongly connected GD-IFS in $\mathbb{R}^{d}$ with attractor $\left(K_{1},\ldots,K_{q}\right)$
and let $j\in\mathcal{V}$. Then for every $\varepsilon>0$ there
exists an SS-IFS $\left\{ S_{i}\right\} _{i=1}^{m}$ that satisfies
the SSC, with attractor $K$ such that $K\subseteq K_{j}$, $\dim_{H}K_{j}-\varepsilon<\dim_{H}K$
and the transformation group $\mathcal{T}$ of $\left\{ S_{i}\right\} _{i=1}^{m}$
is dense in $\mathcal{T}_{j,G}$.
\end{thm}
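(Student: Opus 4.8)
The plan is to iterate the graph-directed system along long cycles based at vertex $j$ in order to manufacture a genuine self-similar IFS whose pieces sit inside $K_j$, whose dimension approaches $\dim_H K_j$, and whose orthogonal parts generate a dense subgroup of $\mathcal{T}_{j,G}$. First I would fix a small scale $\delta>0$ and consider all cycles $\mathbf{e}\in\mathcal{C}_j=\bigcup_k\mathcal{E}_{j,j}^k$ with $r_{\mathbf{e}}$ comparable to $\delta$ (say $c\delta\le r_{\mathbf{e}}<\delta$ for a suitable constant depending only on the minimal contraction ratio). Each such $\mathbf{e}$ gives a similarity $S_{\mathbf{e}}$ with $S_{\mathbf{e}}(K_j)\subseteq K_j$, and the collection $\{S_{\mathbf{e}}\}$ is a candidate SS-IFS with attractor contained in $K_j$. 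The standard pressure/renewal estimate for strongly connected graph-directed systems shows that the similarity dimension of this family converges to $\dim_H K_j$ as $\delta\to 0$ (this uses that $\dim_H K_j$ equals the value $s$ for which the spectral radius of the matrix $\bigl(\sum_{e\in\mathcal{E}_{i,i'}} r_e^{s}\bigr)_{i,i'}$ is $1$, together with strong connectedness).

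Next I would arrange the strong separation condition. This is where one must spend some care: the cylinders $S_{\mathbf{e}}(K_j)$ for distinct cycles $\mathbf{e}$ of comparable length need not be disjoint, nor need $K_j$ have nonempty interior to play with. The remedy is a two-step selection. Using the global assumption that some $K_i$, hence $K_j$, has at least two points, pick two points $a\ne b$ in $K_j$; there is a path from $j$ to $j$ passing through the vertex separating $a$ and $b$, so one finds a cycle $\mathbf{g}\in\mathcal{C}_j$ such that $S_{\mathbf{g}}(K_j)$ has diameter strictly less than $\tfrac12\mathrm{dist}$ between two chosen subpieces of $K_j$; more robustly, one shows there exist finitely many cycles $\mathbf{g}_1,\dots,\mathbf{g}_N\in\mathcal{C}_j$ with $S_{\mathbf{g}_t}(K_j)$ pairwise disjoint (a "separated block"). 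Then I would compose: replace each $S_{\mathbf{e}}$ from the comparable-scale family by $S_{\mathbf{e}}\circ S_{\mathbf{g}_t}$, $1\le t\le N$; the images $S_{\mathbf{e}}(S_{\mathbf{g}_t}(K_j))$ are separated inside each $S_{\mathbf{e}}(K_j)$, and for $\delta$ small enough the blocks $S_{\mathbf{e}}(K_j)$ for different $\mathbf{e}$ can also be kept apart, or one simply refines once more. One checks the similarity dimension of the composed family is at least (similarity dimension of the $\mathbf{e}$-family) minus a term that $\to 0$; this forces the dimension to still converge to $\dim_H K_j$. I would formalize the bookkeeping with the matrix-pressure estimate rather than hand-tuning individual cylinders.

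Finally I would secure density of the transformation group $\mathcal{T}$ of the constructed SS-IFS in $\mathcal{T}_{j,G}$. Since $\mathcal{T}_{j,G}$ is by definition generated by $\{T_{\mathbf{e}}:\mathbf{e}\in\mathcal{C}_j\}$, and $\mathcal{T}_{j,G}$ is a closed subgroup of $\mathbb{O}_d$ (hence a compact Lie group), it suffices to throw into the IFS a fixed finite set of generating cycles: choose cycles $\mathbf{h}_1,\dots,\mathbf{h}_p\in\mathcal{C}_j$ whose orthogonal parts $T_{\mathbf{h}_1},\dots,T_{\mathbf{h}_p}$ topologically generate $\overline{\mathcal{T}_{j,G}}=\mathcal{T}_{j,G}$ (possible because the ambient group is compact and second countable, so some finite subset of any generating set is dense). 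Then adjoin, for each generator, a composition $S_{\mathbf{e}}\circ S_{\mathbf{h}_\ell}$ (suitably placed inside a separated block so SSC is preserved) to the family. The orthogonal part of such a map is $T_{\mathbf{e}}T_{\mathbf{h}_\ell}$, and combined with the other maps already present this generates a group containing a dense subgroup of $\mathcal{T}_{j,G}$, while adding only boundedly many maps, so the dimension estimate is unaffected in the limit $\delta\to 0$.

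The main obstacle is the simultaneous control of the two competing requirements: strong separation forces us to pass to deep, hence short-ratio, cylinders and to "waste" some cylinders as spacers, which decreases similarity dimension, while we need the similarity dimension to stay within $\varepsilon$ of $\dim_H K_j$. Quantifying that the loss from separation and from adjoining the fixed generating/spacer cycles is $o(1)$ as the scale $\delta\to 0$ — uniformly, via the spectral-radius characterization of $\dim_H K_j$ for strongly connected graph-directed systems — is the technical heart of the argument; the density statement is then essentially a soft addendum.
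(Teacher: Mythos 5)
Your overall architecture (iterate along cycles in $\mathcal{C}_j$, separate, adjoin generators) is in the right spirit, but there is a genuine gap at the heart of the dimension estimate. You assert that $\dim_H K_j$ equals the value $s$ for which the spectral radius of $\bigl(\sum_{e\in\mathcal{E}_{i,i'}}r_e^{s}\bigr)_{i,i'}$ is $1$; this is false in the setting of the theorem, which imposes \emph{no} separation condition on the GD-IFS, and with overlaps one only has $\dim_H K_j\le s$. More importantly, your mechanism for achieving the SSC does not work: composing each $S_{\mathbf{e}}$ with a ``separated block'' $S_{\mathbf{g}_1},\dots,S_{\mathbf{g}_N}$ only separates the sub-pieces \emph{inside} a single cylinder $S_{\mathbf{e}}(K_j)$, while the cylinders $S_{\mathbf{e}}(K_j)$ and $S_{\mathbf{e}'}(K_j)$ for distinct cycles of comparable scale may overlap heavily, and these overlaps persist at every scale $\delta$ --- ``refining once more'' does not remove them. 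The correct replacement (which is what the paper does, following Peres--Shmerkin) is to take a \emph{maximal pairwise disjoint} subcollection of the scale-$\delta$ cylinders and to bound its cardinality from below not by any pressure/spectral estimate but by the fact that $\mathcal{H}^{t-\varepsilon/2}(K_j)=\infty$ for $t=\dim_H K_j$: by maximality the $\delta$-neighbourhoods of the selected cylinders form a $3\delta$-cover of $K_j$, forcing at least $(3\delta)^{-(t-\varepsilon/2)}$ of them, which yields similarity dimension at least $t-\varepsilon$ for the disjoint subfamily. Without this counting step your construction gives no lower bound on $\dim_H K$ at all.

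The density step also has a gap. Adjoining maps of the form $S_{\mathbf{e}}\circ S_{\mathbf{h}_\ell}$ produces orthogonal parts $T_{\mathbf{e}}T_{\mathbf{h}_\ell}$, and the group these generate (together with the other maps present) need not contain, or be dense in, the group generated by $T_{\mathbf{h}_1},\dots,T_{\mathbf{h}_p}$; you would recover $T_{\mathbf{h}_\ell}$ only if $S_{\mathbf{e}}$ itself were also in the IFS, which then clashes with the SSC since $S_{\mathbf{e}}S_{\mathbf{h}_\ell}(K_j)\subseteq S_{\mathbf{e}}(K_j)$. The paper resolves this tension differently: it first modifies a finite generating set of cycles so that the maps $S_{\mathbf{f}_1},\dots,S_{\mathbf{f}_k}$ have pairwise distinct fixed points (this modification, pre-composition with powers of $S_{\mathbf{f}_1}$ or $S_{\mathbf{f}_2}$, does not change the generated group), then passes to high powers $S_{\mathbf{f}_i}^{k_i}$ so that the images contract toward their distinct fixed points and become disjoint, using the fact that for any $T\in\mathbb{O}_d$ and any $N$ there is $k\ge N$ with $\langle T^{k}\rangle$ dense in $\langle T\rangle$ to preserve topological generation of $\mathcal{T}_{j,G}$. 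The high-dimension part and the dense-group part are then built in \emph{disjoint} regions of $K_j$ and simply united into one SS-IFS; you would need some such decoupling to make your outline rigorous.
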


\begin{rem}
\label{quocient remark}Let $\left\{ S_{e}:e\in\mathcal{E}\right\} $
and $\left\{ \widehat{S}_{e}:e\in\widehat{\mathcal{E}}\right\} $
be two strongly connected GD-IFS in $\mathbb{R}^{d}$, with attractors
$\left(K_{1},\ldots,K_{q}\right)$ and $\left(\widehat{K}_{1},\ldots,\widehat{K}_{\widehat{q}}\right)$,
such that there exist $j\in\mathcal{V}$, $\widehat{j}\in\widehat{\mathcal{V}}$
and $\boldsymbol{\mathbf{e}}\in\mathcal{C}_{j}$, $\boldsymbol{\mathbf{f}}\in\widehat{\mathcal{C}}_{\widehat{j}}$
such that $\log r_{\boldsymbol{\mathbf{e}}}/\log r_{\boldsymbol{\mathbf{f}}}\notin\mathbb{Q}$.
Then we can find SS-IFS $\left\{ S_{i}\right\} _{i=1}^{m}$ and $\left\{ \widehat{S}_{i}\right\} _{i=1}^{\widehat{m}}$
that satisfy the SSC, with attractors $K$ and $\widehat{K}$ such
that $K\subseteq K_{j}$, $\dim_{H}K_{j}-\varepsilon<\dim_{H}K$,
$\widehat{K}\subseteq\widehat{K}_{j}$, $\dim_{H}\widehat{K}_{j}-\varepsilon<\dim_{H}\widehat{K}$,
the transformation group $\mathcal{T}$ of $\left\{ S_{i}\right\} _{i=1}^{m}$
is dense in $\mathcal{T}_{j,G}$, the transformation group $\widehat{\mathcal{T}}$
of $\left\{ \widehat{S}_{i}\right\} _{i=1}^{\widehat{m}}$ is dense
in $\widehat{\mathcal{T}}_{j,G}$ and $\log r_{1}/\log\widehat{r}_{1}\notin\mathbb{Q}$.
See Remark \ref{log r}.
\end{rem}

In the next result instead of the dense subgroup condition we can
require that the first level cylinder sets of the self-similar set
are the same size and `roughly homothetic', i.e. all the similarity
ratios are the same and the orthogonal parts are $\varepsilon$-close.

\begin{thm}
\label{thm:approx trafo thm}Let $\left\{ S_{e}:e\in\mathcal{E}\right\} $
be a strongly connected GD-IFS in $\mathbb{R}^{d}$ with attractor
$\left(K_{1},\ldots,K_{q}\right)$ and let $j\in\mathcal{V}$. Then
for every $\varepsilon>0$ and $O\in\overline{\mathcal{T}_{j,G}}$
there exist $r\in(0,1)$ and an SS-IFS $\left\{ S_{i}\right\} _{i=1}^{m}$
that satisfies the SSC, with attractor $K$ such that $K\subseteq K_{j}$,
$\dim_{H}K_{j}-\varepsilon<\dim_{H}K$, and $\left\Vert T_{i}-O\right\Vert <\varepsilon$,
$r_{i}=r$ for every $i\in\left\{ 1,\dots,m\right\} $.
\end{thm}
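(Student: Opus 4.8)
The strategy is to use Theorem~\ref{thm: MAIN} to reduce to a self-similar subset with a dense transformation group, and then to assemble the required homothetic system out of cylinder sets having one prescribed letter-frequency, which is what will force the similarity ratios to be \emph{exactly} equal.

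First I would apply Theorem~\ref{thm: MAIN} with $\varepsilon/2$ in place of $\varepsilon$ (we may clearly assume $\varepsilon<\dim_{H}K_{j}$), obtaining an SS-IFS $\left\{\phi_{i}\right\}_{i=1}^{n}$ that satisfies the SSC, with attractor $K'\subseteq K_{j}$, with $\dim_{H}K_{j}-\varepsilon/2<\dim_{H}K'=:s'$ (so $s'>0$), and with transformation group dense in $\mathcal{T}_{j,G}$. Decompose $\phi_{i}=\rho_{i}U_{i}(\cdot)+w_{i}$, and for a word $\mathbf{i}$ over $\left\{1,\dots,n\right\}$ let $\rho_{\mathbf{i}},U_{\mathbf{i}}$ denote the ratio and orthogonal part of $\phi_{\mathbf{i}}$. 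Since a closed sub-semigroup of a compact group is a subgroup, the closure of $\left\{U_{\mathbf{i}}:\mathbf{i}\text{ a word}\right\}$ equals the closure of the group it generates, namely $\overline{\mathcal{T}_{j,G}}$, so the given $O$ lies in that closure. Put $p_{i}=\rho_{i}^{s'}$ (hence $\sum_{i}p_{i}=1$ by the SSC), $\chi=-\sum_{i}p_{i}\log\rho_{i}>0$ and $H=-\sum_{i}p_{i}\log p_{i}$, and record the elementary identity $H=s'\chi$.

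Next I would fix a cover of the compact group $\overline{\mathcal{T}_{j,G}}$ by finitely many sets $P_{1},\dots,P_{N_{0}}$ of operator-norm diameter less than $\varepsilon/4$, pick $c_{l}\in P_{l}$, and --- using density of $\left\{U_{\mathbf{i}}\right\}$ in $\overline{\mathcal{T}_{j,G}}\ni Oc_{l}^{-1}$ --- fix words $\mathbf{w}_{1},\dots,\mathbf{w}_{N_{0}}$ with $\left\Vert U_{\mathbf{w}_{l}}-Oc_{l}^{-1}\right\Vert<\varepsilon/4$, of lengths not depending on anything below. Then for a large integer $C$ I put $c_{i}=\lfloor p_{i}C\rfloor$, assigning the small remaining deficit to $c_{1}$ so that $\sum_{i}c_{i}=C$, and let $W$ be the set of length-$C$ words in which letter $i$ occurs $c_{i}$ times. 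Every $\mathbf{v}\in W$ has the \emph{same} ratio $\rho_{\mathbf{v}}=\prod_{i}\rho_{i}^{c_{i}}=:\beta$, where $-\log\beta=C\chi+O(1)$, while $\left|W\right|=\binom{C}{c_{1},\dots,c_{n}}=e^{C(H+o(1))}$ by Stirling. Partitioning $W$ according to which $P_{l}$ contains $U_{\mathbf{v}}$, some class $W'=\left\{\mathbf{v}\in W:U_{\mathbf{v}}\in P_{l^{*}}\right\}$ satisfies $\left|W'\right|\geq\left|W\right|/N_{0}$. I then take the SS-IFS $\left\{\phi_{\mathbf{w}_{l^{*}}}\circ\phi_{\mathbf{v}}:\mathbf{v}\in W'\right\}$. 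All its maps have the common ratio $r:=\rho_{\mathbf{w}_{l^{*}}}\beta\in(0,1)$; each orthogonal part satisfies $\left\Vert U_{\mathbf{w}_{l^{*}}}U_{\mathbf{v}}-O\right\Vert\leq\left\Vert U_{\mathbf{v}}-c_{l^{*}}\right\Vert+\left\Vert U_{\mathbf{w}_{l^{*}}}-Oc_{l^{*}}^{-1}\right\Vert<\varepsilon/2$, using that right multiplication by an orthogonal map is an isometry; the underlying words $\mathbf{w}_{l^{*}}*\mathbf{v}$ are distinct and of equal length, so the cylinders $\phi_{\mathbf{w}_{l^{*}}*\mathbf{v}}(K')$ are pairwise disjoint, whence the system satisfies the SSC and its attractor $K$ lies in $K'\subseteq K_{j}$. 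By the SSC, $\dim_{H}K$ equals the similarity dimension $\log\left|W'\right|/\log(1/r)\geq\bigl(C(H+o(1))-\log N_{0}\bigr)/\bigl(C\chi+O(1)\bigr)$, which tends to $H/\chi=s'$ as $C\to\infty$ because $\log(1/\rho_{\mathbf{w}_{l^{*}}})$ is bounded; so for $C$ large this exceeds $s'-\varepsilon/2>\dim_{H}K_{j}-\varepsilon$, which finishes the proof.

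The crux I expect is the idea of fixing one letter-frequency: this is exactly what makes the ratios literally equal while still leaving $e^{C(H+o(1))}$ cylinder words to use, and the identity $H=s'\chi$ is what guarantees that this restriction costs nothing in dimension. By contrast, the naive route of conjugating a single well-chosen cylinder map to impose the prescribed ratio does produce equal ratios but loses control of whether the new attractor stays inside $K_{j}$. The remaining ingredients --- the covering-plus-pigeonhole alignment of the orthogonal parts, fed by the density supplied by Theorem~\ref{thm: MAIN}, and the verification of the SSC for distinct equal-length cylinder words --- are soft.
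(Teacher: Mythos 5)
Your proposal is correct and follows essentially the same route as the paper: apply Theorem~\ref{thm: MAIN}, restrict to words of a single letter-frequency to equalise the ratios, pigeonhole over a finite cover of $\overline{\mathcal{T}_{j,G}}$ to align the orthogonal parts, and append a correcting word. The only difference is that you count the fixed-frequency words via Stirling, whereas the paper obtains the needed lower bound through a Chebyshev-inequality argument (Lemma~\ref{lem:Cheb lem}) and itself remarks that the Stirling route gives an even sharper constant.
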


If the transformation group is finite then we can even get that $T_{i}=O$
for every $i$.

\begin{cor}
\label{cor:finite approx cor}Let $\left\{ S_{e}:e\in\mathcal{E}\right\} $
be a strongly connected GD-IFS in $\mathbb{R}^{d}$ with attractor
$\left(K_{1},\ldots,K_{q}\right)$, let $j\in\mathcal{V}$ and assume
that $\mathcal{T}_{j,G}$ is a finite group. Then for every $\varepsilon>0$
and $O\in\mathcal{T}_{j,G}$ there exist $r\in(0,1)$ and an SS-IFS
$\left\{ S_{i}\right\} _{i=1}^{m}$ that satisfies the SSC, with attractor
$K$ such that $K\subseteq K_{j}$, $\dim_{H}K_{j}-\varepsilon<\dim_{H}K$,
and $T_{i}=O$, $r_{i}=r$ for every $i\in\left\{ 1,\dots,m\right\} $.
\end{cor}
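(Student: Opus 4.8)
The plan is to deduce Corollary \ref{cor:finite approx cor} from Theorem \ref{thm:approx trafo thm} by choosing $\varepsilon$ small enough that the approximate statement $\left\Vert T_{i}-O\right\Vert <\varepsilon$ is forced to be an exact equality $T_{i}=O$. Since $\mathcal{T}_{j,G}$ is a finite group it is closed, so $\overline{\mathcal{T}_{j,G}}=\mathcal{T}_{j,G}$; in particular the given $O$ lies in $\overline{\mathcal{T}_{j,G}}$, and Theorem \ref{thm:approx trafo thm} is applicable to it.

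First I would fix the scale on which distinct elements of the group are separated. Set
\[
\delta:=\min\left\{ \left\Vert A-B\right\Vert :A,B\in\mathcal{T}_{j,G},\ A\neq B\right\}
\]
if $\left|\mathcal{T}_{j,G}\right|\geq2$, and $\delta:=1$ otherwise; since $\mathcal{T}_{j,G}$ is finite, $\delta>0$. Then I would apply Theorem \ref{thm:approx trafo thm} to the transformation $O$ with $\varepsilon':=\min\{\varepsilon,\delta\}$ in place of $\varepsilon$. This produces some $r\in(0,1)$ and an SS-IFS $\left\{ S_{i}\right\} _{i=1}^{m}$ satisfying the SSC, with attractor $K\subseteq K_{j}$, such that $\dim_{H}K_{j}-\varepsilon'<\dim_{H}K$, and $\left\Vert T_{i}-O\right\Vert <\varepsilon'\leq\delta$ and $r_{i}=r$ for every $i$. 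As $\varepsilon'\leq\varepsilon$, the required dimension bound $\dim_{H}K_{j}-\varepsilon<\dim_{H}K$ is immediate, and $r_{i}=r$ is already as wanted, so the only thing left to verify is $T_{i}=O$.

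To upgrade $\left\Vert T_{i}-O\right\Vert <\delta$ to $T_{i}=O$, I would use that the SS-IFS delivered by Theorem \ref{thm:approx trafo thm} consists of maps obtained by composing the original similarities $S_{e}$ along directed cycles based at the vertex $j$: each $S_{i}$ equals $S_{\boldsymbol{\mathbf{e}}}$ for some $\boldsymbol{\mathbf{e}}\in\mathcal{C}_{j}$, hence $T_{i}=T_{\boldsymbol{\mathbf{e}}}\in\mathcal{T}_{j,G}$. If $\left|\mathcal{T}_{j,G}\right|=1$, this already forces $T_{i}=Id_{\mathbb{R}^{d}}=O$. Otherwise $T_{i}$ and $O$ both lie in $\mathcal{T}_{j,G}$ with $\left\Vert T_{i}-O\right\Vert <\delta$, so by the definition of $\delta$ they cannot be distinct, i.e. $T_{i}=O$. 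Together with the bounds recorded above, this gives the conclusion of the corollary.

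The one point that genuinely needs care — the only place where the statement of Theorem \ref{thm:approx trafo thm} by itself is not enough — is the assertion that the orthogonal parts $T_{i}$ of the approximating SS-IFS lie in $\mathcal{T}_{j,G}$. This should be extracted from the construction in the proof of Theorem \ref{thm:approx trafo thm} (the subsystem is assembled from cylinders $S_{\boldsymbol{\mathbf{e}}}$, $\boldsymbol{\mathbf{e}}\in\mathcal{C}_{j}$, of the graph directed system), not merely from its statement; everything else is a routine consequence of taking $\varepsilon$ below the separation constant $\delta$ of the finite group $\mathcal{T}_{j,G}$.
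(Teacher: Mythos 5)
Your proposal is correct and is essentially the paper's own argument: the paper deduces the corollary from Theorem \ref{thm:approx trafo thm} precisely by taking $\varepsilon$ below $\min\left\{ \left\Vert T-O\right\Vert :T,O\in\mathcal{T}_{j,G},T\neq O\right\} >0$. You are also right that the one implicit ingredient is that the orthogonal parts $T_{i}$ of the constructed SS-IFS lie in $\mathcal{T}_{j,G}$, which holds because the construction assembles the maps as compositions $S_{\boldsymbol{\mathbf{e}}}$ along cycles $\boldsymbol{\mathbf{e}}\in\mathcal{C}_{j}$; the paper leaves this unspoken, so your explicit remark is a welcome clarification rather than a deviation.
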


Corollary \ref{cor:finite approx cor} follows easily from Theorem
\ref{thm:approx trafo thm} by letting
\[
\varepsilon=\min\left\{ \left\Vert T-O\right\Vert :T,O\in\mathcal{T}_{j,G},T\neq O\right\} >0.
\]

One cannot hope to have in Theorem \ref{thm:approx trafo thm} that
$T_{i}=O$ for every $i$ because in $\mathbb{R}^{3}$ there exist
two rotations around lines that generate a free group over two elements
and so $T_{\boldsymbol{\mathbf{i}}}$ might all be different for every
finite word $\boldsymbol{\mathbf{i}}$. However, rotations on the
plane commute, hence we can get all $T_{i}$ to be the same.

\begin{thm}
\label{thm:planar dense thm}Let $\left\{ S_{e}:e\in\mathcal{E}\right\} $
be a strongly connected GD-IFS in $\mathbb{R}^{2}$ with attractor
$\left(K_{1},\ldots,K_{q}\right)$ and let $j\in\mathcal{V}$. Then
for every $\varepsilon>0$ there exist $r\in(0,1)$, an orthogonal
transformation $O\in\mathcal{T}_{j,G}\cap\mathbb{SO}_{2}$ and an
SS-IFS $\left\{ S_{i}\right\} _{i=1}^{m}$ that satisfies the SSC,
with attractor $K$ such that $K\subseteq K_{j}$, $\dim_{H}K_{j}-\varepsilon<\dim_{H}K$,
the transformation group $\mathcal{T}$ of $\left\{ S_{i}\right\} _{i=1}^{m}$
is dense in $\mathcal{T}_{j,G}\cap\mathbb{SO}_{2}$ and $T_{i}=O$,
$r_{i}=r$ for every $i\in\left\{ 1,\dots,m\right\} $.
\end{thm}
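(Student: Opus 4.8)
\textbf{Proof plan for Theorem \ref{thm:planar dense thm}.}

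The plan is to bootstrap from Theorem \ref{thm:approx trafo thm}, exploiting the fact that $\mathbb{SO}_2$ is abelian. First I would apply Theorem \ref{thm:approx trafo thm} (with a parameter $\delta>0$ to be chosen) to get an SS-IFS $\left\{ S_i\right\}_{i=1}^m$ satisfying the SSC with attractor $K\subseteq K_j$, $\dim_H K_j-\delta<\dim_H K$, all similarity ratios equal to some $r$, and all orthogonal parts $T_i$ within $\delta$ of a single $O\in\overline{\mathcal{T}_{j,G}}$. In the plane every orthogonal map is a rotation or a reflection; by passing to the subsystem of words of even length if necessary (or by first choosing $O\in\overline{\mathcal{T}_{j,G}\cap\mathbb{SO}_2}$), I can assume all the $T_i$ and $O$ lie in $\mathbb{SO}_2$, where they are parametrised by angles $\theta_i$ close to the angle $\theta_O$ of $O$.

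Next, the key step: since rotations commute, for a word $\boldsymbol{\mathbf{i}}=(i_1,\dots,i_k)$ the composite $S_{\boldsymbol{\mathbf{i}}}$ has orthogonal part $T_{\boldsymbol{\mathbf{i}}}=R_{\theta_{i_1}+\dots+\theta_{i_k}}$ and ratio $r^k$. I want to replace the original generators by a suitable family of long compositions $\left\{ S_{\boldsymbol{\mathbf{i}}}:\boldsymbol{\mathbf{i}}\in\Lambda\right\}$ indexed by a carefully chosen antichain $\Lambda\subseteq\mathcal{I}^*$, arranged so that (a) the new system still satisfies the SSC and has dimension still within $\varepsilon$ of $\dim_H K_j$ — this is standard, as passing to iterates of an SSC system preserves the SSC and the dimension (the dimension of the iterated system equals that of $K$), so I only need to control which words I keep; (b) all the chosen words have angle-sums landing in a common residue, so that after a single global rotation conjugation the orthogonal parts all become a prescribed $O'$; and (c) the retained words generate (in their angle-sums) a dense subset of the angle group of $\overline{\mathcal{T}_{j,G}\cap\mathbb{SO}_2}$. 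Concretely: if $\overline{\mathcal{T}_{j,G}\cap\mathbb{SO}_2}$ is finite cyclic of order $n$, I choose words whose angle-sums realise every element of that group while keeping them all of a fixed common length $k$ (padding shorter words with extra letters is not available, but choosing $k$ large and using that the set of length-$k$ angle-sums, for $k$ large, fills out the whole group since a single generator already moves by $\theta_{i}$ which generates a dense or full subgroup) — here I would argue that for the full-rank choice the available angle-sums at a fixed large level $k$ already cover $\mathbb{Z}/n$. If the group is infinite (dense in $\mathbb{SO}_2$), then some $\theta_i/\pi$ is irrational and the angle-sums $k\theta_i+(\text{stuff})$ are dense; I pick a finite subfamily of words whose angle-sums are $\varepsilon$-dense in the circle, all of common length, all congruent modulo nothing but instead I conjugate out their common rotation by... — the cleanest route is: fix one word $\boldsymbol{\mathbf{i}}_0\in\Lambda$, conjugate the whole new IFS by the rotation $R$ with $R T_{\boldsymbol{\mathbf{i}}_0}^{-1}=O$, which replaces every $T_{\boldsymbol{\mathbf{i}}}$ by $R T_{\boldsymbol{\mathbf{i}}} R^{-1}=R T_{\boldsymbol{\mathbf{i}}}$; this does not by itself make them all equal.

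So the honest key step is different and I would carry it out as follows. Work at a single fixed level $k$ and only use words $\boldsymbol{\mathbf{i}}$ of length $k$. Their orthogonal parts are $R_{k\overline{\theta}+\text{(deviation)}}$ where each deviation is a sum of $k$ terms each of size $<\delta$; but more usefully, I instead \emph{further iterate selectively} to synchronise. The trick (as in Peres--Shmerkin and Farkas) is: among all level-$N$ words for large $N$, the angle-sums take many values; group the words by angle-sum into buckets; by a pigeonhole/counting argument one bucket $B$ (a single angle value $\alpha$) still carries enough words that the sub-IFS $\left\{ S_{\boldsymbol{\mathbf{i}}}:\boldsymbol{\mathbf{i}}\in B\right\}$ has dimension close to $\dim_H K$ — this is where I need that restricting to a fixed angle costs only a bounded factor in the number of words, hence a negligible loss $o(1)$ in dimension as $N\to\infty$, which follows because there are only polynomially many (in $N$) possible angle values when the group is finite, or because one can preselect the deviations to be tiny when it is infinite. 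All words in $B$ now have \emph{identical} orthogonal part $R_\alpha=:O''$ and identical ratio $r^N$; conjugating by a fixed rotation replaces $O''$ by any prescribed $O\in\mathcal{T}_{j,G}\cap\mathbb{SO}_2$ (adjusting $\alpha$ into the group by absorbing a further generator if needed). Finally, to also get the transformation-group-dense conclusion I would, before synchronising, ensure the retained generators include words realising angle-sums dense in the target group — then after synchronisation the \emph{group generated by the new $T_i$} is trivial, which contradicts the density requirement unless the target group is trivial.

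\textbf{Main obstacle.} The genuine tension in the statement is between ``$T_i=O$ for every $i$'' (which forces $\mathcal{T}=\{O,O^2,\dots\}$, a \emph{cyclic} group generated by one element) and ``$\mathcal{T}$ dense in $\mathcal{T}_{j,G}\cap\mathbb{SO}_2$''. These are compatible precisely because $\mathbb{SO}_2$ is abelian and a single rotation by an irrational (resp.\ suitable rational) angle already generates a dense (resp.\ the whole) subgroup of the circle. So the crux of the proof is: when synchronising all orthogonal parts to a common value $O$, I must choose $O$ itself to be a rotation whose powers are dense in $\overline{\mathcal{T}_{j,G}\cap\mathbb{SO}_2}$ (possible since that closed group is either finite cyclic, generated by a rotation of angle $2\pi/n$, or all of $\mathbb{SO}_2$, topologically generated by a single irrational rotation), and realise such an $O$ as the common orthogonal part of a dimension-efficient sub-IFS by the bucket/pigeonhole argument above followed by a conjugation. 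Making the bucketing lose only $o(1)$ dimension — i.e.\ controlling the number of distinct angle-sums at level $N$, or equivalently pre-arranging via Theorem \ref{thm:approx trafo thm} that all deviations are small enough that angle-sums cluster — is the step I expect to require the most care, and I would lean on the techniques already invoked for the proof of Theorem \ref{thm:approx trafo thm}.
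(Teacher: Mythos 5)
Your overall strategy is the right one and is essentially the paper's: exploit commutativity of $\mathbb{SO}_{2}$ so that the orthogonal part of a long word depends only on the multiset of its letters, then use the counting argument (Lemma \ref{lem:Cheb lem}) to find a single multiset class at level $k$ carrying enough words that the induced sub-IFS loses only $o(1)$ in dimension. The ``bucketing'' you worry about costs nothing beyond what is already in the proof of Theorem \ref{thm:approx trafo thm}: the class $\mathcal{J}_{0}$ produced by Lemma \ref{lem:Cheb lem} \emph{is} a single bucket, since all of its words automatically share one orthogonal part $T$ and one ratio $\rho$. Likewise the finite case is immediate from Corollary \ref{cor:finite approx cor} with $O$ a generator of the finite cyclic group $\mathcal{T}_{j,G}\cap\mathbb{SO}_{2}$, and reflections are removed in the paper by post-composing every orientation-reversing word with a fixed map and invoking Lemma \ref{lem:r_1 lem} (your ``words of even length'' does not achieve this, though your alternative of taking $O\in\mathbb{SO}_{2}$ and a tolerance smaller than $2$ in Theorem \ref{thm:approx trafo thm} would, since every planar rotation is at operator distance $2$ from every reflection).

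There are two genuine problems. First, conjugation is a dead end twice over: conjugating a rotation by a rotation changes nothing because $\mathbb{SO}_{2}$ is abelian, and conjugating the IFS by any isometry replaces the attractor $K$ by its image, destroying $K\subseteq K_{j}$. The only legitimate move is the one you mention only parenthetically, namely post-composing every map with one further fixed word of the system. Second, and more seriously, your starting point undercuts the density conclusion. Theorem \ref{thm:approx trafo thm} gives $T_{i}$ close to $O$ but says nothing about the transformation group, so all the $T_{i}$ could be rotations of finite order; the synchronised common rotation would then have finite order and $\left\langle O\right\rangle $ could not be dense in $\mathcal{T}_{j,G}\cap\mathbb{SO}_{2}$ when that group is infinite. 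The paper instead starts from Theorem \ref{thm: MAIN}, whose density conclusion guarantees (when $\left|\mathcal{T}_{j,G}\right|=\infty$) a word $\boldsymbol{\mathbf{j}}$ with $T_{\boldsymbol{\mathbf{j}}}$ of infinite order; then if the common rotation $T$ of the class $\mathcal{J}_{0}$ has finite order, $T\circ T_{\boldsymbol{\mathbf{j}}}$ has infinite order, so one of $\left\{ \widehat{S}_{\boldsymbol{\mathbf{i}}}:\boldsymbol{\mathbf{i}}\in\mathcal{J}_{0}\right\} $ and $\left\{ \widehat{S}_{\boldsymbol{\mathbf{i}}}\circ\widehat{S}_{\boldsymbol{\mathbf{j}}}:\boldsymbol{\mathbf{i}}\in\mathcal{J}_{0}\right\} $ works. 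You need this (or an equivalent) device to close the gap you correctly identify as the crux.
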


\subsection{Application of the approximation theorems\label{subsec:Application GDA}}

In this section we give applications of the dimension approximation
results. The first application generalises a result of Hochman and
Shmerkin \cite[Corollary 1.7]{Hochman-Schmerkin-local entropy} on
self-similar sets with SSC to graph directed attractors with no separation
condition.

\selectlanguage{english}%
Let $0<l\leq d$ be integers and let $G_{d,l}$ denote the \textit{Grassmann
manifold} of $l$-dimensional linear subspaces of $\mathbb{R}^{d}$
equipped with the usual topology (see for example \cite[Section 3.9]{Mattilakonyv}).

\selectlanguage{british}%

\begin{thm}
\label{thm:hoch-shmer}Let $\left\{ S_{e}:e\in\mathcal{E}\right\} $
be a strongly connected GD-IFS in $\mathbb{R}^{d}$ with attractor
$\left(K_{1},\ldots,K_{q}\right)$, let $j\in\mathcal{V}$\foreignlanguage{english}{,
let $U$ be an open neighbourhood of $K_{j}$ and assume that there
exists $M\in G_{d,l}$ such that the set $\left\{ O(M):O\in\mathcal{T}_{j,G}\right\} $
is dense in $G_{d,l}$ for some $1\leq l<d$. Then $\dim_{H}\left(g(K_{j})\right)=\min\left\{ \dim_{H}(K_{j}),l\right\} $
for every continuously differentiable map $g:U\longrightarrow\mathbb{R}^{l}$
such that $\mathrm{rank}(g'(x))=l$ for some $x\in K_{j}$.}
\end{thm}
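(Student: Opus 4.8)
The plan is to deduce Theorem \ref{thm:hoch-shmer} from Theorem \ref{thm: MAIN} (the main approximation theorem) together with the self-similar version of the statement, namely \cite[Corollary 1.7]{Hochman-Schmerkin-local entropy}. First I would fix $\varepsilon > 0$ and apply Theorem \ref{thm: MAIN} to obtain an SS-IFS $\left\{ S_i \right\}_{i=1}^m$ satisfying the SSC, with attractor $K \subseteq K_j$, with $\dim_H K_j - \varepsilon < \dim_H K$, and whose transformation group $\mathcal{T}$ is dense in $\mathcal{T}_{j,G}$. The key point is that density of $\mathcal{T}$ in $\mathcal{T}_{j,G}$ upgrades the hypothesis: since $\left\{ O(M) : O \in \mathcal{T}_{j,G} \right\}$ is dense in $G_{d,l}$ and $O \mapsto O(M)$ is continuous, the set $\left\{ O(M) : O \in \mathcal{T} \right\}$ (with $\mathcal{T}$ dense in $\mathcal{T}_{j,G}$, hence $\overline{\mathcal{T}} \supseteq \mathcal{T}_{j,G}$) is also dense in $G_{d,l}$. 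Thus the self-similar set $K$ satisfies exactly the hypotheses of the Hochman--Shmerkin result for $C^1$ images.

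Next I would handle the $C^1$ map $g : U \longrightarrow \mathbb{R}^l$. Since $K \subseteq K_j \subseteq U$, the restriction $g|_K$ is a $C^1$ map defined on a neighbourhood of $K$, so \cite[Corollary 1.7]{Hochman-Schmerkin-local entropy} applies to $K$ provided the rank hypothesis is met somewhere on $K$ — but here is a subtlety: we are told $\mathrm{rank}(g'(x)) = l$ for some $x \in K_j$, not necessarily for a point of the much smaller set $K$. To get around this, before applying Theorem \ref{thm: MAIN} I would first replace $K_j$ by a small sub-copy: by the graph directed structure and strong connectedness, for any $\boldsymbol{\mathbf{e}} \in \mathcal{C}_j$ the set $S_{\boldsymbol{\mathbf{e}}}(K_j) \subseteq K_j$ is a scaled isometric copy of $K_j$, and we may choose a cylinder $K_{\boldsymbol{\mathbf{e}}}$ lying in a small neighbourhood of the point $x$ where $g$ has full rank. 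On such a small neighbourhood $g$ still has rank $l$ throughout (rank is lower semicontinuous and $l$ is maximal), and $g$ restricted there is $C^1$ with full rank at every point; moreover $K_{\boldsymbol{\mathbf{e}}}$ is itself the attractor of a strongly connected GD-IFS (conjugate the original one by $S_{\boldsymbol{\mathbf{e}}}$) with the same transformation group up to conjugacy, so the Grassmannian density hypothesis is preserved. Then apply Theorem \ref{thm: MAIN} inside $K_{\boldsymbol{\mathbf{e}}}$ to get $K \subseteq K_{\boldsymbol{\mathbf{e}}}$ with $\dim_H K > \dim_H K_j - \varepsilon$ (note $\dim_H K_{\boldsymbol{\mathbf{e}}} = \dim_H K_j$) and $\mathcal{T}$ dense in $\mathcal{T}_{j,G}$ up to conjugacy. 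Now $g$ has full rank at every point of $K$, and in particular at some point of $K$.

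Applying \cite[Corollary 1.7]{Hochman-Schmerkin-local entropy} to $K$ gives $\dim_H(g(K)) = \min\left\{ \dim_H K, l \right\}$. Since $g(K) \subseteq g(K_j)$, monotonicity of Hausdorff dimension yields $\dim_H(g(K_j)) \geq \min\left\{ \dim_H K, l \right\} \geq \min\left\{ \dim_H K_j - \varepsilon, l \right\}$, and letting $\varepsilon \to 0$ gives $\dim_H(g(K_j)) \geq \min\left\{ \dim_H K_j, l \right\}$. The reverse inequality $\dim_H(g(K_j)) \leq \min\left\{ \dim_H K_j, l \right\}$ is routine: $g$ is locally Lipschitz on a neighbourhood of the compact set $K_j$, so it cannot increase Hausdorff dimension, giving $\dim_H(g(K_j)) \leq \dim_H K_j$; and $g(K_j) \subseteq \mathbb{R}^l$ forces $\dim_H(g(K_j)) \leq l$. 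Combining the two directions finishes the proof.

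The main obstacle I anticipate is the rank-hypothesis localisation in the second paragraph — making sure that passing to a cylinder $K_{\boldsymbol{\mathbf{e}}}$ both concentrates near the good point $x$ and preserves the strong connectedness and the Grassmannian density of the transformation group simultaneously. Everything else (the density upgrade, the monotonicity bounds, the Lipschitz upper bound) is soft. One should double-check that the conjugated GD-IFS defining $K_{\boldsymbol{\mathbf{e}}}$ genuinely has the required Grassmannian density property; this should follow because conjugating all similarities by the fixed similarity $S_{\boldsymbol{\mathbf{e}}}$ conjugates the transformation group $\mathcal{T}_{j,G}$ by the orthogonal part $T_{\boldsymbol{\mathbf{e}}}$, and $O \mapsto T_{\boldsymbol{\mathbf{e}}} O T_{\boldsymbol{\mathbf{e}}}^{-1}$ carries a dense-orbit subspace to a dense-orbit subspace (namely $T_{\boldsymbol{\mathbf{e}}}(M)$).
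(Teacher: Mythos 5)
Your proposal is correct and follows essentially the same route as the paper: localise to a small cylinder $K_{\boldsymbol{\mathbf{e}}}$ with $\boldsymbol{\mathbf{e}}\in\mathcal{C}_{j}$ near the full-rank point (the paper invokes Lemma \ref{lem:dense piece lem} for this), apply Theorem \ref{thm: MAIN} there, note that density of $\mathcal{T}$ in $\mathcal{T}_{j,G}$ preserves the Grassmannian density hypothesis, and conclude via \cite[Corollary 1.7]{Hochman-Schmerkin-local entropy} together with the easy Lipschitz upper bound. The subtlety you flag about the rank hypothesis holding only at one point of $K_{j}$ is exactly the one the paper addresses, and your conjugation argument for the cylinder is sound.
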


Since \foreignlanguage{english}{$\mathrm{rank}(g'(x))=l$ it follows
that there exists an open neighbourhood $V$ of $x$ such that $\mathrm{rank}(g'(y))=l$
for every $y\in V$.} Because $g$ is a Lipschitz map $\dim_{H}\left(g(K_{j})\right)\leq\min\left\{ \dim_{H}(K_{j}),l\right\} $
is straightforward. Taking a small cylinder set inside $V$ we can
further assume that $K_{j}\subseteq V$ (see Lemma \ref{lem:dense piece lem}).
The opposite inequality follows by finding $K\subseteq K_{j}$ as
in Theorem \ref{thm: MAIN}. Applying \cite[Corollary 1.7]{Hochman-Schmerkin-local entropy}
to $K$ finishes the proof of Theorem \ref{thm:hoch-shmer}.

The following corollary applies to $g:U\longrightarrow\mathbb{R}^{d_{2}}$
where the dimension $d_{2}$ of the ambient space of the image can
be greater than $l$.

\begin{cor}
\label{cor:hoch-shmer cor}Let $\left\{ S_{e}:e\in\mathcal{E}\right\} $
be a strongly connected GD-IFS in $\mathbb{R}^{d}$ with attractor
$\left(K_{1},\ldots,K_{q}\right)$, let $j\in\mathcal{V}$\foreignlanguage{english}{,
let $U$ be an open neighbourhood of $K_{j}$ and assume that there
exists $M\in G_{d,l}$ such that the set $\left\{ O(M):O\in\mathcal{T}_{j,G}\right\} $
is dense in $G_{d,l}$ for some $1\leq l<d$. If $g:U\longrightarrow\mathbb{R}^{d_{2}}$
is a continuously differentiable map ($d_{2}\in\mathbb{N}$) such
that $\mathrm{rank}(g'(x))=l$ for every $x\in K_{j}$ and either
of the following conditions is satisfied}

\selectlanguage{english}%
(i) $g\in C^{\infty}$,

(ii) $\dim_{H}(K_{j})\leq l$,

\noindent then $\dim_{H}\left(g(K_{j})\right)=\min\left\{ \dim_{H}(K_{j}),l\right\} $.
\end{cor}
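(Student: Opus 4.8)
The plan is to reduce Corollary \ref{cor:hoch-shmer cor} to Theorem \ref{thm:hoch-shmer} by replacing the target $\mathbb{R}^{d_2}$ with $\mathbb{R}^{l}$ in a way that does not decrease the dimension of the image. Since $g$ is continuously differentiable on $U$ and $\mathrm{rank}(g'(x))=l$ for every $x\in K_j$, the image $g(K_j)$ locally lies on $C^1$ (or $C^\infty$, under (i)) $l$-dimensional submanifolds of $\mathbb{R}^{d_2}$; the idea is to compose $g$ with a suitable projection $\pi:\mathbb{R}^{d_2}\to\mathbb{R}^{l}$ so that $\pi\circ g$ still has rank $l$ on a piece of $K_j$ of almost full dimension, and then quote Theorem \ref{thm:hoch-shmer} for $\pi\circ g$. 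First I would use the compactness of $K_j$ together with continuity of $g'$ to cover $K_j$ by finitely many open sets $V_1,\dots,V_N\subseteq U$ on each of which $\mathrm{rank}(g'(y))=l$ and $g$ restricted to $V_k$ is a $C^1$ diffeomorphism onto an $l$-dimensional embedded submanifold. On such a piece, by choosing coordinates one finds a fixed orthogonal projection $\pi_k:\mathbb{R}^{d_2}\to\mathbb{R}^{l}$ onto $l$ of the coordinate axes for which $\pi_k\circ g|_{V_k}$ is still an immersion (indeed a local diffeomorphism onto its image in $\mathbb{R}^{l}$), because the $d_2\times d_2$ Jacobian has rank $l$ so some $l\times l$ minor of $g'$ is nonzero on a smaller neighbourhood.

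Next, using Lemma \ref{lem:dense piece lem} (the existence of a small cylinder set inside any open neighbourhood, with the $j$-th transformation group and hence the density of $\{O(M):O\in\mathcal{T}_{j,G}\}$ preserved, and with Hausdorff dimension equal to that of $K_j$), I would pass to a cylinder set $K_{\mathbf{e}}\subseteq K_j$ contained in one of the $V_k$. This replaces the original GD-IFS by a strongly connected GD-IFS whose $j$-th transformation group still acts densely on $G_{d,l}$ through some $M$, whose attractor's $j$-th coordinate is (a similar copy of, hence has the same dimension as) $K_j$, and which sits inside $V_k$, so that $\pi_k\circ g$ has rank $l$ everywhere on it and, in case (i), $\pi_k\circ g\in C^\infty$. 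Theorem \ref{thm:hoch-shmer} then applies to the map $\pi_k\circ g$ and this new cylinder attractor, giving
\[
\dim_H\bigl((\pi_k\circ g)(K_{\mathbf{e}})\bigr)=\min\bigl\{\dim_H(K_{\mathbf{e}}),l\bigr\}=\min\bigl\{\dim_H(K_j),l\bigr\}.
\]
Since $\pi_k$ is Lipschitz, $\dim_H(g(K_j))\ge\dim_H\bigl(g(K_{\mathbf{e}})\bigr)\ge\dim_H\bigl((\pi_k\circ g)(K_{\mathbf{e}})\bigr)=\min\{\dim_H(K_j),l\}$; the reverse inequality is immediate because $g$ is locally Lipschitz on the compact set $K_j$, so $\dim_H(g(K_j))\le\min\{\dim_H(K_j),l\}$ (for the bound by $l$ one again uses that locally $g(K_j)$ lies in an $l$-dimensional submanifold, or simply covers $K_j$ by finitely many pieces on which $g$ is Lipschitz into an $l$-plane after an isometry).

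The main obstacle is organising the choice of the projection $\pi_k$ and, especially, arranging that a \emph{single} cylinder set lands inside one neighbourhood $V_k$ on which a \emph{fixed} projection works — the rank condition is pointwise but the minor of $g'$ that is nonzero may vary from point to point, so the finite cover argument combined with Lemma \ref{lem:dense piece lem} is what makes everything uniform. In case (ii), $\dim_H(K_j)\le l$, the situation is easier: one wants $\dim_H(g(K_j))=\dim_H(K_j)$, which follows once $\pi_k\circ g$ is bi-Lipschitz on a piece of almost full dimension (an immersion that is injective on a suitably small cylinder is bi-Lipschitz there), so Theorem \ref{thm:hoch-shmer} is not even strictly needed, although invoking it uniformly via the same reduction is cleanest. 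A secondary technical point is checking that Lemma \ref{lem:dense piece lem} indeed preserves the density hypothesis ``$\{O(M):O\in\mathcal{T}_{j,G}\}$ dense in $G_{d,l}$''; this should be immediate since passing to a cylinder corresponds to conjugating/translating within the same transformation group, leaving $\mathcal{T}_{j,G}$ (up to the relevant identification) unchanged.
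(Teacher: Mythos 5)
Your lower bound is essentially the intended argument, but it is over-engineered: Theorem \ref{thm:hoch-shmer} only requires $\mathrm{rank}(g'(x))=l$ at \emph{some} point of $K_j$, so there is no need to cover $K_j$ by finitely many $V_k$ or to pass to a cylinder set on which a single projection works uniformly. It suffices to fix one $x\in K_j$, choose $l$ coordinates of $\mathbb{R}^{d_2}$ for which the corresponding $l\times d$ submatrix of $g'(x)$ has rank $l$, and apply Theorem \ref{thm:hoch-shmer} directly to $\pi\circ g:U\to\mathbb{R}^{l}$ and the original attractor $K_j$; then $\dim_H(g(K_j))\ge\dim_H(\pi(g(K_j)))=\min\{\dim_H(K_j),l\}$. (Also, $g|_{V_k}$ cannot be a ``$C^1$ diffeomorphism onto an $l$-dimensional submanifold'' when $l<d$; at best the image is contained in one.)

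The genuine gap is in the upper bound, where you never use hypotheses (i) or (ii) — a warning sign, since they are exactly what that inequality needs. You assert that $\dim_H(g(K_j))\le l$ because ``locally $g(K_j)$ lies in an $l$-dimensional submanifold.'' This does not follow from $\mathrm{rank}(g'(x))=l$ for $x\in K_j$ when $g$ is merely $C^1$ and $d_2>l$: rank is only lower semicontinuous, so arbitrarily close to $K_j$ the rank may exceed $l$, the constant rank theorem is unavailable, and $g$ need not factor locally through an $l$-plane. Worse, for $C^1$ maps the Sard/Federer-type bounds on images of sets where the differential has rank at most $l$ genuinely fail (this is the Whitney phenomenon of a $C^1$ function non-constant on a connected critical set), so the claim cannot be repaired by a covering argument alone. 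This is precisely where (i) and (ii) enter: under (ii), $\dim_H(g(K_j))\le\dim_H(K_j)\le l$ already follows from the local Lipschitz bound; under (i), one invokes the Federer--Sard theorem for $C^{\infty}$ maps to get $\dim_H\bigl(g(\{x:\mathrm{rank}(g'(x))\le l\})\bigr)\le l$. (The paper itself only points to the analogous deduction of \cite[Corollary 1.7]{Farkas-Linim-only} from \cite[Theorem 1.6]{Farkas-Linim-only}, which proceeds exactly along these lines.)
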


Corollary \ref{cor:hoch-shmer cor} can be deduced from Theorem \ref{thm:hoch-shmer}
as \cite[Corollary 1.7]{Farkas-Linim-only} is deduced from \cite[Theorem 1.6]{Farkas-Linim-only}.

Another well-studied topic is Falconer's distance set conjecture.
For a set $K$ we denote the \textsl{distance set of $K$} by $D(K)=\left\{ \left\Vert x-y\right\Vert :x,y\in K\right\} $.
The conjecture is the following (see \cite{Falconer distance set}):

\begin{conjecture}
Let $K\subseteq\mathbb{R}^{d}$ be an analytic set. If $\dim_{H}K\geq\frac{d}{2}$
then $\dim_{H}D(K)=1$, if $\dim_{H}K>\frac{d}{2}$ then $\mathcal{L}^{1}\left(D(K)\right)>0$.
\end{conjecture}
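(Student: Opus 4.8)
The plan is to reduce the set-theoretic statement to a measure-theoretic one and then to the Fourier analysis of a distance measure. Since $K$ is analytic, for every $t<\dim_{H}K$ the Frostman lemma (in the form valid for analytic sets) furnishes a compactly supported probability measure $\mu$ on $K$ of finite $t$-energy, $I_{t}(\mu)=\iint\|x-y\|^{-t}\,d\mu(x)\,d\mu(y)<\infty$; this is the only place the analyticity of $K$ enters. Let $\nu$ be the image of $\mu\times\mu$ under $(x,y)\mapsto\|x-y\|$, a measure carried by $D(K)$. First I would invoke Mattila's reduction: the regularity of $\nu$ is governed by the spherical averages $\sigma_{\mu}(r)=\int_{S^{d-1}}|\widehat{\mu}(r\omega)|^{2}\,d\omega$, and sufficiently fast decay of $\sigma_{\mu}$ forces $\nu$ to be absolutely continuous with an $L^{2}$ density, whence $\mathcal{L}^{1}(D(K))>0$. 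Because $D(K)\subseteq[0,\infty)$ the bound $\dim_{H}D(K)\leq1$ is automatic, so for the first assertion it suffices to exhibit, for a suitable choice of pin, a measure on $D(K)$ of full dimension $1$.

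Next I would quantify the decay of the spherical averages through the $L^{2}$ restriction/extension theory of the sphere $S^{d-1}$: a measure of finite $t$-energy obeys $\sigma_{\mu}(r)\lesssim r^{-\beta(t)}$, with $\beta(t)$ the sharp spherical-average exponent. For the positive-measure conclusion one only needs $\beta(t)$ large enough to make Mattila's criterion converge for some $t>d/2$, which the known decoupling and $L^{2}$ restriction estimates deliver comfortably above the threshold. The delicate endpoint is the full-dimension conclusion at $t\nearrow d/2$, where the $L^{2}$ average is too lossy. There I would pass to pinned distance sets $D_{y}(K)=\{\|x-y\|:x\in K\}$, each of which is the image of $K$ under the radial projection centred at $y$, and aim to show $\dim_{H}D_{y}(K)=1$ for $\mu$-almost every $y$; this at once yields $\dim_{H}D(K)=1$.

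For the pinned problem I would run the Guth--Iosevich--Ou--Wang scheme: bound the $1$-energy of the pinned distance measure $\nu_{y}=(x\mapsto\|x-y\|)_{*}\mu$ by a weighted count of incidences between $\mu$ and the family of spheres centred on $\mathrm{supp}\,\mu$, integrate in the pin variable $y$ against $\mu$, and estimate the resulting bilinear expression by a refined $L^{2}$ restriction (or decoupling) bound for $S^{d-1}$ split through a broad/narrow wave-packet decomposition of the extension operator. The main obstacle is driving this spherical estimate all the way down to the critical exponent $\dim_{H}K=d/2$: the available restriction and decoupling inputs yield the conclusion only above a threshold strictly larger than $d/2$ (for instance $5/4$ in the plane), and it is the closing of this gap to the endpoint that concentrates the entire difficulty. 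Since the argument feeds only on the Frostman measure $\mu$ and on harmonic-analytic estimates for the sphere, it applies verbatim to an arbitrary analytic $K\subseteq\mathbb{R}^{d}$ rather than to any restricted class of sets.
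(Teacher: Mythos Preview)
The statement you are attempting is Falconer's distance set \emph{conjecture}, and the paper does not prove it; it is recorded there precisely as an open problem, after which the paper establishes only partial results for the special class of graph directed attractors. So there is no ``paper's own proof'' to compare against.

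Your proposal is not a proof either, and you say so yourself. The outline you give is an accurate summary of the Mattila integral approach together with the pinned-distance refinement of Guth--Iosevich--Ou--Wang, but you explicitly concede that the spherical-average/decoupling input only reaches a threshold strictly above $d/2$ (e.g.\ $5/4$ in the plane), and that ``the closing of this gap to the endpoint \ldots\ concentrates the entire difficulty.'' That is exactly the point: the conjecture is open, and no combination of Frostman measures, Mattila's $L^{2}$ criterion, and the currently available restriction or decoupling estimates is known to push the threshold down to $d/2$. Presenting the known machinery and then naming the unresolved step is a fair description of the state of the art, but it is not a proof of the conjecture; the ``plan'' terminates precisely where the actual mathematical content would have to begin.
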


Orponen \cite[Theorem 1.2]{Orponen distance set} showed that for
a planar self-similar set $K$ if $\mathcal{H}^{1}\left(K\right)>0$
then $\dim_{H}D(K)=1$. B\'ar\'any \cite[Corollary 2.8]{Balazs Barany R^3}
extended this result by showing that if $K$ is a self-similar set
in $\mathbb{R}^{2}$ and $\dim_{H}K\geq1$ then $\dim_{H}D(K)=1$.
B\'ar\'any \cite[Theorem 1.2]{Balazs Barany R^3} also showed that
if $K\subseteq\mathbb{R}^{3}$, every $T_{i}=Id_{\mathbb{R}^{3}}$
in the SS-IFS of $K$ and $\dim_{H}K>1$ then $\dim_{H}D(K)=1$. Using
our approximation theorems we deduce these results for graph directed
attractors.

We define the \textsl{pinned distance set of $K\subseteq\mathbb{R}^{d}$}
to be $D_{x}(K)=\left\{ \left\Vert x-y\right\Vert :y\in K\right\} $
for the pin $x\in\mathbb{R}^{d}$. Clearly $\dim_{H}D(K)\geq\dim_{H}D_{x}(K)$
for every set $K\subseteq\mathbb{R}^{d}$ with $x\in K$. For a fixed
$x\in\mathbb{R}^{d}$ the map $D_{x}(y)=\left\Vert x-y\right\Vert $
is a locally Lipschitz map. Hence $\dim_{H}D_{x}(K)\leq\min\left\{ \dim_{H}(K_{j}),1\right\} $
for every set $K\subseteq\mathbb{R}^{d}$.

\begin{thm}
\label{thm:distance set R2}Let $\left\{ S_{e}:e\in\mathcal{E}\right\} $
be a strongly connected GD-IFS in $\mathbb{R}^{2}$ with attractor
$\left(K_{1},\ldots,K_{q}\right)$ and let $j\in\mathcal{V}$. Then
$\dim_{H}D_{x}(K_{j})=\min\left\{ \dim_{H}(K_{j}),1\right\} $ for
every $x\in\mathbb{R}^{2}$. In particular, if $\dim_{H}K_{j}\geq1$
then $\dim_{H}D(K_{j})=\dim_{H}D_{x}(K_{j})=1$ for every $x\in K_{j}$.
\end{thm}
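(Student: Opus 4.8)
The plan is to reduce Theorem \ref{thm:distance set R2} to the known self-similar case via the approximation theorem, exactly along the lines sketched for Theorem \ref{thm:hoch-shmer}. First I would dispose of the upper bound: since $D_x(y)=\left\Vert x-y\right\Vert$ is locally Lipschitz, $\dim_{H}D_x(K_j)\le\min\{\dim_H K_j,1\}$ holds automatically for any $x\in\mathbb{R}^2$, so the content is the lower bound $\dim_{H}D_x(K_j)\ge\min\{\dim_H K_j,1\}$. Fix $\varepsilon>0$. The goal is to produce a self-similar set $K\subseteq K_j$ satisfying the SSC with $\dim_H K>\dim_H K_j-\varepsilon$ and with the property that B\'ar\'any's result \cite[Corollary 2.8]{Balazs Barany R^3} applies to $K$, so that $\dim_H D(K)$, and ideally $\dim_H D_x(K)$, is large.

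The subtlety, compared to Theorem \ref{thm:hoch-shmer}, is that B\'ar\'any's planar distance-set theorem requires either $\mathcal{H}^1(K)>0$ (in the $\dim_H K=1$ critical case of Orponen) or $\dim_H K\ge 1$; and moreover the statement here is about the \emph{pinned} distance set $D_x$ with an \emph{arbitrary} pin $x\in\mathbb{R}^2$, not necessarily in $K_j$. So I would split into cases. If $\dim_H K_j<1$, apply Theorem \ref{thm: MAIN} to get $K\subseteq K_j$ self-similar with SSC and $\dim_H K>\dim_H K_j-\varepsilon$; one then needs that a self-similar set of dimension $s<1$ in the plane has $\dim_H D_x(K)=s$ for every pin $x$. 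This should follow from the same circle of ideas (e.g. from the projection/smooth-image results already obtained in this section, since $y\mapsto\left\Vert x-y\right\Vert$ restricted to a small cylinder avoiding $x$ is a smooth map of rank $1$, and $\{O(M):O\in\mathcal T\}$ dense in $G_{2,1}$ when $\mathcal T$ is infinite, or one handles the finite-$\mathcal T$ case separately). If $\dim_H K_j\ge1$, I would instead want a self-similar $K\subseteq K_j$ with $\dim_H K\ge 1$ (take $\varepsilon$ small, or better, arrange $\dim_H K$ arbitrarily close to $\dim_H K_j$ from below, which suffices once we know the answer is $1$) and apply B\'ar\'any's $\dim_H K\ge1\Rightarrow\dim_H D(K)=1$. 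To upgrade from $D$ to the pinned $D_x$: Orponen's and B\'ar\'any's arguments actually give the pinned statement for pins in $K$; for a general pin $x$, use that $\left\Vert x-y\right\Vert$ near a cylinder set not containing $x$ is a smooth rank-one map, and invoke Theorem \ref{thm:hoch-shmer} with $l=1$ — provided $\{O(M):O\in\mathcal{T}_{j,G}\}$ is dense in $G_{2,1}$.

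That last proviso is the main obstacle: if $\mathcal{T}_{j,G}$ is finite, the orbit of a line is finite and Theorem \ref{thm:hoch-shmer} does not apply. So I would handle the finite-transformation-group case by hand. When $\mathcal{T}_{j,G}$ is finite, by Corollary \ref{cor:finite approx cor} I can find $K\subseteq K_j$ with all $T_i=O$ a single fixed rotation and all $r_i=r$; after replacing the IFS by its second iterate I may assume $O$ has a square root in the group or simply conjugate so that, up to a global isometry, $K$ is a \emph{self-homothetic} set (all maps homotheties, i.e. $T_i=Id$, possibly after passing to $\{S_i\circ S_j\}$ to kill a rotation of finite order — one has to check the order divides into something usable; alternatively absorb the rotation into the ambient coordinates since a single rotation is an isometry of $\mathbb{R}^2$). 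For a self-homothetic planar set with SSC and $\dim_H K\ge1$ one can apply the $\mathbb{R}^3$-type homothety result \cite[Theorem 1.2]{Balazs Barany R^3} embedded in the plane, or more directly Orponen's original argument, which only needs the orthogonal parts to be trivial. For the pin $x$ in this case: restrict to a cylinder $S_{\mathbf i}(K)$ not containing $x$; then $D_x\circ S_{\mathbf i}$ is a smooth map and $D_x(S_{\mathbf i}(K))$ has dimension $\min\{\dim_H K,1\}$ by the smooth-image results applicable to self-homothetic sets (projections of self-homothetic sets are governed by \cite[Corollary 1.7]{Hochman-Schmerkin-local entropy}-type statements together with the observation that the gradient of $D_x$ is nonconstant along $S_{\mathbf i}(K)$, hence the map is genuinely curved and not a single linear projection). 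Assembling: in every case $\dim_H D_x(K_j)\ge\dim_H D_x(K)\ge \dim_H K-\varepsilon'>\dim_H K_j-\varepsilon$ (and $=1$ when $\dim_H K_j\ge1$), and letting $\varepsilon\to0$ gives $\dim_H D_x(K_j)\ge\min\{\dim_H K_j,1\}$, completing the proof; the final sentence of the theorem is then immediate from $\dim_H D(K_j)\ge\dim_H D_x(K_j)$ for $x\in K_j$ together with the universal Lipschitz upper bound $\dim_H D(K_j)\le1$.
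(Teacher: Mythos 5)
Your overall strategy --- approximate $K_j$ from inside by a self-similar set $K$ satisfying the SSC via Theorem \ref{thm: MAIN}, pass to a cylinder set of $K$ avoiding the pin $x$, and then invoke a planar distance-set/nonlinear-projection result --- is exactly the paper's strategy, and your treatment of the upper bound and of the final sentence of the theorem is fine. The difference is in the final ingredient, and this is where your argument has a genuine gap. The paper applies \cite[Theorem 2.7]{Balazs Barany R^3}, a general nonlinear-projection theorem for planar self-similar sets with SSC: it gives $\dim_H g(\Lambda)=\min\{\dim_H\Lambda,1\}$ for suitable $C^1$ maps $g$ (such as $y\mapsto\left\Vert x-y\right\Vert$ on a cylinder not containing $x$) with \emph{no} hypothesis on the transformation group and no hypothesis that $\dim_H\Lambda\geq1$. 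That single citation handles every case at once. You instead try to assemble the conclusion from the weaker statements (Corollary 2.8 and Theorem 1.2 of \cite{Balazs Barany R^3}, Orponen's theorem, \cite[Corollary 1.7]{Hochman-Schmerkin-local entropy}, Theorem \ref{thm:hoch-shmer}), and the assembly does not close.

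Concretely: (i) in the case where $\mathcal{T}_{j,G}$ is finite, $\dim_H K_j<1$ and the pin $x$ is arbitrary, you reduce to a self-homothetic planar set of dimension $s<1$ and assert $\dim_H D_x(K)=s$ because the map is ``genuinely curved and not a single linear projection''; that heuristic is precisely the content of the theorem you would need to cite, and neither \cite[Corollary 1.7]{Hochman-Schmerkin-local entropy} nor Theorem \ref{thm:hoch-shmer} applies, since both require a dense Grassmannian orbit, which fails for finite $\mathcal{T}_{j,G}$. (ii) In the boundary case $\dim_H K_j=1$, every approximant $K$ produced by Theorem \ref{thm: MAIN} may have $\dim_H K<1$ strictly, so Corollary 2.8 ($\dim_H K\geq1\Rightarrow\dim_H D(K)=1$) is silent about every approximant and letting $\varepsilon\to0$ yields nothing; one again needs the subcritical statement $\dim_H D_x(K)\geq\dim_H K$ for the approximants, i.e.\ the same missing theorem. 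Both holes are filled by citing \cite[Theorem 2.7]{Balazs Barany R^3} directly, after which the whole proof is three lines and requires no case analysis.
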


\begin{proof}
Let $D_{x}(y)=\left\Vert x-y\right\Vert $ for $x,y\in\mathbb{R}^{2}$.
We can find $K\subseteq K_{j}$ as in Theorem \ref{thm: MAIN} for
every $\varepsilon>0$. Let $K_{\boldsymbol{\mathbf{i}}}$ be a cylinder
set such that $x\notin K_{\boldsymbol{\mathbf{i}}}$. Then $\Lambda=K_{\boldsymbol{\mathbf{i}}}$
is a self-similar set and $g(y)=D_{x}(y)$ satisfies the conditions
of \cite[Theorem 2.7]{Balazs Barany R^3} hence
\[
\dim_{H}D(K_{j})\geq\dim_{H}D_{x}(K)\geq\dim_{H}D_{x}(K_{\boldsymbol{\mathbf{i}}})=\min\left\{ \dim_{H}(K_{\boldsymbol{\mathbf{i}}}),1\right\} =\min\left\{ \dim_{H}(K),1\right\} 
\]
and this completes the proof.
\end{proof}

\begin{thm}
Let $\left\{ S_{e}:e\in\mathcal{E}\right\} $ be a strongly connected
GD-IFS in $\mathbb{R}^{3}$ with attractor $\left(K_{1},\ldots,K_{q}\right)$
and let $j\in\mathcal{V}$. If $\left|\mathcal{T}_{j,G}\right|<\infty$
and $\dim_{H}K_{j}>1$ then $\dim_{H}D(K_{j})=1$.
\end{thm}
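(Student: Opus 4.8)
\emph{Proof plan.} The statement is the three-dimensional analogue of Theorem \ref{thm:distance set R2}, and the strategy is the same: pass to a self-similar subset to which a known theorem for self-similar sets applies. Here the relevant ingredient is B\'ar\'any \cite[Theorem 1.2]{Balazs Barany R^3}, which asserts that if a self-similar set $K\subseteq\mathbb{R}^{3}$ arises from an SS-IFS all of whose orthogonal parts equal $Id_{\mathbb{R}^{3}}$ and if $\dim_{H}K>1$, then $\dim_{H}D(K)=1$. Accordingly, instead of Theorem \ref{thm: MAIN} I would invoke Corollary \ref{cor:finite approx cor}, which is available precisely because $\mathcal{T}_{j,G}$ is assumed finite.

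First, since $\mathcal{T}_{j,G}$ is a finite group it contains $Id_{\mathbb{R}^{3}}$. Because $\dim_{H}K_{j}>1$ we may fix $\varepsilon$ with $0<\varepsilon<\dim_{H}K_{j}-1$. Applying Corollary \ref{cor:finite approx cor} with $O=Id_{\mathbb{R}^{3}}$ and this $\varepsilon$, we obtain $r\in(0,1)$ and an SS-IFS $\left\{S_{i}\right\}_{i=1}^{m}$ satisfying the SSC, with attractor $K$ such that $K\subseteq K_{j}$, $\dim_{H}K>\dim_{H}K_{j}-\varepsilon>1$, and $T_{i}=Id_{\mathbb{R}^{3}}$, $r_{i}=r$ for every $i$. (In particular $\dim_{H}K>1$ forces $K$ not to be contained in any line, so no degeneracy issue arises.)

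Next, the SS-IFS $\left\{S_{i}\right\}_{i=1}^{m}$ now meets every hypothesis of \cite[Theorem 1.2]{Balazs Barany R^3}: it lives in $\mathbb{R}^{3}$, each $T_{i}=Id_{\mathbb{R}^{3}}$, the SSC holds (a fortiori the open set condition), and $\dim_{H}K>1$. Hence $\dim_{H}D(K)=1$. Finally, $K\subseteq K_{j}$ gives $D(K)\subseteq D(K_{j})$, so $\dim_{H}D(K_{j})\geq\dim_{H}D(K)=1$, while $D(K_{j})\subseteq\mathbb{R}$ trivially gives $\dim_{H}D(K_{j})\leq1$. Therefore $\dim_{H}D(K_{j})=1$.

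There is no substantial obstacle here; the whole difficulty has been front-loaded into Corollary \ref{cor:finite approx cor} (and thus into Theorem \ref{thm:approx trafo thm}). The only point that needs care is ensuring the cited theorem of B\'ar\'any truly applies, i.e. that we extract a self-similar subset with \emph{all} orthogonal parts equal to the identity and with dimension strictly above $1$; both are delivered by choosing $O=Id_{\mathbb{R}^{3}}$ in the corollary and by choosing $\varepsilon<\dim_{H}K_{j}-1$. Note also that, in contrast to the proof of Theorem \ref{thm:distance set R2}, no passage to a cylinder set is required, since we work with the full distance set rather than a pinned one.
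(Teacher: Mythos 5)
Your proposal is correct and follows exactly the paper's own argument: apply Corollary \ref{cor:finite approx cor} with $O=Id_{\mathbb{R}^{3}}$ and $\varepsilon<\dim_{H}K_{j}-1$ to extract a self-similar $K\subseteq K_{j}$ with all orthogonal parts the identity and $\dim_{H}K>1$, then invoke B\'ar\'any's Theorem 1.2 and the monotonicity $D(K)\subseteq D(K_{j})$. No differences worth noting.
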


By Corollary \ref{cor:finite approx cor} we can find an SS-IFS $\left\{ S_{i}\right\} _{i=1}^{m}$
with $K\subseteq K_{j}$ such that $\dim_{H}K>1$ and every $T_{i}=Id_{\mathbb{R}^{3}}$.
Then the statement follows by applying \cite[Theorem 1.2]{Balazs Barany R^3}
for $K$.

\begin{thm}
\label{thm:Rd D(K)}Let $\left\{ S_{e}:e\in\mathcal{E}\right\} $
be a strongly connected GD-IFS in $\mathbb{R}^{d}$ with attractor
$\left(K_{1},\ldots,K_{q}\right)$ and let $j\in\mathcal{V}$. If
\foreignlanguage{english}{there exists $M\in G_{d,1}$ such that the
set $\left\{ O(M):O\in\mathcal{T}_{j,G}\right\} $ is dense in $G_{d,1}$}
then $\dim_{H}D_{x}(K_{j})=\min\left\{ \dim_{H}(K_{j}),1\right\} $.
In particular, if $\dim_{H}K_{j}\geq1$ then $\dim_{H}D(K_{j})=\dim_{H}D_{x}(K_{j})=1$.
\end{thm}

Theorem \ref{thm:Rd D(K)} follows by applying Theorem \ref{thm:hoch-shmer}
to $g(y)=D_{x}(y)=\left\Vert x-y\right\Vert $ for some arbitrarily
chosen $x\in K_{j}$.

B\'ar\'any's paper \cite{Balazs Barany R^3} provides information
about the dimension of the arithmetic products of self-similar sets
in the line. In \cite[Corollary 2.9]{Balazs Barany R^3} he shows
that if $K\subseteq\mathbb{R}$ is a self-similar set then $\dim_{H}\left(K\cdot K\right)=\min\left\{ 2\dim_{H}(K),1\right\} $
where $A\cdot B=\left\{ a\cdot b:a\in A,b\in B\right\} $ for two
sets $A$ and $B$. In particular, if $\dim_{H}K\geq\frac{1}{2}$
then $\dim_{H}\left(K\cdot K\right)=1$. In \cite[Theorem 1.3]{Balazs Barany R^3}
he generalises this result to $K\cdot K\cdot K$ as he shows that
if $\dim_{H}K>\frac{1}{3}$ then $\dim_{H}\left(K\cdot K\cdot K\right)=1$.

\begin{thm}
Let $\left\{ S_{e}:e\in\mathcal{E}\right\} $ be a strongly connected
GD-IFS in $\mathbb{R}$ with attractor $\left(K_{1},\ldots,K_{q}\right)$
and let $j\in\mathcal{V}$. Then $\dim_{H}\left(K_{j}\cdot K_{j}\right)=\min\left\{ 2\dim_{H}(K_{j}),1\right\} $.
In particular, if $\dim_{H}K_{j}\geq\frac{1}{2}$ then $\dim_{H}\left(K_{j}\cdot K_{j}\right)=1$.
If $\dim_{H}K_{j}>\frac{1}{3}$ then $\dim_{H}\left(K_{j}\cdot K_{j}\cdot K_{j}\right)=1$.
\end{thm}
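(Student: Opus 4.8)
The plan is to sandwich $\dim_{H}(K_{j}\cdot K_{j})$ between an elementary upper bound and a lower bound obtained from Theorem~\ref{thm: MAIN} together with B\'ar\'any's results for self-similar subsets of the line. This is a transfer argument of exactly the same shape as the proof of Theorem~\ref{thm:distance set R2}.

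For the upper bound, observe that the multiplication map $\mu(x,y)=xy$ is $C^{\infty}$, hence Lipschitz on the compact set $K_{j}\times K_{j}$, so $\dim_{H}(K_{j}\cdot K_{j})\leq\dim_{H}(K_{j}\times K_{j})$; likewise $(x,y,z)\mapsto xyz$ is Lipschitz on $K_{j}\times K_{j}\times K_{j}$. Since $K_{j}$ is the attractor of a strongly connected GD-IFS, its Hausdorff and upper box dimensions agree, and combining this with $\overline{\dim}_{B}(A\times B)\leq\overline{\dim}_{B}A+\overline{\dim}_{B}B$ gives $\dim_{H}(K_{j}\times K_{j})\leq\overline{\dim}_{B}(K_{j}\times K_{j})\leq 2\overline{\dim}_{B}K_{j}=2\dim_{H}K_{j}$. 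As $K_{j}\cdot K_{j}\subseteq\mathbb{R}$ we also have $\dim_{H}(K_{j}\cdot K_{j})\leq 1$, so $\dim_{H}(K_{j}\cdot K_{j})\leq\min\{2\dim_{H}K_{j},1\}$, and trivially $\dim_{H}(K_{j}\cdot K_{j}\cdot K_{j})\leq 1$.

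For the lower bounds, fix $\varepsilon>0$ with $\varepsilon<\dim_{H}K_{j}$; recall that strong connectivity together with the standing assumption forces $\dim_{H}K_{j}>0$. Apply Theorem~\ref{thm: MAIN} to obtain a self-similar set $K\subseteq K_{j}$ satisfying the SSC with $\dim_{H}K>\dim_{H}K_{j}-\varepsilon>0$. Since $K\cdot K\subseteq K_{j}\cdot K_{j}$, \cite[Corollary 2.9]{Balazs Barany R^3} applied to $K$ gives
\[
\dim_{H}(K_{j}\cdot K_{j})\geq\dim_{H}(K\cdot K)=\min\{2\dim_{H}K,1\}\geq\min\{2(\dim_{H}K_{j}-\varepsilon),1\}.
\]
Letting $\varepsilon\to 0$ yields $\dim_{H}(K_{j}\cdot K_{j})\geq\min\{2\dim_{H}K_{j},1\}$, which together with the upper bound proves the first assertion; the case $\dim_{H}K_{j}\geq\frac{1}{2}$ is the ``in particular'' statement. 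Finally, if $\dim_{H}K_{j}>\frac{1}{3}$, choose $\varepsilon$ small enough that $\dim_{H}K>\frac{1}{3}$; then \cite[Theorem 1.3]{Balazs Barany R^3} gives $\dim_{H}(K\cdot K\cdot K)=1$, and since $K\cdot K\cdot K\subseteq K_{j}\cdot K_{j}\cdot K_{j}$ we conclude $\dim_{H}(K_{j}\cdot K_{j}\cdot K_{j})=1$.

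I do not anticipate a serious obstacle. The two points that need care are: (i) the identity $\dim_{H}K_{j}=\overline{\dim}_{B}K_{j}$ for attractors of strongly connected GD-IFS, which makes the product upper bound sharp; and (ii) checking that \cite[Corollary 2.9]{Balazs Barany R^3} and \cite[Theorem 1.3]{Balazs Barany R^3} are stated for arbitrary self-similar subsets of $\mathbb{R}$, so that they apply to the approximating set $K$ without any additional hypothesis.
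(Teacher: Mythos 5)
Your argument has the same overall shape as the paper's: a Lipschitz/product upper bound plus a transfer of B\'ar\'any's results to a self-similar subset supplied by the approximation machinery. The one substantive difference is which approximation result you invoke, and it lands exactly on the point you flag as needing care in (ii). The paper does not use Theorem \ref{thm: MAIN} here; it uses Corollary \ref{cor:finite approx cor}, which in $\mathbb{R}$ is always applicable because $\mathcal{T}_{j,G}$ is a subgroup of $\left\{ \pm Id_{\mathbb{R}}\right\} $ and hence finite, and which produces an approximating SS-IFS with SSC in which every $T_{i}=Id_{\mathbb{R}}$. This extra homogeneity is what guarantees that \cite[Corollary 2.9]{Balazs Barany R^3} and, more importantly, \cite[Theorem 1.3]{Balazs Barany R^3} apply: the triple-product result is obtained through B\'ar\'any's $\mathbb{R}^{3}$ analysis (cf.\ his Theorem 1.2, which explicitly assumes trivial orthogonal parts), and if some $T_{i}=-1$ the product system on $K\times K\times K$ has reflections in its orthogonal parts. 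So rather than leaving (ii) as an unchecked hypothesis, replace Theorem \ref{thm: MAIN} by Corollary \ref{cor:finite approx cor} in your construction of $K$; everything else in your proof (the upper bound via $\dim_{H}(K_{j}\times K_{j})\leq2\dim_{H}K_{j}$ using $\overline{\dim}_{B}K_{j}=\dim_{H}K_{j}$ from \cite[Theorem 3.2]{Falconer Techniques}, the monotonicity $K\cdot K\subseteq K_{j}\cdot K_{j}$, and letting $\varepsilon\rightarrow0$) matches the paper and is correct.
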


\begin{proof}
By \cite[Theorem 8.10]{Mattilakonyv} and \cite[Theorem 3.2]{Falconer Techniques}
$\dim_{H}\left(K_{j}\times K_{j}\right)=2\dim_{H}(K_{j})$. Since
multiplication is a locally Lipschitz map the upper bound $\dim_{H}\left(K_{j}\cdot K_{j}\right)\leq\min\left\{ 2\dim_{H}(K_{j}),1\right\} $
follows trivially. By Corollary \ref{cor:finite approx cor} for $\varepsilon>0$
we can find an SS-IFS with attractor $K$ such that $K\subseteq K_{j}$,
$\dim_{H}K_{j}-\varepsilon<\dim_{H}K$, and $T_{i}=Id_{\mathbb{R}}$
for every $i$. By choosing $\varepsilon$ small enough we may assume
that if $\dim_{H}K_{j}>\frac{1}{3}$ then $\dim_{H}K>\frac{1}{3}$.
Then we can apply \cite[Corollary 2.9, Theorem 1.3]{Balazs Barany R^3}
to $K$. Since $\varepsilon>0$ is arbitrary the theorem follows.
\end{proof}

Peres and Shmerkin \cite[Theorem 2]{Peres-Schmerkin Resonance between Cantor sets}
showed a similar result about arithmetic sums of self-similar sets.
They proved that if there are two SS-IFS $\left\{ S_{i}\right\} _{i=1}^{m}$
and $\left\{ \widehat{S}_{i}\right\} _{i=1}^{\widehat{m}}$ in $\mathbb{R}$
with attractors $K$ and $\widehat{K}$ such that $\log r_{1}/\log\widehat{r}_{1}\notin\mathbb{Q}$
then $\dim_{H}\left(K+\widehat{K}\right)=\min\left\{ \dim_{H}(K)+\dim_{H}(\widehat{K}),1\right\} $.

\begin{thm}
Let $\left\{ S_{e}:e\in\mathcal{E}\right\} $ and $\left\{ \widehat{S}_{e}:e\in\widehat{\mathcal{E}}\right\} $
be GD-IFS in $\mathbb{R}$ with attractors $\left(K_{1},\ldots,K_{q}\right)$
and $\left(\widehat{K}_{1},\ldots,\widehat{K}_{\widehat{q}}\right)$,
assume there exist $j\in\mathcal{V}$, $\widehat{j}\in\widehat{\mathcal{V}}$
and $\boldsymbol{\mathbf{e}}\in\mathcal{C}_{j}$, $\boldsymbol{\mathbf{f}}\in\widehat{\mathcal{C}}_{\widehat{j}}$
such that $\log r_{\boldsymbol{\mathbf{e}}}/\log r_{\boldsymbol{\mathbf{f}}}\notin\mathbb{Q}$,
then $\dim_{H}\left(K_{j}+\widehat{K}_{\widehat{j}}\right)=\min\left\{ \dim_{H}(K_{j})+\dim_{H}(\widehat{K}_{\widehat{j}}),1\right\} $.
\end{thm}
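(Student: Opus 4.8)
The plan is to reduce the statement about graph directed attractors to the corresponding theorem of Peres and Shmerkin for self-similar sets by invoking the approximation machinery developed earlier, specifically Remark \ref{quocient remark}. First I would observe that since multiplication... wait, addition here — since $(x,y)\mapsto x+y$ is a Lipschitz map $\mathbb{R}^2\to\mathbb{R}$, the upper bound $\dim_H(K_j+\widehat K_{\widehat j})\le \min\{\dim_H(K_j)+\dim_H(\widehat K_{\widehat j}),1\}$ is immediate once one knows $\dim_H(K_j\times\widehat K_{\widehat j})=\dim_H(K_j)+\dim_H(\widehat K_{\widehat j})$; for the latter I would cite \cite[Theorem 8.10]{Mattilakonyv} together with \cite[Theorem 3.2]{Falconer Techniques}, exactly as in the preceding proof about products. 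So the content is entirely in the lower bound.

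For the lower bound, fix $\varepsilon>0$. The hypothesis gives $\boldsymbol{\mathbf e}\in\mathcal C_j$ and $\boldsymbol{\mathbf f}\in\widehat{\mathcal C}_{\widehat j}$ with $\log r_{\boldsymbol{\mathbf e}}/\log r_{\boldsymbol{\mathbf f}}\notin\mathbb Q$, which is precisely the hypothesis of Remark \ref{quocient remark}. One subtlety: Remark \ref{quocient remark} is stated for \emph{strongly connected} GD-IFS, whereas the present theorem does not assume strong connectedness. I would handle this by passing to the strongly connected component of the graph $G$ containing $j$ (resp. $\widehat j$). Since by the global assumption some $K_i$ has at least two points, and since $K_j\supseteq S_{\boldsymbol{\mathbf e}}(K_j)$-type relations let one descend, one can pick a strongly connected sub-GD-IFS whose attractor component $K_j'$ satisfies $K_j'\subseteq K_j$ and $\dim_H K_j'$ is as close to $\dim_H K_j$ as desired; more simply, one can restrict to the vertices reachable from $j$ and from which $j$ is reachable, or just apply the approximation directly to a strongly connected restriction after noting $\dim_H K_j=\dim_H\bigcup_{\text{cycles through }j}(\dots)$. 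Actually the cleanest route is: a cylinder $K_{\boldsymbol{\mathbf g}}$ for $\boldsymbol{\mathbf g}\in\mathcal C_j$ is a similar copy of $K_j$ (not exactly — it is $S_{\boldsymbol{\mathbf g}}(K_j)\subseteq K_j$), and the sub-GD-IFS generated by all cycle-edges through $j$ is strongly connected with the same attractor-component dimension; I would spell this out in a sentence or two.

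Having arranged strong connectedness, Remark \ref{quocient remark} yields SS-IFS $\{S_i\}_{i=1}^m$ and $\{\widehat S_i\}_{i=1}^{\widehat m}$ satisfying the SSC, with attractors $K\subseteq K_j$, $\widehat K\subseteq\widehat K_{\widehat j}$, $\dim_H K_j-\varepsilon<\dim_H K$, $\dim_H\widehat K_{\widehat j}-\varepsilon<\dim_H\widehat K$, and crucially $\log r_1/\log\widehat r_1\notin\mathbb Q$. Then $K,\widehat K\subseteq\mathbb R$ are self-similar sets whose first similarity ratios have irrational log-ratio, so \cite[Theorem 2]{Peres-Schmerkin Resonance between Cantor sets} applies and gives
\[
\dim_H(K_j+\widehat K_{\widehat j})\ge\dim_H(K+\widehat K)=\min\{\dim_H K+\dim_H\widehat K,1\}\ge\min\{\dim_H K_j+\dim_H\widehat K_{\widehat j}-2\varepsilon,1\}.
\]
Letting $\varepsilon\to 0$ finishes the lower bound and hence the theorem.

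The main obstacle I anticipate is the strong-connectedness gap: Remark \ref{quocient remark} (and Theorem \ref{thm: MAIN} on which it rests) assumes the GD-IFS is strongly connected, while this theorem does not. The fix is routine but must be stated carefully — one needs to extract, for each of the two systems, a strongly connected sub-GD-IFS whose relevant attractor component sits inside $K_j$ (resp. $\widehat K_{\widehat j}$) and carries essentially the full Hausdorff dimension, and one must check that the irrational log-ratio hypothesis survives the restriction (it does, since $\boldsymbol{\mathbf e}\in\mathcal C_j$ and $\boldsymbol{\mathbf f}\in\widehat{\mathcal C}_{\widehat j}$ lie in the strongly connected components through $j$ and $\widehat j$ respectively). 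Everything else is a direct citation.
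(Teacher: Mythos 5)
Your proof is essentially identical to the paper's: the upper bound via the product dimension formula and local Lipschitz continuity of addition, and the lower bound by applying Remark \ref{quocient remark} together with Peres and Shmerkin's theorem on sums of self-similar sets. The strong-connectedness gap you flag is real but is not addressed in the paper either — the hypothesis is evidently just omitted from the theorem statement (Remark \ref{quocient remark}, on which the proof rests, assumes it) — and your proposed repair is itself incomplete, since the strongly connected component through $j$ need not carry the full dimension of $K_{j}$ and the cycle $\boldsymbol{\mathbf{e}}\in\mathcal{C}_{j}$ need not lie in whichever component does; the cleanest reading is simply to add strong connectedness as a hypothesis, after which your argument coincides with the paper's.
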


\begin{proof}
By \cite[Theorem 8.10]{Mattilakonyv} and \cite[Theorem 3.2]{Falconer Techniques}
$\dim_{H}\left(K_{j}\times\widehat{K}_{\widehat{j}}\right)=\dim_{H}(K_{j})+\dim_{H}(\widehat{K}_{\widehat{j}})$.
Since addition is a locally Lipschitz map the upper bound
\[
\dim_{H}\left(K_{j}+\widehat{K}_{\widehat{j}}\right)\leq\min\left\{ \dim_{H}(K_{j})+\dim_{H}(\widehat{K}_{\widehat{j}}),1\right\} 
\]
follows trivially. The opposite inequality follows from \cite[Theorem 2]{Peres-Schmerkin Resonance between Cantor sets}
and Remark \ref{quocient remark}.
\end{proof}

The dimension approximation theorems have consequences in connection
with \foreignlanguage{english}{Furstenberg}'s \foreignlanguage{english}{`dimension
conservation'}. \foreignlanguage{english}{If $f:A\longrightarrow\mathbb{R}^{d_{2}}$
is a Lipschitz map where $A\subseteq\mathbb{R}^{d}$ we say that $f$
is \textsl{dimension conserving} if, for some $\delta\geq0$,
\[
\delta+\dim_{H}\left\{ y\in f(A):\dim_{H}(f^{-1}(y))\geq\delta\right\} \geq\dim_{H}A
\]
with that convention that $\dim_{H}(\emptyset)=-\infty$ so that $\delta$
cannot be chosen too large. Furstenberg also introduces the definition
of a `homogeneous set' \cite[Definition 1.4]{Furstenberg-dimension conservation}.
The main theorem of that paper \cite[Theorem 6.2]{Furstenberg-dimension conservation}
states that the restriction of a linear map to a homogeneous compact
set is dimension conserving. It is pointed out in the paper that if
$K$ is a self-similar set, $\mathcal{T}$ has only one element and
the SSC is satisfied then $K$ is homogeneous. It follows from the
definition of homogeneous sets that $K_{j}$ is homogeneous even if
}$\left(K_{1},\ldots,K_{q}\right)$ is a graph directed attractor
of a strongly connected GD-IFS,\foreignlanguage{english}{ $\mathcal{T}_{j,G}$
is finite and the SSC is satisfied. Thus for such $K$ the restriction
of any linear map to $K$ is dimension conserving.}

Applying the dimension approximation results does not give exact dimension
conservation. However, we can deduce \foreignlanguage{english}{`almost
dimension conservation'}.

\begin{thm}
Let $\left\{ S_{e}:e\in\mathcal{E}\right\} $ be a strongly connected
GD-IFS in $\mathbb{R}^{d}$ with attractor $\left(K_{1},\ldots,K_{q}\right)$,
let $j\in\mathcal{V}$, let $L:\mathbb{R}^{d}\longrightarrow\mathbb{R}^{d_{2}}$
be a linear map ($d_{2}\in\mathbb{N}$) and assume that $\left|\mathcal{T}_{j,G}\right|<\infty$.
Then there exists $\delta\geq0$ such that for every $\varepsilon>0$
\[
\delta+\dim_{H}\left\{ y\in L(K_{j}):\dim_{H}(L^{-1}(y)\cap K_{j})\geq\delta-\varepsilon\right\} \geq\dim_{H}K_{j}.
\]
\end{thm}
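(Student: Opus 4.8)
The strategy is to transfer Furstenberg's exact dimension conservation for homogeneous sets (his \cite[Theorem 6.2]{Furstenberg-dimension conservation}) across the approximation provided by Corollary \ref{cor:finite approx cor}. Fix $\varepsilon>0$. Since $|\mathcal{T}_{j,G}|<\infty$, Corollary \ref{cor:finite approx cor} (applied with $O=Id_{\mathbb{R}^d}$, and with $\varepsilon$ replaced by a suitably small quantity) furnishes an SS-IFS $\{S_i\}_{i=1}^m$ satisfying the SSC, with attractor $K\subseteq K_j$, with $\dim_H K_j-\varepsilon<\dim_H K$, and with all $T_i=Id_{\mathbb{R}^d}$ and all $r_i=r$ equal. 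As noted in the paragraph preceding the statement, such a $K$ is homogeneous in Furstenberg's sense, so the restriction $L|_K$ is dimension conserving: there is $\delta_K\geq 0$ with
\[
\delta_K+\dim_H\left\{y\in L(K):\dim_H(L^{-1}(y)\cap K)\geq\delta_K\right\}\geq\dim_H K.
\]
Because $K\subseteq K_j$ we have $L(K)\subseteq L(K_j)$ and $L^{-1}(y)\cap K\subseteq L^{-1}(y)\cap K_j$, so the set on the left only grows when $K$ is replaced by $K_j$ in the two occurrences; hence the same $\delta_K$ satisfies
\[
\delta_K+\dim_H\left\{y\in L(K_j):\dim_H(L^{-1}(y)\cap K_j)\geq\delta_K\right\}\geq\dim_H K>\dim_H K_j-\varepsilon.
\]

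The remaining issue is that the witness $\delta_K$ depends on $\varepsilon$, whereas the statement asks for a single $\delta\ge 0$ good for all $\varepsilon>0$. Here I would use a compactness argument. The exponent $\delta_K$ can be taken in the compact interval $[0,\dim_H K_j]$ (it cannot exceed the fibre dimension, hence not $\dim_H K_j$). Choosing a sequence $\varepsilon_n\downarrow 0$ and corresponding $\delta_{K_n}$, pass to a convergent subsequence $\delta_{K_n}\to\delta\in[0,\dim_H K_j]$. I claim this limit $\delta$ works for every $\varepsilon>0$: given $\varepsilon$, pick $n$ large enough that $\varepsilon_n<\varepsilon/2$ and $|\delta_{K_n}-\delta|<\varepsilon/2$. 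Then $\{y\in L(K_j):\dim_H(L^{-1}(y)\cap K_j)\geq\delta_{K_n}\}\subseteq\{y\in L(K_j):\dim_H(L^{-1}(y)\cap K_j)\geq\delta-\varepsilon\}$, so the dimension of the larger set is at least that of the smaller, and
\[
\delta-\tfrac{\varepsilon}{2}+\dim_H\left\{y\in L(K_j):\dim_H(L^{-1}(y)\cap K_j)\geq\delta-\varepsilon\right\}\geq\delta_{K_n}+\dim_H\left\{y:\dim_H(L^{-1}(y)\cap K_j)\geq\delta_{K_n}\right\}\geq\dim_H K_j-\varepsilon_n>\dim_H K_j-\tfrac{\varepsilon}{2},
\]
which rearranges to the desired inequality. (If at the end one wants the bound with $\varepsilon$ rather than $\varepsilon/2$, simply start the construction with $\varepsilon/2$ in place of $\varepsilon$.)

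The main obstacle is precisely this decoupling of $\delta$ from $\varepsilon$: dimension conservation is an existential statement about $\delta$, and different approximating self-similar subsets $K$ could a priori force different conservation exponents, so one must extract a stable limiting value and check that the defining inequality is robust under the two perturbations ($\delta_{K_n}\to\delta$ and $\varepsilon_n\to 0$). The monotonicity of the set $\{y:\dim_H(L^{-1}(y)\cap K_j)\ge t\}$ in $t$ is what makes the limit argument go through. Everything else — the existence of the well-behaved self-similar subset, the homogeneity of $K$, and Furstenberg's theorem — is quoted directly.
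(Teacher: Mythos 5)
Your approach is the same as the paper's: approximate $K_j$ from inside by self-similar sets $K^{(n)}$ with trivial orthogonal parts via Corollary \ref{cor:finite approx cor}, invoke homogeneity and Furstenberg's theorem to get conservation exponents $\delta_n$, pass to a convergent subsequence $\delta_{n}\to\delta$, and use the monotonicity of $t\mapsto\{y:\dim_{H}(L^{-1}(y)\cap K_j)\ge t\}$ together with $K^{(n)}\subseteq K_j$ to transfer the inequality to $K_j$. All of that matches the paper's proof.

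There is, however, a genuine error in your final displayed chain. The first inequality
\[
\delta-\tfrac{\varepsilon}{2}+\dim_{H}\left\{ y:\dim_{H}(L^{-1}(y)\cap K_{j})\geq\delta-\varepsilon\right\} \geq\delta_{K_{n}}+\dim_{H}\left\{ y:\dim_{H}(L^{-1}(y)\cap K_{j})\geq\delta_{K_{n}}\right\}
\]
requires, after cancelling the (possibly equal) dimension terms, that $\delta-\varepsilon/2\geq\delta_{K_{n}}$; but your choice of $n$ only gives $|\delta_{K_{n}}-\delta|<\varepsilon/2$, so $\delta_{K_{n}}$ may well lie in $(\delta-\varepsilon/2,\,\delta+\varepsilon/2]$, in which case the inequality fails. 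Using the correct bound $\delta>\delta_{K_{n}}-\varepsilon/2$ instead, a single fixed $n$ only yields $\delta+\dim_{H}\{\cdots\geq\delta-\varepsilon\}>\dim_{H}K_{j}-\varepsilon$, which is weaker than the claimed conclusion. The repair is exactly what the paper does: for a \emph{fixed} $\varepsilon>0$, the left-hand side $\delta+\dim_{H}\{y:\dim_{H}(L^{-1}(y)\cap K_{j})\geq\delta-\varepsilon\}$ does not depend on $n$, while for every $n$ along the subsequence with $\delta_{K_{n}}\geq\delta-\varepsilon$ one has
\[
\delta+\dim_{H}\left\{ y:\dim_{H}(L^{-1}(y)\cap K_{j})\geq\delta-\varepsilon\right\} \geq\dim_{H}K_{j}-\varepsilon_{n}-(\delta_{K_{n}}-\delta);
\]
letting $n\to\infty$ kills both error terms and gives the bound $\geq\dim_{H}K_{j}$ exactly. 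With that one-line change your argument is complete and coincides with the paper's.
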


\begin{proof}
By Corollary \ref{cor:finite approx cor}, for all $n\in\mathbb{N}$
we can find an SS-IFS that satisfies the SSC with attractor $K^{(n)}$
such that $K^{(n)}\subseteq K_{j}$, $\dim_{H}K_{j}-1/n<\dim_{H}K^{(n)}$,
and the orthogonal part of the similarities are $Id_{\mathbb{R}^{d}}$.
Hence $L\vert_{K^{(n)}}$ is a dimension conserving map by \cite[Theorem 6.2]{Furstenberg-dimension conservation}
and the fact that $K^{(n)}$ is homogeneous. Thus there exists $\delta_{n}\geq0$
such that 
\[
\delta_{n}+\dim_{H}\left\{ y\in L(K^{(n)}):\dim_{H}(L^{-1}(y)\cap K^{(n)})\geq\delta_{n}\right\} \geq\dim_{H}K^{(n)}>\dim_{H}K_{j}-1/n.
\]
We can take a convergent subsequence $\delta_{n_{k}}$ of $\delta_{n}$
with limit $\delta$. Let $\varepsilon>0$ be arbitrary. Then 
\[
\delta_{n_{k}}+\dim_{H}\left\{ y\in L(K_{j}):\dim_{H}(L^{-1}(y)\cap K_{j})\geq\delta-\varepsilon\right\} >\dim_{H}K_{j}-1/n_{k}
\]
whenever $\delta_{n_{k}}\geq\delta-\varepsilon$. Taking the limit
on both sides we get the conclusion of the Theorem.
\end{proof}

When $G_{d,l}$ has a dense orbit under the action of $\mathcal{T}_{j,G}$
where $l\geq\dim_{H}(K_{j})$ is the rank of the linear map, then
we can prove dimension conservation.
\begin{thm}
Let $\left\{ S_{e}:e\in\mathcal{E}\right\} $ be a strongly connected
GD-IFS in $\mathbb{R}^{d}$ with attractor $\left(K_{1},\ldots,K_{q}\right)$,
let $j\in\mathcal{V}$ and \foreignlanguage{english}{$U$ be an open
neighbourhood of $K_{j}$. If} \foreignlanguage{english}{$g:U\longrightarrow\mathbb{R}^{d_{2}}$
is a continuously differentiable map ($d_{2}\in\mathbb{N}$) such
that $\mathrm{rank}(g'(x))=l$ for every $x\in K_{j}$ where $\dim_{H}(K_{j})\leq l$
and there exists $M\in G_{d,l}$ such that the set $\left\{ O(M):O\in\mathcal{T}_{j,G}\right\} $
is dense in $G_{d,l}$ for some $1\leq l<d$ then $g\vert_{K_{j}}$
is a dimension conserving map.}
\end{thm}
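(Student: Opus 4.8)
The plan is to reduce the statement to Furstenberg's theorem \cite[Theorem 6.2]{Furstenberg-dimension conservation} applied to a suitable self-similar subset, just as in the previous theorem, but now exploiting Theorem \ref{thm:hoch-shmer} (and Corollary \ref{cor:hoch-shmer cor}) to handle the fact that $g$ is only $C^1$ and that the orthogonal parts need not be trivial. First I would record the easy direction: since $g$ is locally Lipschitz on a neighbourhood of the compact set $K_j$, for any $\delta\ge 0$ one has $\delta+\dim_H\{y\in g(K_j):\dim_H(g^{-1}(y)\cap K_j)\ge\delta\}\le\dim_H K_j$, so the content is the reverse inequality; in fact I expect to prove the sharp statement that with $\delta=0$ we get equality, i.e. $\dim_H g(K_j)=\dim_H K_j$, which already gives dimension conservation with $\delta=0$ since then $g^{-1}(y)\cap K_j\neq\emptyset$ for every $y\in g(K_j)$, so $\dim_H(g^{-1}(y)\cap K_j)\ge 0=\delta$ for all such $y$. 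Indeed, under the hypothesis $\dim_H(K_j)\le l$, Corollary \ref{cor:hoch-shmer cor}(ii) gives $\dim_H g(K_j)=\min\{\dim_H K_j,l\}=\dim_H K_j$, which is exactly dimension conservation with the choice $\delta=0$.

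So the proof is short: take $\delta=0$. By Corollary \ref{cor:hoch-shmer cor}, item (ii) (which applies because $\mathrm{rank}(g'(x))=l$ for every $x\in K_j$, the orbit $\{O(M):O\in\mathcal T_{j,G}\}$ is dense in $G_{d,l}$, and $\dim_H(K_j)\le l$), we have
\[
\dim_H\bigl(g(K_j)\bigr)=\min\{\dim_H(K_j),l\}=\dim_H(K_j).
\]
Since $g(K_j)$ is the image of $K_j$, every $y\in g(K_j)$ has $g^{-1}(y)\cap K_j\neq\emptyset$, hence $\dim_H(g^{-1}(y)\cap K_j)\ge 0$ for all $y\in g(K_j)$. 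Therefore
\[
0+\dim_H\bigl\{y\in g(K_j):\dim_H(g^{-1}(y)\cap K_j)\ge 0\bigr\}=\dim_H\bigl(g(K_j)\bigr)=\dim_H(K_j),
\]
which is the definition of $g\vert_{K_j}$ being dimension conserving (with $\delta=0$, and one checks $\delta$ cannot be taken larger precisely because of the trivial Lipschitz upper bound above).

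The only subtlety — and the place I would be most careful — is verifying that Corollary \ref{cor:hoch-shmer cor} genuinely applies: its hypothesis requires $\mathrm{rank}(g'(x))=l$ for \emph{every} $x\in K_j$ (which is assumed here, unlike the weaker "for some $x$" hypothesis of Theorem \ref{thm:hoch-shmer}), and requires $1\le l<d$ together with density of the orbit of some $M\in G_{d,l}$, all of which are in the hypotheses of the present theorem. One should note that we do \emph{not} need any finiteness of $\mathcal T_{j,G}$ here — the density of the Grassmannian orbit replaces that role, and Furstenberg's theorem itself is not invoked directly; rather, the Hochman–Shmerkin-type projection/smooth-image result packaged in Corollary \ref{cor:hoch-shmer cor} does all the work, and dimension conservation with $\delta=0$ is then a formal consequence of the dimension of the image being as large as possible.
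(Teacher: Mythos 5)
Your proposal is correct and is exactly the paper's argument: the paper's entire proof is the remark ``This follows from Corollary \ref{cor:hoch-shmer cor} taking $\delta=0$,'' and you have simply spelled out why $\delta=0$ works, namely that Corollary \ref{cor:hoch-shmer cor}(ii) gives $\dim_{H}g(K_{j})=\dim_{H}K_{j}$ and every fibre over $g(K_{j})$ is nonempty. No further comment is needed.
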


This follows from Corollary \ref{cor:hoch-shmer cor} taking $\delta=0$\foreignlanguage{english}{.}

\selectlanguage{english}%
On the plane either $\left|\mathcal{T}_{j,G}\right|<\infty$ or $\left|\mathcal{T}_{j,G}\right|=\infty$
implies that $\left\{ O(M):O\in\mathcal{T}_{j,G}\right\} $ is dense
in $G_{2,1}$ for every $M\in G_{2,1}$. Falconer and Jin \cite[Theorem 4.8]{Falconer-Jin}
showed a property in some sense stronger than `almost dimension conservation'
for the projections of a self-similar set with infinite transformation
group $\mathcal{T}$ when $1<\dim_{H}(K_{j})$. We generalise their
result to graph directed attractors with no separation condition.\foreignlanguage{british}{
Let $\Pi_{\theta}$ denote the orthogonal projection map onto the
line $\left\{ (\lambda\cos\theta,\lambda\sin\theta):\lambda\in\mathbb{R}\right\} $.}

\selectlanguage{british}%

\begin{thm}
Let $\left\{ S_{e}:e\in\mathcal{E}\right\} $ be a strongly connected
GD-IFS in $\mathbb{R}^{2}$ with attractor $\left(K_{1},\ldots,K_{q}\right)$
and assume that $\left|\mathcal{T}_{j,G}\right|=\infty$. If $\dim_{H}(K_{j})>1$
then there exists $E\subseteq[0,\pi)$ with $\dim_{H}E=0$ such that
for all $\theta\in[0,\pi)\setminus E$ and for all $\varepsilon>0$
\[
\mathcal{L}^{1}\left\{ y\in\Pi_{\theta}(K_{j}):\dim_{H}(\Pi_{\theta}^{-1}(y)\cap K_{j})\geq\dim_{H}(K_{j})-1-\varepsilon\right\} >0.
\]
\end{thm}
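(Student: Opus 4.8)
The plan is to reduce the statement to the corresponding result for self-similar sets, namely Falconer and Jin's \cite[Theorem 4.8]{Falconer-Jin}, by passing to a self-similar subset of $K_{j}$ of almost full dimension, exactly as in the proofs of the other applications in this section. First I would invoke Theorem \ref{thm: MAIN} (or rather a sequence of applications of it): since $\left|\mathcal{T}_{j,G}\right|=\infty$ on the plane, the closure $\overline{\mathcal{T}_{j,G}}$ is an infinite subgroup of $\mathbb{O}_{2}$, so a dense subgroup of it is also infinite. For each $n\in\mathbb{N}$, Theorem \ref{thm: MAIN} produces an SS-IFS $\left\{ S_{i}^{(n)}\right\}$ satisfying the SSC with attractor $K^{(n)}\subseteq K_{j}$, with $\dim_{H}K_{j}-1/n<\dim_{H}K^{(n)}$, and whose transformation group $\mathcal{T}^{(n)}$ is dense in $\mathcal{T}_{j,G}$, hence infinite. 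Choosing $n$ large enough we may assume $\dim_{H}K^{(n)}>1$ (possible since $\dim_{H}K_{j}>1$).

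Next I would apply \cite[Theorem 4.8]{Falconer-Jin} to each $K^{(n)}$: since $K^{(n)}$ is a planar self-similar set with infinite transformation group and $\dim_{H}K^{(n)}>1$, there is an exceptional set $E_{n}\subseteq[0,\pi)$ with $\dim_{H}E_{n}=0$ such that for every $\theta\in[0,\pi)\setminus E_{n}$,
\[
\mathcal{L}^{1}\left\{ y\in\Pi_{\theta}(K^{(n)}):\dim_{H}(\Pi_{\theta}^{-1}(y)\cap K^{(n)})\geq\dim_{H}(K^{(n)})-1\right\} >0.
\]
Set $E=\bigcup_{n}E_{n}$; then $\dim_{H}E=0$ since a countable union of zero-dimensional sets is zero-dimensional. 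Fix $\theta\in[0,\pi)\setminus E$ and $\varepsilon>0$. Choosing $n$ with $1/n<\varepsilon$, the inclusions $\Pi_{\theta}(K^{(n)})\subseteq\Pi_{\theta}(K_{j})$ and, for each $y$, $\Pi_{\theta}^{-1}(y)\cap K^{(n)}\subseteq\Pi_{\theta}^{-1}(y)\cap K_{j}$ give
\[
\mathcal{L}^{1}\left\{ y\in\Pi_{\theta}(K_{j}):\dim_{H}(\Pi_{\theta}^{-1}(y)\cap K_{j})\geq\dim_{H}(K^{(n)})-1\right\} >0,
\]
and since $\dim_{H}(K^{(n)})-1>\dim_{H}(K_{j})-1-\varepsilon$, the monotonicity of the set in the defining inequality yields the claimed conclusion.

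The only genuine subtlety is making sure the quantitative hypotheses of \cite[Theorem 4.8]{Falconer-Jin} are met by the $K^{(n)}$ — that is, that the transformation group of the approximating SS-IFS is honestly infinite and that its Hausdorff dimension genuinely exceeds $1$ — both of which are delivered by Theorem \ref{thm: MAIN} once we observe that a dense subgroup of an infinite group is infinite; the rest is the now-routine ``union of null exceptional sets plus monotonicity in $\varepsilon$'' bookkeeping. I expect no serious obstacle: this is a direct transfer argument of the same shape as the proof of Theorem \ref{thm:distance set R2}.
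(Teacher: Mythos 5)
Your overall transfer scheme --- approximate $K_{j}$ from inside by self-similar sets $K^{(n)}$ of dimension exceeding $\dim_{H}K_{j}-1/n$, apply the Falconer--Jin projection theorem to each, take $E=\bigcup_{n}E_{n}$, and finish by monotonicity --- is exactly the shape of the paper's proof, and the final bookkeeping (countable union of dimension-zero exceptional sets, choosing $1/n\leq\varepsilon$, inclusion of fibres $\Pi_{\theta}^{-1}(y)\cap K^{(n)}\subseteq\Pi_{\theta}^{-1}(y)\cap K_{j}$) is correct.

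The gap is in which approximation theorem you feed into the Falconer--Jin machinery and which of their statements you invoke. You take the $K^{(n)}$ from Theorem \ref{thm: MAIN}, which only guarantees a dense (hence infinite) transformation group, and you then quote \cite[Theorem 4.8]{Falconer-Jin} in an $\varepsilon$-free form ($\dim_{H}(\Pi_{\theta}^{-1}(y)\cap K^{(n)})\geq\dim_{H}K^{(n)}-1$ on a set of positive measure) under the sole hypothesis of an infinite transformation group. The paper instead uses Theorem \ref{thm:planar dense thm} --- this is in fact the only application in the paper of that theorem --- precisely in order to produce an SS-IFS whose maps all have the \emph{same} rotation $O^{(n)}\in\mathbb{SO}_{2}$ and the \emph{same} ratio $r^{(n)}$, and it then applies \cite[Theorem 4.6]{Falconer-Jin}, whose conclusion carries a loss: $\dim_{H}(\Pi_{\theta}^{-1}(y)\cap K^{(n)})\geq\dim_{H}K^{(n)}-1-\varepsilon/2$. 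Two consequences. First, the $\varepsilon$ in the statement you are proving is there because the available input already loses an $\varepsilon$; your $\varepsilon$-free quotation of Falconer--Jin is stronger than what the paper relies on and must be checked against their actual statement (if only the $\varepsilon$-version holds, your argument survives with the same $\varepsilon/2$ adjustment the paper makes). Second, and more seriously, if the relevant Falconer--Jin theorem requires a homogeneous, orientation-preserving system --- which is what the existence and deployment of Theorem \ref{thm:planar dense thm} strongly indicates --- then Theorem \ref{thm: MAIN} does not deliver strong enough hypotheses: its $K^{(n)}$ may have several distinct contraction ratios and its transformation group may contain reflections. Replacing Theorem \ref{thm: MAIN} by Theorem \ref{thm:planar dense thm} fixes this; everything else in your argument then goes through essentially verbatim.
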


\begin{proof}
By Theorem \ref{thm:planar dense thm} there exist SS-IFSs that satisfy
the SSC with attractor $K^{(n)}$ such that $K^{(n)}\subseteq K_{j}$,
$\dim_{H}K_{j}-1/n<\dim_{H}K^{(n)}$, the transformation group $\mathcal{T}^{(n)}$
is dense in $\mathcal{T}_{j,G}\cap\mathbb{SO}_{2}$ and $T_{i}=O^{(n)}$,
$r_{i}=r^{(n)}$ for every $i\in\mathcal{I}^{(n)}$. Then by \cite[Theorem 4.6]{Falconer-Jin}
whenever $\dim_{H}K_{j}-1/n>1$ there exists $E^{(n)}\subseteq[0,\pi)$
with $\dim_{H}E^{(n)}=0$ such that for all $\theta\in[0,\pi)\setminus E^{(n)}$
and for all $\varepsilon>0$ 
\[
\mathcal{L}^{1}\left\{ y\in\Pi_{\theta}(K^{(n)}):\dim_{H}(\Pi_{\theta}^{-1}(y)\cap K^{(n)})\geq\dim_{H}(K^{(n)})-1-\varepsilon/2\right\} >0.
\]
Let $E=\bigcup_{n=1}^{\infty}E^{(n)}$. Then taking $1/n\leq\varepsilon/2$
and $\theta\in[0,\pi)\setminus E$ it follows that
\[
\mathcal{L}^{1}\left\{ y\in\Pi_{\theta}(K_{j}):\dim_{H}(\Pi_{\theta}^{-1}(y)\cap K_{j})\geq\dim_{H}(K_{j})-1-\varepsilon\right\} >0.
\]
\end{proof}

\subsection{Approximation theorems for subshifts of finite type\label{subsec:SSOFT section}}

Subshifts of finite type and graph directed attractors are the same
in some sense (see \cite[Proposition 2.5, Proposition 2.6]{Farkas-Fraser SSOFT}).
Thus we can extend our results to subshifts of finite type. Let $\mathcal{J}=\left\{ 0,1,\dots,M-1\right\} $
be a finite alphabet and $A$ be an $M\times M$ transition matrix
indexed by $\mathcal{J}\times\mathcal{J}$ with entries in $\left\{ 0,1\right\} $.
We define the subshift of finite type corresponding to $A$ as
\[
\Sigma_{A}=\left\{ \alpha=(\alpha_{0},\alpha_{1},\dots)\in\mathcal{J}^{\mathbb{N}}:A_{\alpha_{i},\alpha_{i+1}}=1\,\mathrm{for\,all}\,i=0,1,\dots\right\} .
\]
We say $\Sigma_{A}$ is irreducible (or transitive) if the matrix
$A$ is irreducible, which means that for all pairs $i,j\in\mathcal{J}$,
there exists a positive integer $n$ such that $\left(A^{n}\right)_{i,j}>0$.
To each $i\in\mathcal{J}$ associate a contracting similarity map
$S_{i}(x)=r_{i}\cdot T_{i}(x)+v_{i}$ on $\mathbb{R}^{d}$ where $r_{i}\in(0,1)$,
$T_{i}\in\mathbb{O}_{d}$ and $v_{i}\in\mathbb{R}^{d}$. For $\alpha=(\alpha_{0},\alpha_{1},\dots)\in\mathcal{J}^{\mathbb{N}}$
and $k\in\mathbb{N}$ we write $\alpha\vert_{k}=(\alpha_{0},\dots,\alpha_{k})\in\mathcal{J}^{k}$
and for $\boldsymbol{\mathbf{i}}=(i_{0},\dots,i_{k-1})\in\mathcal{J}^{k}$
we write
\[
S_{\boldsymbol{\mathbf{i}}}=S_{i_{0}}\circ\dots S_{i_{k-1}}.
\]
Then $\Pi(\alpha)=\lim_{k\rightarrow\infty}S_{\alpha\vert_{k}}(0)$
exists for every $\alpha=(\alpha_{0},\alpha_{1},\dots)\in\mathcal{J}^{\mathbb{N}}$.
For a given subshift of finite type we study the set $F_{A}=\Pi(\Sigma_{A})\subseteq\mathbb{R}^{d}$.
For $j\in\mathcal{J}$ let $F_{A}^{j}=\Pi(\Sigma_{A}^{j})$ where
$\Sigma_{A}^{j}=\left\{ (\alpha_{0},\alpha_{1},\dots)\in\Sigma_{A}:\alpha_{0}=j\right\} $.
Note that if $\Sigma_{A}$ is an irreducible then $\dim_{H}F_{A}=\dim_{H}F_{A}^{j}$
for every $j\in\mathcal{J}$.

If $\Sigma_{A}$ is an irreducible subshift of finite type then there
exists a strongly connected GD-IFS with attractor $(F_{A}^{1},\dots,F_{A}^{M-1})$,
see \cite[Proposition 2.6]{Farkas-Fraser SSOFT}. For the completeness
we include the construction. Let $\mathcal{J}$ be the set of vertices.
We draw a directed edge $e=e_{i,j}$ from $i$ to $j$ if $A_{i,j}=1$,
let $S_{e}=S_{i}$ and let $\mathcal{E}=\left\{ e_{i,j}:i,j\in\mathcal{J},A_{i,j}=1\right\} $.
If $A$ is irreducible then the graph $G(\mathcal{J},\mathcal{E})$
is strongly connected. We have that
\[
F_{A}^{i}=\bigcup_{j\in\mathcal{J},A_{i,j}=1}S_{i}(F_{A}^{j})=\bigcup_{j\in\mathcal{J}}\bigcup_{e\in\mathcal{E}_{i,j}}S_{e}(F_{A}^{j}).
\]
Then the set of directed cycles in $G(\mathcal{J},\mathcal{E})$ is
\[
\mathcal{C}_{j}=\bigcup_{k=1}^{\infty}\mathcal{E}_{j,j}^{k}=\bigcup_{k=1}^{\infty}\biggr\{(\alpha_{0},\dots,\alpha_{k-1})\in\mathcal{J}^{k}:A_{j,\alpha_{0}}=1,A_{\alpha_{k-1},j}=1
\]
\[
\mathrm{and}\,A_{\alpha_{i},\alpha_{i+1}}=1\,\mathrm{for\,all}\,i=0,1,\dots k-2\biggl\}.
\]
Hence we define the \textsl{$j$-th transformation group $\mathcal{T}_{j,A}^{G}$
of $\Sigma_{A}$} to be the group generated by the semigroup
\[
\left\{ T_{\alpha}:k\in\mathbb{N},\alpha=(\alpha_{0},\dots,\alpha_{k-1})\in\mathcal{J}^{k},A_{j,\alpha_{0}}=1\,\mathrm{and}\,A_{\alpha_{i},\alpha_{i+1}}=1\,\mathrm{for\,all}\,i=0,1,\dots k-2\right\} .
\]
Note that if $A_{j,\alpha_{0}}=1$ then $T_{\alpha_{0}}=T_{j}$. Now
we are ready to reformulate Theorem \ref{thm: MAIN} for subshifts
of finite type.

\begin{thm}
Let $\Sigma_{A}$ be an irreducible subshift of finite type and $j\in\mathcal{J}$.
Then for every $\varepsilon>0$ there exists an SS-IFS $\left\{ S_{i}\right\} _{i=1}^{m}$
that satisfies the SSC, with attractor $K$ such that $K\subseteq F_{A}^{j}\subseteq F_{A}$,
$\dim_{H}F_{A}-\varepsilon<\dim_{H}K$ and the transformation group
$\mathcal{T}$ of $\left\{ S_{i}\right\} _{i=1}^{m}$ is dense in
$\mathcal{T}_{j,A}^{G}$.
\end{thm}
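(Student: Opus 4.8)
The plan is to read this off from Theorem~\ref{thm: MAIN} through the correspondence between irreducible subshifts of finite type and strongly connected GD-IFSs recalled just above the statement. First I would pass to the GD-IFS $\{S_e:e\in\mathcal{E}\}$ with vertex set $\mathcal{J}$ constructed there (\cite[Proposition 2.6]{Farkas-Fraser SSOFT}): draw an edge $e_{i,l}$ from $i$ to $l$ exactly when $A_{i,l}=1$ and set $S_{e_{i,l}}=S_i$. Irreducibility of $A$ makes this GD-IFS strongly connected, and the displayed computation in the excerpt shows that its attractor has $F_A^l$ as its $l$-th coordinate for every $l\in\mathcal{J}$; in particular the $j$-th coordinate is $F_A^j$, which plays the role of $K_j$ in Theorem~\ref{thm: MAIN}. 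Moreover, by the way it was defined (this is the content of the remark that follows its definition) the $j$-th transformation group $\mathcal{T}_{j,A}^G$ of $\Sigma_A$ coincides with the $j$-th transformation group $\mathcal{T}_{j,G}$ of this GD-IFS: an admissible word whose first letter is a successor of $j$ is completed, using irreducibility, to a directed cycle through $j$, and conversely, while the transformation attached to such a cycle is the corresponding product $T_\alpha$ because the edge leaving $j$ carries $T_j$.

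Once this dictionary is in place I would simply invoke Theorem~\ref{thm: MAIN} for the GD-IFS $\{S_e:e\in\mathcal{E}\}$ at the vertex $j$ and the given $\varepsilon>0$. It produces an SS-IFS $\{S_i\}_{i=1}^m$ satisfying the SSC whose attractor $K$ satisfies $K\subseteq K_j=F_A^j$, $\dim_H F_A^j-\varepsilon<\dim_H K$, and whose transformation group $\mathcal{T}$ is dense in $\mathcal{T}_{j,G}=\mathcal{T}_{j,A}^G$. Since $\Sigma_A$ is irreducible, $\dim_H F_A^j=\dim_H F_A$ (as noted in the excerpt), so $\dim_H F_A-\varepsilon<\dim_H K$, and $F_A^j\subseteq F_A$ is immediate from the definitions. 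This gives all the assertions of the theorem.

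The only point that is more than routine bookkeeping is verifying carefully that $\mathcal{T}_{j,A}^G$, as it is defined for the subshift, really is the transformation group $\mathcal{T}_{j,G}$ of the associated GD-IFS — that is, that generating a group from the transformations of admissible words whose first letter is a successor of $j$ yields the same group as generating it from the return-cycle transformations defining $\mathcal{T}_{j,G}$. Irreducibility of $A$ is exactly what reconciles these two descriptions, since it allows any such admissible word to be completed to a cycle through $j$. All remaining steps are a transcription through \cite[Proposition 2.6]{Farkas-Fraser SSOFT} together with the dimension identity $\dim_H F_A=\dim_H F_A^j$, so the proof is brief.
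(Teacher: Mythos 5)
Your proposal is correct and is exactly the argument the paper intends: the theorem is stated as a "reformulation" of Theorem \ref{thm: MAIN} obtained by passing to the strongly connected GD-IFS associated to the irreducible subshift (via \cite[Proposition 2.6]{Farkas-Fraser SSOFT}), whose $j$-th attractor component is $F_A^j$ and whose $j$-th transformation group is $\mathcal{T}_{j,A}^{G}$, and then using $\dim_H F_A=\dim_H F_A^j$. You are in fact slightly more careful than the paper, which omits the explicit check that the group generated by admissible-word transformations coincides with the return-cycle group $\mathcal{T}_{j,G}$; your use of irreducibility to complete words to cycles is the right way to close that point.
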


We can also state the subshift of finite type analogue of Theorem
\ref{thm:approx trafo thm}.

\begin{thm}
Let $\Sigma_{A}$ be an irreducible subshift of finite type and $j\in\mathcal{J}$.
Then for every $\varepsilon>0$ and $O\in\overline{\mathcal{T}_{j,A}^{G}}$
there exist $r\in(0,1)$ and an SS-IFS $\left\{ S_{i}\right\} _{i=1}^{m}$
that satisfies the SSC, with attractor $K$ such that $K\subseteq F_{A}^{j}\subseteq F_{A}$,
$\dim_{H}F_{A}-\varepsilon<\dim_{H}K$, and $\left\Vert T_{i}-O\right\Vert <\varepsilon$,
$r_{i}=r$ for every $i\in\left\{ 1,\dots,m\right\} $.
\end{thm}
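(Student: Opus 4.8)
The plan is to reduce the statement to Theorem \ref{thm:approx trafo thm} by invoking the correspondence between irreducible subshifts of finite type and strongly connected GD-IFSs that was recalled just above the statement (following \cite[Proposition 2.6]{Farkas-Fraser SSOFT}). Concretely, given the irreducible subshift $\Sigma_{A}$ with alphabet $\mathcal{J}=\{0,1,\dots,M-1\}$ and similarities $\{S_{i}\}_{i\in\mathcal{J}}$, I would form the directed graph $G(\mathcal{J},\mathcal{E})$ with an edge $e_{i,j}$ from $i$ to $j$ whenever $A_{i,j}=1$, and set $S_{e_{i,j}}=S_{i}$. The irreducibility of $A$ makes $G$ strongly connected, and by construction the attractor of this GD-IFS is exactly $(F_{A}^{0},\dots,F_{A}^{M-1})$, so in particular $F_{A}^{j}$ is the $j$-th coordinate of a strongly connected graph directed attractor.

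The next step is to match up the two notions of transformation group. By the construction above, the $j$-th transformation group $\mathcal{T}_{j,A}^{G}$ of $\Sigma_{A}$ — generated by the $T_{\alpha}=T_{\alpha_{0}}\circ\dots\circ T_{\alpha_{k-1}}$ over admissible blocks $\alpha$ with $A_{j,\alpha_{0}}=1$ — coincides with the $j$-th transformation group $\mathcal{T}_{j,G}$ of the associated GD-IFS, because the orthogonal part of $S_{e_{i_{0},i_{1}}*\dots*e_{i_{k-1},j}}$ is precisely $T_{i_{0}}\circ\dots\circ T_{i_{k-1}}$ and the cycles in $\mathcal{C}_{j}$ are in bijection with the admissible blocks returning to $j$. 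Hence $\overline{\mathcal{T}_{j,A}^{G}}=\overline{\mathcal{T}_{j,G}}$, and any $O\in\overline{\mathcal{T}_{j,A}^{G}}$ is a legitimate input to Theorem \ref{thm:approx trafo thm}.

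Now I would simply apply Theorem \ref{thm:approx trafo thm} to this GD-IFS at vertex $j$, with the given $\varepsilon>0$ and $O$: it produces $r\in(0,1)$ and an SS-IFS $\{S_{i}\}_{i=1}^{m}$ satisfying the SSC with attractor $K$ such that $K\subseteq F_{A}^{j}$, $\dim_{H}F_{A}^{j}-\varepsilon<\dim_{H}K$, and $\|T_{i}-O\|<\varepsilon$, $r_{i}=r$ for all $i$. Finally, since $\Sigma_{A}$ is irreducible we have $\dim_{H}F_{A}=\dim_{H}F_{A}^{j}$ (as noted in the text), so $\dim_{H}F_{A}-\varepsilon<\dim_{H}K$, and $K\subseteq F_{A}^{j}\subseteq F_{A}$; this is exactly the assertion.

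\textbf{Main obstacle.} There is essentially no analytic difficulty here — the content was already carried by Theorem \ref{thm:approx trafo thm}. The only point requiring care is the bookkeeping in identifying $\mathcal{T}_{j,A}^{G}$ with $\mathcal{T}_{j,G}$: one must check that the indexing conventions (the edge $e_{i,j}$ carrying the map $S_{i}$, so that the orthogonal part attached to a path depends on the \emph{initial} symbols rather than terminal ones, together with the constraint $A_{j,\alpha_0}=1$ on admissible blocks) really yield the same generated group, using $T_{\alpha_0}=T_j$ when $A_{j,\alpha_0}=1$. Once that identification is in place, the proof is a one-line appeal to Theorem \ref{thm:approx trafo thm}.
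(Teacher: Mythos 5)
Your reduction is exactly the paper's intended argument: the paper gives no separate proof of this theorem, presenting it as a direct reformulation of Theorem \ref{thm:approx trafo thm} via the GD-IFS $G(\mathcal{J},\mathcal{E})$ with $S_{e_{i,j}}=S_{i}$ constructed just before the statement, with $\mathcal{T}_{j,A}^{G}$ defined precisely so as to coincide with $\mathcal{T}_{j,G}$ and with $\dim_{H}F_{A}=\dim_{H}F_{A}^{j}$ supplied by irreducibility. The bookkeeping point you flag (matching the group generated by admissible blocks with the group generated by cycle maps, using $T_{e_{j,\alpha_{0}}}=T_{j}$) is indeed the only content, and you handle it as the paper does.
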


As in the case of Corollary \ref{cor:finite approx cor} for graph
directed attractors we can conclude the following for subshifts of
finite type.

\begin{cor}
Let $\Sigma_{A}$ be an irreducible subshift of finite type, let $j\in\mathcal{J}$
and assume that $\mathcal{T}_{j,A}^{G}$ is a finite group. Then for
every $\varepsilon>0$ and $O\in\mathcal{T}_{j,A}^{G}$ there exist
$r\in(0,1)$ and an SS-IFS $\left\{ S_{i}\right\} _{i=1}^{m}$ that
satisfies the SSC, with attractor $K$ such that $K\subseteq F_{A}^{j}\subseteq F_{A}$,
$\dim_{H}F_{A}-\varepsilon<\dim_{H}K$, and $T_{i}=O$, $r_{i}=r$
for every $i\in\left\{ 1,\dots,m\right\} $.
\end{cor}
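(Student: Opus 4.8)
The plan is to deduce this corollary either from the preceding theorem (the subshift of finite type analogue of Theorem~\ref{thm:approx trafo thm}) in exactly the way Corollary~\ref{cor:finite approx cor} is deduced from Theorem~\ref{thm:approx trafo thm}, or, even more directly, from Corollary~\ref{cor:finite approx cor} itself via the correspondence between irreducible subshifts of finite type and strongly connected GD-IFS recalled above. Taking the latter route, one observes that since $A$ is irreducible the graph $G(\mathcal{J},\mathcal{E})$ constructed above is strongly connected, so $\left\{S_{e}:e\in\mathcal{E}\right\}$ is a strongly connected GD-IFS whose attractor has $F_{A}^{j}$ among its components, its $j$-th transformation group is precisely $\mathcal{T}_{j,A}^{G}$ and hence finite by hypothesis, and $\dim_{H}F_{A}=\dim_{H}F_{A}^{j}$ by irreducibility. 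Applying Corollary~\ref{cor:finite approx cor} to this GD-IFS with the vertex $j$, the given $O\in\mathcal{T}_{j,A}^{G}$ and the given $\varepsilon>0$ yields $r\in(0,1)$ and an SS-IFS $\left\{S_{i}\right\}_{i=1}^{m}$ satisfying the SSC, with attractor $K$ such that $K\subseteq F_{A}^{j}$, $\dim_{H}F_{A}^{j}-\varepsilon<\dim_{H}K$, and $T_{i}=O$, $r_{i}=r$ for every $i$. As $K\subseteq F_{A}^{j}\subseteq F_{A}$ and $\dim_{H}F_{A}^{j}=\dim_{H}F_{A}$, this is exactly the assertion of the corollary.

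Alternatively, arguing straight from the preceding theorem: put $\varepsilon_{0}=\min\left\{\left\Vert T-O'\right\Vert:T,O'\in\mathcal{T}_{j,A}^{G},\,T\neq O'\right\}$, which is strictly positive because $\mathcal{T}_{j,A}^{G}$ is a finite group, apply that theorem with $\min\{\varepsilon,\varepsilon_{0}\}$ in place of $\varepsilon$ to obtain an SS-IFS with $\left\Vert T_{i}-O\right\Vert<\varepsilon_{0}$ for every $i$, and note that every such $T_{i}$ already lies in $\mathcal{T}_{j,A}^{G}$, the approximating SS-IFS being built from the maps attached to directed cycles based at $j$, whose orthogonal parts are products of the $T_{e}$ along such cycles. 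Then $\left\Vert T_{i}-O\right\Vert<\varepsilon_{0}$ together with the definition of $\varepsilon_{0}$ forces $T_{i}=O$.

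There is essentially no obstacle here: both routes amount to a routine substitution into results already established. The only point deserving a moment's attention is that the $T_{i}$ produced by the approximation theorem genuinely belong to $\mathcal{T}_{j,A}^{G}$ rather than merely to its closure; but this is immediate from the construction, which uses only finite cycles based at the vertex $j$, and then the finiteness of the group pins each $T_{i}$ to the prescribed value $O$.
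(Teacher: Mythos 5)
Your proposal is correct and matches the paper's intended argument: the paper deduces this corollary from the preceding subshift-of-finite-type analogue of Theorem \ref{thm:approx trafo thm} exactly as Corollary \ref{cor:finite approx cor} is deduced from Theorem \ref{thm:approx trafo thm}, i.e.\ via the positive minimum distance in the finite group $\mathcal{T}_{j,A}^{G}$, which is your second route. Your first route, passing directly through the strongly connected GD-IFS associated to the irreducible matrix $A$ and invoking Corollary \ref{cor:finite approx cor}, is an equally valid shortcut using the correspondence the paper itself sets up.
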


In the plane we can formulate the subshift of finite type analogue
of Theorem \ref{thm:planar dense thm}.

\begin{thm}
Let $\Sigma_{A}$ be an irreducible subshift of finite type in $\mathbb{R}^{2}$
and let $j\in\mathcal{J}$. Then for every $\varepsilon>0$ there
exist $r\in(0,1)$, an orthogonal transformation $O\in\mathcal{T}_{j,A}^{G}\cap\mathbb{SO}_{2}$
and an SS-IFS $\left\{ S_{i}\right\} _{i=1}^{m}$ that satisfies the
SSC, with attractor $K$ such that $K\subseteq F_{A}^{j}\subseteq F_{A}$,
$\dim_{H}F_{A}-\varepsilon<\dim_{H}K$, the transformation group $\mathcal{T}$
of $\left\{ S_{i}\right\} _{i=1}^{m}$ is dense in $\mathcal{T}_{j,A}^{G}\cap\mathbb{SO}_{2}$
and $T_{i}=O$, $r_{i}=r$ for every $i\in\left\{ 1,\dots,m\right\} $.
\end{thm}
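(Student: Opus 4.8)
The plan is to reduce the statement to the graph directed attractor case already treated in Theorem \ref{thm:planar dense thm}, exploiting the translation between irreducible subshifts of finite type and strongly connected GD-IFSs that was spelled out just above. Concretely, given the irreducible subshift $\Sigma_A$ on the alphabet $\mathcal J=\{0,1,\dots,M-1\}$ with similarities $\{S_i\}_{i\in\mathcal J}$ on $\mathbb R^2$, I would form the directed graph $G(\mathcal J,\mathcal E)$ with an edge $e_{i,j}$ whenever $A_{i,j}=1$, set $S_{e_{i,j}}=S_i$, and invoke \cite[Proposition 2.6]{Farkas-Fraser SSOFT} to conclude that $(F_A^0,\dots,F_A^{M-1})$ is the attractor of this strongly connected GD-IFS. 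Since the defining similarities of this GD-IFS are precisely $S_i=r_iT_i(\cdot)+v_i$, and the cycles in $\mathcal C_j$ are exactly the admissible words starting and ending compatibly with $j$, one checks directly that the $j$-th transformation group $\mathcal T_{j,G}$ of the GD-IFS coincides with $\mathcal T_{j,A}^G$ as defined above.

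With this identification in place, the theorem is almost immediate. I would apply Theorem \ref{thm:planar dense thm} to the GD-IFS $\{S_e:e\in\mathcal E\}$ at the vertex $j$ with the given $\varepsilon>0$: this produces $r\in(0,1)$, an orthogonal transformation $O\in\mathcal T_{j,G}\cap\mathbb{SO}_2=\mathcal T_{j,A}^G\cap\mathbb{SO}_2$, and an SS-IFS $\{S_i\}_{i=1}^m$ satisfying the SSC with attractor $K\subseteq K_j=F_A^j$, such that $\dim_H F_A^j-\varepsilon<\dim_H K$, the transformation group $\mathcal T$ of $\{S_i\}_{i=1}^m$ is dense in $\mathcal T_{j,G}\cap\mathbb{SO}_2$, and $T_i=O$, $r_i=r$ for all $i$. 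Finally, since $\Sigma_A$ is irreducible we noted above that $\dim_H F_A=\dim_H F_A^j$, so $\dim_H F_A-\varepsilon<\dim_H K$, and the inclusion $F_A^j\subseteq F_A$ gives $K\subseteq F_A^j\subseteq F_A$. This is exactly the assertion of the theorem.

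The only point requiring a little care — and the one I would state as a short lemma or simply verify inline — is the equality $\mathcal T_{j,G}=\mathcal T_{j,A}^G$, i.e. that the group generated by $\{T_\alpha:\alpha$ admissible, $A_{j,\alpha_0}=1\}$ agrees with the group generated by $\{T_{e_1}\circ\dots\circ T_{e_k}:(e_1,\dots,e_k)\in\mathcal C_j\}$. Under the edge assignment $S_{e_{i,j}}=S_i$ we have $T_{e_{i,j}}=T_i$, so a directed cycle $(e_1,\dots,e_k)\in\mathcal C_j$ based at $j$ corresponds to an admissible word $(\alpha_0,\dots,\alpha_{k-1})$ with $A_{j,\alpha_0}=1$ and $A_{\alpha_{k-1},j}=1$, and its transformation $T_{e_1}\circ\dots\circ T_{e_k}$ equals $T_{\alpha_0}\circ\dots\circ T_{\alpha_{k-1}}=T_\alpha$; conversely any such admissible word can be completed to a cycle through $j$ using irreducibility of $A$, and appending the corresponding transformations does not change the generated group since those extra generators are themselves of the required form. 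Beyond this bookkeeping there is genuinely nothing to do: the entire analytic content lives in Theorem \ref{thm:planar dense thm}, so the ``main obstacle'' is merely checking that the dictionary between the two settings preserves every piece of data appearing in the statement (SSC, the inclusion $K\subseteq F_A^j$, the dimension gap, density in $\mathcal T_{j,A}^G\cap\mathbb{SO}_2$, and the rigidity $T_i=O$, $r_i=r$), which it plainly does.
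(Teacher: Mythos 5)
Your reduction is exactly the intended one: the paper states this theorem as the subshift-of-finite-type reformulation of Theorem \ref{thm:planar dense thm}, obtained by passing to the strongly connected GD-IFS with edges $e_{i,l}$ (for $A_{i,l}=1$) carrying $S_{e_{i,l}}=S_{i}$ and attractor $(F_{A}^{0},\dots,F_{A}^{M-1})$, and it gives no separate argument. Everything in your write-up except the identification of the transformation groups is indeed pure bookkeeping and is handled correctly.

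The one step you single out as needing care is, however, not established by the argument you give. With the convention $S_{e_{i,l}}=S_{i}$, a $k$-edge cycle at $j$ visiting $j=\alpha_{0},\alpha_{1},\dots,\alpha_{k-1},j$ has $T_{e_{1}}\circ\dots\circ T_{e_{k}}=T_{\alpha_{0}}\circ\dots\circ T_{\alpha_{k-1}}$ with $\alpha_{0}=j$ and $A_{\alpha_{k-1},j}=1$, so the generators of $\mathcal{T}_{j,G}$ are the $T_{\alpha}$ for admissible words that both leave from and return to $j$. The generating set displayed in the paper's definition of $\mathcal{T}_{j,A}^{G}$ carries no return condition, and your claim that completing such a word to a cycle ``does not change the generated group since those extra generators are themselves of the required form'' fails: the completing path $\beta$ is based at the endpoint of $\alpha$, not at $j$, so $T_{\beta}$ is not itself a generator of $\mathcal{T}_{j,G}$, and the two groups can genuinely differ. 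For example, take $\mathcal{J}=\left\{ 1,2,3\right\} $ with $A_{1,2}=A_{1,3}=A_{2,1}=A_{3,1}=1$ and all other entries $0$, let $T_{1}$ be a rotation by an irrational multiple of $\pi$ and $T_{2}=T_{3}=T_{1}^{-1}$: every cycle at vertex $1$ has trivial orthogonal part, so $\mathcal{T}_{1,G}$ is trivial, while the group generated by all admissible words leaving $1$ contains $T_{1}$ and is dense in $\mathbb{SO}_{2}$; density in the latter is unattainable by the cylinder-map construction underlying Theorem \ref{thm:planar dense thm}. The resolution is that $\mathcal{T}_{j,A}^{G}$ must be read as the group generated by the closed-cycle transformations $\left\{ T_{\boldsymbol{\mathbf{e}}}:\boldsymbol{\mathbf{e}}\in\mathcal{C}_{j}\right\} $ of the associated GD-IFS --- which is clearly the paper's intent, as the definition is introduced with ``hence we define'' immediately after the description of $\mathcal{C}_{j}$ --- and with that reading the equality $\mathcal{T}_{j,G}=\mathcal{T}_{j,A}^{G}$ holds essentially by definition and your reduction is complete. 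You should either state this reading explicitly or prove the equality for the corrected generating set (admissible words from $j$ back to $j$) rather than for the set as literally displayed.
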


As a consequence we can restate every result of Section \ref{subsec:Application GDA}
for subshifts of finite type and the proofs proceed similarly. We
omit the restatement of those results as they are very similar to
those in Section \ref{subsec:Application GDA}. However, we note that
Fraser and Pollicott \cite[Theorem 2.10]{Fraser-Pollicott SSOFT}
proved the subshift of finite type version of Theorem \ref{thm:hoch-shmer}
for systems satisfying the `strong separation property'. In \cite[Theorem 2.7]{Fraser-Pollicott SSOFT}
they also proved the subshift of finite type version of Theorem \ref{thm:distance set R2}
for the even more general case of conformal systems rather then similarities.

\section{Proof of the approximation theorems\label{sec:Proofs}}

We prove our main approximation theorems in this section
\selectlanguage{english}%
\begin{prop}
\label{lem:Jordanos lem}If $T\in\mathbb{O}_{d}$ then for all $N\in\mathbb{N}$
there exists $k\in\mathbb{N}$, $k\geq N$, such that the group generated
by $T^{k}$ is dense in the group generated by $T$.
\end{prop}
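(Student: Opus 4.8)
The plan is to analyze the structure of the closure $\overline{\langle T\rangle}$ of the cyclic group generated by a single orthogonal transformation $T\in\mathbb{O}_d$, and to show that a suitable power $T^k$ generates the same closure. The key fact is that $\overline{\langle T\rangle}$ is a compact abelian Lie group, hence isomorphic to a product $\mathbb{T}^a\times F$ of a torus and a finite abelian group; the generator $T$ corresponds to a topological generator of this group. I would first diagonalize $T$ over $\mathbb{C}$ (or put it in real block-diagonal form): the eigenvalues are $1$, $-1$ (with some multiplicities) and conjugate pairs $e^{\pm 2\pi i\theta_1},\dots,e^{\pm 2\pi i\theta_r}$ with $\theta_s\in(0,1/2)$. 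Then $\overline{\langle T\rangle}$ is determined by the subgroup of $\mathbb{T}^r$ (coordinatized by the angles $\theta_1,\dots,\theta_r$, together with the sign on the $-1$-eigenspace, an element of $\mathbb{Z}/2$) generated by the single point $(\theta_1,\dots,\theta_r)$ (and the sign).

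The core step is then purely about cyclic subgroups of a compact abelian group $H=\mathbb{T}^a\times F$: if $h\in H$ is such that $\langle h\rangle$ is dense, I claim that for every $N$ there is $k\geq N$ with $\langle h^k\rangle=\overline{\langle h\rangle}=H$. Write $|F|=n$. For the torus part, density of $\langle(\theta_1,\dots,\theta_a)\rangle$ in $\mathbb{T}^a$ is equivalent to $1,\theta_1,\dots,\theta_a$ being linearly independent over $\mathbb{Q}$ (Weyl); this property is preserved by multiplying all $\theta_s$ by any nonzero integer $k$, so $\langle h^k\rangle$ is again dense in $\mathbb{T}^a$. For the finite part $F$, I need $k$ coprime to the order of the projection of $h$ to $F$, so that $h^k$ still generates that cyclic subgroup of $F$. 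Combining: choose any $k\geq N$ with $\gcd(k,n)=1$ (e.g. a prime $k\geq \max(N,n)$); then $h^k$ generates a dense subgroup of $\mathbb{T}^a$ and the full cyclic projection in $F$, and since these two pieces are ``independent'' in the sense that the closure of $\langle h\rangle$ surjects onto both, a short argument (using that $\overline{\langle h^k\rangle}$ is a closed subgroup containing enough elements) gives $\overline{\langle h^k\rangle}=\overline{\langle h\rangle}$.

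The one subtlety I would be careful about is that $\overline{\langle h\rangle}$ need not be all of $\mathbb{T}^a\times F$ for the ambient coordinates I chose — there can be ``diagonal'' relations, e.g. $T$ could act as a rotation by $\theta$ on one plane and by $\theta$ again (or $2\theta$, etc.) on another, so that the closure is a lower-dimensional subtorus together with a cyclic group. The clean way to handle this is to work intrinsically: let $G=\overline{\langle T\rangle}$, a compact abelian Lie group with identity component $G_0\cong\mathbb{T}^a$ and component group $G/G_0$ finite, say of order $n$. The element $T$ is a topological generator of $G$, so its image generates $G/G_0$, forcing $G/G_0$ cyclic of order $n$. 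Pick a prime $k\geq\max(N,n+1)$. Then $T^k$ still generates $G/G_0$ (as $\gcd(k,n)=1$), so $\overline{\langle T^k\rangle}$ surjects onto $G/G_0$; and $\overline{\langle T^k\rangle}\cap G_0$ is a closed subgroup of $G_0$ containing $T^{kn}$, which topologically generates $G_0$ (because $T^n\in G_0$ is a topological generator of $G_0$ — this follows since $\langle T\rangle\cap G_0 = \langle T^n\rangle$ is dense in $G_0$ — and raising a topological generator of a torus to a nonzero integer power keeps it a topological generator, by the Weyl criterion). Hence $\overline{\langle T^k\rangle}\supseteq G_0$ and surjects onto $G/G_0$, so $\overline{\langle T^k\rangle}=G$, as required.

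The main obstacle is exactly this bookkeeping about the component group versus the identity component; once one commits to arguing intrinsically with the compact abelian Lie group $G=\overline{\langle T\rangle}$ rather than with an a priori choice of eigenvalue coordinates, the rest is the elementary observations that (i) a cyclic group of order $n$ is generated by any element whose exponent is coprime to $n$, and (ii) a topological generator of a torus remains one after raising to a nonzero integer power. Choosing $k$ to be a large prime simultaneously handles both (i) the coprimality with $n$ and (ii) the lower bound $k\geq N$.
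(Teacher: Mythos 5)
Your argument is correct, and it is worth noting that the paper itself offers no proof of this Proposition at all --- it simply cites \cite[Proposition 2.2]{Farkas-Linim-only} --- so your write-up is a genuine self-contained proof rather than a variant of one given here. The intrinsic route you settle on in your third paragraph is the right one and avoids the bookkeeping pitfalls of the eigenvalue-coordinate picture you (correctly) flag: by Cartan's closed subgroup theorem $G=\overline{\langle T\rangle}$ is a compact abelian Lie group, its identity component $G_{0}$ is a torus, and $G/G_{0}$ is finite and cyclic because it is topologically (hence actually) generated by the image of $T$. Taking $k$ a prime with $k\geq\max(N,n+1)$ then does double duty exactly as you say: coprimality preserves the generator of $G/G_{0}$, and the Kronecker--Weyl criterion shows that a topological generator of a torus stays one under any nonzero integer power. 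The only step you state without justification is that $\langle T^{n}\rangle=\langle T\rangle\cap G_{0}$ is dense in $G_{0}$; this is immediate because $G_{0}$ is open in $G$ (the component group is finite and discrete) and the intersection of a dense set with an open set is dense in that open set, but you should say so. With that one sentence added, and with the degenerate cases ($T$ of finite order, or $G$ connected) observed to be subsumed by the same argument, the proof is complete.
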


\selectlanguage{british}%
See \cite[Poposition 2.2]{Farkas-Linim-only}.
\selectlanguage{english}%
\begin{lem}
\label{lem:FP change}Let $S_{1},\ldots,S_{k}:\mathbb{R}^{d}\longrightarrow\mathbb{R}^{d}$
$(k\geq2)$ be contracting similarities such that $S_{1}$ and $S_{2}$
have no common fixed point. Then there exist $F_{1},\ldots,F_{k}:\mathbb{R}^{d}\longrightarrow\mathbb{R}^{d}$
such that $F_{1}=S_{1}$, $F_{2}=S_{2}$, for each $i\in\left\{ 3,\ldots,k\right\} $
either $F_{i}=S_{1}^{k_{i}}\circ S_{i}$ or $F_{i}=S_{2}^{k_{i}}\circ S_{i}$
for some $k_{i}\in\mathbb{N}$, and $F_{i}$ and $F_{j}$ have no
common fixed point for all $i,j\in\left\{ 1,\ldots,k\right\} $, $i\neq j$.
\end{lem}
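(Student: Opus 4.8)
The plan is to build the maps $F_i$ one at a time for $i = 3, \ldots, k$, at each stage keeping the invariant that the maps constructed so far are pairwise without common fixed point, and at the end checking that each $F_i$ (for $i \geq 3$) also fails to share a fixed point with $F_1 = S_1$ and $F_2 = S_2$. The basic observation is that a contracting similarity $S$ has a unique fixed point, call it $\mathrm{fix}(S)$, and that $\mathrm{fix}(S_1^{n} \circ S_i)$ moves as $n$ varies: in fact $S_1^{n} \circ S_i \to S_1^{n}(\mathrm{pt})$ behaviour forces $\mathrm{fix}(S_1^{n} \circ S_i) \to \mathrm{fix}(S_1)$ as $n \to \infty$, and similarly $\mathrm{fix}(S_2^{n} \circ S_i) \to \mathrm{fix}(S_2)$. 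Since $\mathrm{fix}(S_1) \neq \mathrm{fix}(S_2)$ by hypothesis, for large $n$ the two candidate fixed points $\mathrm{fix}(S_1^{n} \circ S_i)$ and $\mathrm{fix}(S_2^{n'} \circ S_i)$ lie in disjoint small neighbourhoods of $\mathrm{fix}(S_1)$ and $\mathrm{fix}(S_2)$ respectively, so they are distinct; more importantly, only finitely many of these points can coincide with any fixed, prescribed point.

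Concretely, I would first record the elementary fact: if $S$ is a contracting similarity with ratio $r < 1$ and $P \in \mathbb{R}^d$, then $\mathrm{fix}(S^{n} \circ P \text{-shift})$... — more precisely, for any contracting similarity $T$, $\mathrm{fix}(S^{n} \circ T) = S^{n}\bigl(\mathrm{fix}(S^{n}\circ T)\bigr)$ forces, by iterating, that $d(\mathrm{fix}(S^{n}\circ T), \mathrm{fix}(S)) \leq r^{n} \cdot C$ for a constant $C$ depending only on $S, T$. Hence $\{\mathrm{fix}(S_1^{n}\circ S_i) : n \in \mathbb{N}\}$ accumulates only at $\mathrm{fix}(S_1)$, so any given point $y \neq \mathrm{fix}(S_1)$ equals $\mathrm{fix}(S_1^{n}\circ S_i)$ for at most finitely many $n$; and $\mathrm{fix}(S_1)$ itself is never such a fixed point once $n$ is large, because $S_1^{n}\circ S_i$ fixing $\mathrm{fix}(S_1)$ would force $S_i(\mathrm{fix}(S_1)) = \mathrm{fix}(S_1)$, a single bad value of $i$ that we simply avoid by using $S_2$ instead for that $i$.

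Then I would run the induction. Suppose $F_1, \ldots, F_{i-1}$ have been chosen, pairwise without common fixed point, with their fixed points forming a finite set $\Phi_{i-1} \subseteq \mathbb{R}^d$. Since $\mathrm{fix}(S_1) \neq \mathrm{fix}(S_2)$, at least one of them — say $\mathrm{fix}(S_1)$ — is such that $S_i$ does not fix it (if $S_i$ happened to fix both then $S_1, S_2, S_i$ would all... no: if $S_i$ fixes both $\mathrm{fix}(S_1)$ and $\mathrm{fix}(S_2)$ then a similarity with two fixed points is an isometry, contradicting contraction; so $S_i$ fixes at most one of them, and we pick the other base map). Using that base map, say $S_1$, the set $\{\mathrm{fix}(S_1^{n}\circ S_i) : n \in \mathbb{N}\}$ meets the finite set $\Phi_{i-1}$ only for finitely many $n$, so we may choose $k_i$ with $F_i := S_1^{k_i}\circ S_i$ having fixed point outside $\Phi_{i-1}$; set $\Phi_i = \Phi_{i-1} \cup \{\mathrm{fix}(F_i)\}$ and continue. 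After $k-2$ steps all fixed points are distinct, which is exactly the conclusion.

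The main obstacle is the bookkeeping in the induction: one must make sure that when choosing $F_i$ one avoids \emph{all} previously chosen fixed points (including $\mathrm{fix}(S_1), \mathrm{fix}(S_2)$) simultaneously, and that the choice of which base map ($S_1$ or $S_2$) to compose with is legitimate — i.e., that $S_i$ cannot fix both $\mathrm{fix}(S_1)$ and $\mathrm{fix}(S_2)$. This last point is where the contraction hypothesis is essential: a similarity fixing two distinct points has ratio $1$. Everything else is a routine consequence of the geometric-series estimate $d(\mathrm{fix}(S^{n}\circ T), \mathrm{fix}(S)) \to 0$.
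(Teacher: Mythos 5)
Your argument is correct. Note that the paper itself does not prove this lemma --- it simply cites Lemma 7.2 of \cite{Farkas-Linim-only} --- and your proof is essentially the standard one used there: the contraction estimate $d(\mathrm{fix}(S_1^{n}\circ S_i),\mathrm{fix}(S_1))\leq r_1^{n}\,d(S_i(\mathrm{fix}(S_1)),\mathrm{fix}(S_1))/(1-r_1^{n}r_i)\to 0$ shows the candidate fixed points accumulate only at $\mathrm{fix}(S_1)$, the observation that a contracting similarity cannot fix both $\mathrm{fix}(S_1)$ and $\mathrm{fix}(S_2)$ legitimises the choice of base map, and the finite-avoidance induction does the rest. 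All the key points (including the degenerate case $S_i(\mathrm{fix}(S_1))=\mathrm{fix}(S_1)$, where $\mathrm{fix}(S_1^{n}\circ S_i)=\mathrm{fix}(S_1)$ for every $n$ and one must switch to $S_2$) are handled correctly.
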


\selectlanguage{british}%
See \cite[Lemma 7.2]{Farkas-Linim-only}.

\begin{lem}
\label{lem:dense piece lem}Let $\left\{ S_{e}:e\in\mathcal{E}\right\} $
be a strongly connected GD-IFS in $\mathbb{R}^{d}$ with attractor
$\left(K_{1},\ldots,K_{q}\right)$ and let $j\in\mathcal{V}$. Then
\[
\bigcup\left\{ K_{\boldsymbol{\mathbf{e}}}:\boldsymbol{\mathbf{e}}\in\mathcal{C}_{j},\mathrm{diam}(K_{\boldsymbol{\mathbf{e}}})<\gamma\right\} 
\]
is dense in $K_{j}$ for every $\gamma>0$.
\end{lem}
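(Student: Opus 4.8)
The plan is to exploit strong connectivity to show that, starting from any vertex $j$, one can return to $j$ through \emph{any} prescribed vertex, and hence that the union of cylinders $K_{\boldsymbol{\mathbf{e}}}$ with $\boldsymbol{\mathbf{e}}\in\mathcal{C}_j$ already ``sees'' every point of $K_j$; then one subdivides these cylinders until they are small. More precisely, fix $x\in K_j$ and $\gamma>0$; I want a cycle $\boldsymbol{\mathbf{e}}\in\mathcal{C}_j$ with $\mathrm{diam}(K_{\boldsymbol{\mathbf{e}}})<\gamma$ and $K_{\boldsymbol{\mathbf{e}}}$ close to $x$.

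First I would unfold the defining relation \eqref{eq:GDAdef} iteratively: for every $k$,
\[
K_j=\bigcup_{l\in\mathcal{V}}\ \bigcup_{\boldsymbol{\mathbf{e}}\in\mathcal{E}_{j,l}^{k}} K_{\boldsymbol{\mathbf{e}}},
\]
where $K_{\boldsymbol{\mathbf{e}}}=S_{\boldsymbol{\mathbf{e}}}(K_l)$. Since each $S_e$ is a contraction with ratio $r_e<1$, setting $r_{\max}=\max_{e\in\mathcal{E}}r_e<1$ gives $\mathrm{diam}(K_{\boldsymbol{\mathbf{e}}})\le r_{\max}^{k}\max_l\mathrm{diam}(K_l)\to 0$ as $k\to\infty$. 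So, given $\gamma$, choose $k_0$ with $r_{\max}^{k_0}\max_l\mathrm{diam}(K_l)<\gamma$. For our fixed $x\in K_j$, pick $l\in\mathcal{V}$ and $\boldsymbol{\mathbf{e}}\in\mathcal{E}_{j,l}^{k}$ with $x\in K_{\boldsymbol{\mathbf{e}}}$ for a large $k\ge k_0$ to be specified; this is a path from $j$ to $l$, not yet a cycle.

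Now I close it up using strong connectivity: there is a directed path $\boldsymbol{\mathbf{f}}\in\mathcal{E}_{l,j}^{n}$ for some $n\ge 1$. Then $\boldsymbol{\mathbf{e}}*\boldsymbol{\mathbf{f}}\in\mathcal{C}_j$, and $K_{\boldsymbol{\mathbf{e}}*\boldsymbol{\mathbf{f}}}=S_{\boldsymbol{\mathbf{e}}}(S_{\boldsymbol{\mathbf{f}}}(K_j))\subseteq S_{\boldsymbol{\mathbf{e}}}(K_l)=K_{\boldsymbol{\mathbf{e}}}$, so in particular $\mathrm{diam}(K_{\boldsymbol{\mathbf{e}}*\boldsymbol{\mathbf{f}}})\le\mathrm{diam}(K_{\boldsymbol{\mathbf{e}}})<\gamma$ provided $k\ge k_0$. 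The only remaining issue is that $x$ need not lie in $K_{\boldsymbol{\mathbf{e}}*\boldsymbol{\mathbf{f}}}$, only in $K_{\boldsymbol{\mathbf{e}}}$; but since $K_{\boldsymbol{\mathbf{e}}*\boldsymbol{\mathbf{f}}}\subseteq K_{\boldsymbol{\mathbf{e}}}$ and $\mathrm{diam}(K_{\boldsymbol{\mathbf{e}}})<\gamma$, the nonempty set $K_{\boldsymbol{\mathbf{e}}*\boldsymbol{\mathbf{f}}}$ lies within distance $\gamma$ of $x$. Since $\gamma>0$ was arbitrary, this shows every $x\in K_j$ is a limit of points of $\bigcup\{K_{\boldsymbol{\mathbf{e}}}:\boldsymbol{\mathbf{e}}\in\mathcal{C}_j,\ \mathrm{diam}(K_{\boldsymbol{\mathbf{e}}})<\gamma\}$ for each fixed $\gamma$, which is exactly the claimed density. (One small point to record: strong connectivity plus the global non-degeneracy assumption guarantees each $K_l\neq\emptyset$, so all the cylinders above are genuinely nonempty.)

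The main ``obstacle'' is essentially bookkeeping: making sure the return path $\boldsymbol{\mathbf{f}}$ exists (strong connectivity, directly) and that appending it only shrinks the cylinder — which is immediate from $S_{\boldsymbol{\mathbf{f}}}(K_j)\subseteq K_l$ and applying the similarity $S_{\boldsymbol{\mathbf{e}}}$. There is no genuine difficulty; the statement is a soft consequence of the contraction property and strong connectivity. I would also note for later use that the argument in fact shows the stronger fact that for every $\gamma>0$ and every $\boldsymbol{\mathbf{e}}$ realizing a point near $x$ we may take the cycle to begin with $\boldsymbol{\mathbf{e}}$, which is the form in which the lemma gets applied in the proofs of the approximation theorems.
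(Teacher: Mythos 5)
Your proposal is correct and follows essentially the same route as the paper: cover $K_j$ by level-$k$ cylinders small enough (contraction gives $\mathrm{diam}\to 0$), pick one containing $x$, and close it into a cycle via a return path supplied by strong connectivity, noting that appending the return path only shrinks the cylinder. The paper makes the "arbitrarily close" step explicit by introducing an auxiliary $\delta$ with $0<\delta<\gamma$, which is exactly the role played by your "large $k\ge k_0$ to be specified."
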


\begin{proof}
Fix $\gamma>0$. Let $x\in K_{j}$ and $\gamma>\delta>0$ arbitrary
and we find a point $y\in\bigcup\left\{ K_{\boldsymbol{\mathbf{e}}}:\boldsymbol{\mathbf{e}}\in\mathcal{C}_{j},\mathrm{diam}(K_{\boldsymbol{\mathbf{e}}})<\gamma\right\} $
which is $\delta$-close to $x$. For every $n$ we have that $\bigcup_{i=1}^{q}\bigcup_{\boldsymbol{\mathbf{f}}\in\mathcal{E}_{j,i}^{n}}K_{\boldsymbol{\mathbf{f}}}$
is a cover of $K_{j}$. For $n$ large enough $\mathrm{diam}(K_{\boldsymbol{\mathbf{f}}})<\delta<\gamma$
for every $\boldsymbol{\mathbf{f}}\in\bigcup_{i=1}^{q}\mathcal{E}_{j,i}^{n}$.
Let $i\in\mathcal{V}$ and $\boldsymbol{\mathbf{f}}_{1}\in\mathcal{E}_{j,i}^{n}$
be such that $x\in K_{\boldsymbol{\mathbf{f}}_{1}}$. It follows that
$\left\Vert x-y\right\Vert <\delta$ for every $y\in K_{\boldsymbol{\mathbf{f}}_{1}}$.
Since $\left\{ S_{e}:e\in\mathcal{E}\right\} $ is strongly connected
$\bigcup_{k=1}^{\infty}\mathcal{E}_{i,j}^{k}\neq\emptyset$, so let
$\boldsymbol{\mathbf{f}}_{2}\in\bigcup_{1=N}^{\infty}\mathcal{E}_{i,j}^{k}$.
Then $\boldsymbol{\mathbf{e}}=\boldsymbol{\mathbf{f}}_{1}*\boldsymbol{\mathbf{f}}_{2}\in\mathcal{C}_{j}$
and $K_{\boldsymbol{\mathbf{e}}}\subseteq K_{\boldsymbol{\mathbf{f}}}$,
thus $\mathrm{diam}(K_{\boldsymbol{\mathbf{e}}})<\gamma$ while $\left\Vert x-y\right\Vert <\delta$
for every $y\in K_{\boldsymbol{\mathbf{e}}}$.
\end{proof}

\begin{lem}
\label{lem: non singleton lem}Let $\left\{ S_{e}:e\in\mathcal{E}\right\} $
be a strongly connected GD-IFS in $\mathbb{R}^{d}$ with attractor
$\left(K_{1},\ldots,K_{q}\right)$. If \foreignlanguage{english}{$K_{j}$
contains at least two points for some $j\in\mathcal{V}$ then there
exist $\boldsymbol{\mathbf{e}},\boldsymbol{\mathbf{f}}\in\mathcal{C}_{j}$
such that $S_{\boldsymbol{\mathbf{e}}}$ and $S_{\boldsymbol{\mathbf{f}}}$
have no common fixed point.}
\end{lem}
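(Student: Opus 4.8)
The plan is to argue by contradiction, showing that if no such pair exists then $K_{j}$ must be a single point. Recall that a contracting similarity has a unique fixed point; hence if every pair $S_{\boldsymbol{\mathbf{e}}},S_{\boldsymbol{\mathbf{f}}}$ with $\boldsymbol{\mathbf{e}},\boldsymbol{\mathbf{f}}\in\mathcal{C}_{j}$ had a common fixed point, then all the maps $\left\{ S_{\boldsymbol{\mathbf{e}}}:\boldsymbol{\mathbf{e}}\in\mathcal{C}_{j}\right\} $ would in fact share one single fixed point $p$. Since each $S_{\boldsymbol{\mathbf{e}}}$ maps the compact set $K_{j}$ into itself, its fixed point lies in $K_{j}$, so $p\in K_{j}$. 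I would then show this forces $K_{j}=\left\{ p\right\} $, contradicting the hypothesis that $K_{j}$ contains at least two points.

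To produce the contradiction, fix $x\in K_{j}$ with $x\neq p$. Iterating the defining relation (\ref{eq:GDAdef}) $n$ times gives, for every $n$, the cover $K_{j}=\bigcup_{i\in\mathcal{V}}\bigcup_{\boldsymbol{\mathbf{g}}\in\mathcal{E}_{j,i}^{n}}K_{\boldsymbol{\mathbf{g}}}$, so one may choose $i_{n}\in\mathcal{V}$ and $\boldsymbol{\mathbf{g}}_{n}\in\mathcal{E}_{j,i_{n}}^{n}$ with $x\in K_{\boldsymbol{\mathbf{g}}_{n}}$. Writing $c=\max_{e\in\mathcal{E}}r_{e}<1$ and $D=\max_{i\in\mathcal{V}}\mathrm{diam}(K_{i})<\infty$, we get the uniform bound $\mathrm{diam}(K_{\boldsymbol{\mathbf{g}}_{n}})=r_{\boldsymbol{\mathbf{g}}_{n}}\,\mathrm{diam}(K_{i_{n}})\leq c^{\,n}D\to0$. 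Now invoke strong connectivity: for each vertex $i\in\mathcal{V}$ fix once and for all a path $\boldsymbol{\mathbf{h}}_{i}\in\bigcup_{k\geq1}\mathcal{E}_{i,j}^{k}$ (nonempty since the GD-IFS is strongly connected). Then $\boldsymbol{\mathbf{g}}_{n}*\boldsymbol{\mathbf{h}}_{i_{n}}\in\mathcal{C}_{j}$, so $S_{\boldsymbol{\mathbf{g}}_{n}*\boldsymbol{\mathbf{h}}_{i_{n}}}$ is a contraction of $K_{j}$ with fixed point $p$; consequently $p=S_{\boldsymbol{\mathbf{g}}_{n}*\boldsymbol{\mathbf{h}}_{i_{n}}}(p)\in S_{\boldsymbol{\mathbf{g}}_{n}*\boldsymbol{\mathbf{h}}_{i_{n}}}(K_{j})=K_{\boldsymbol{\mathbf{g}}_{n}*\boldsymbol{\mathbf{h}}_{i_{n}}}\subseteq K_{\boldsymbol{\mathbf{g}}_{n}}$. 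Thus $x$ and $p$ both lie in $K_{\boldsymbol{\mathbf{g}}_{n}}$ for every $n$, while $\mathrm{diam}(K_{\boldsymbol{\mathbf{g}}_{n}})\to0$; hence $x=p$, a contradiction. Therefore some $\boldsymbol{\mathbf{e}},\boldsymbol{\mathbf{f}}\in\mathcal{C}_{j}$ have no common fixed point.

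The argument is short and I do not expect a serious obstacle; the one point worth noting is that the contraction ratios of the connecting paths $\boldsymbol{\mathbf{h}}_{i}$ need not be bounded below, but this is harmless — the convergence $\mathrm{diam}(K_{\boldsymbol{\mathbf{g}}_{n}})\to0$ comes entirely from the length-$n$ initial segment $\boldsymbol{\mathbf{g}}_{n}$, and all that is used about $\boldsymbol{\mathbf{h}}_{i_{n}}$ is the inclusion $K_{\boldsymbol{\mathbf{g}}_{n}*\boldsymbol{\mathbf{h}}_{i_{n}}}\subseteq K_{\boldsymbol{\mathbf{g}}_{n}}$. The crux of the proof is simply the observation that a hypothetical common fixed point of all cycle maps would be trapped inside the shrinking cylinders $K_{\boldsymbol{\mathbf{g}}_{n}}$ together with the arbitrarily chosen point $x\in K_{j}$.
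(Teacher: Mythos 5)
Your proof is correct and follows essentially the same route as the paper: both reduce to the observation that a common fixed point $p$ of all cycle maps lies in every cycle-cylinder $K_{\boldsymbol{\mathbf{e}}}$, and that arbitrarily small such cylinders can be found near every point of $K_{j}$ (via level-$n$ cylinders extended to cycles by strong connectivity), forcing $K_{j}=\{p\}$. The only difference is that the paper delegates the density of small cycle-cylinders to Lemma \ref{lem:dense piece lem}, whereas you reprove that fact inline.
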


\begin{proof}
Assume that $S_{\boldsymbol{\mathbf{e}}}$ have the same fixed point
$x$ for every $\boldsymbol{\mathbf{e}}\in\mathcal{C}_{j}$. Then
\[
\bigcup\left\{ K_{\boldsymbol{\mathbf{e}}}:\boldsymbol{\mathbf{e}}\in\mathcal{C}_{j},\mathrm{diam}(K_{\boldsymbol{\mathbf{e}}})<\gamma\right\} \subseteq B(x,\gamma)
\]
for all $\gamma>0$ and it follows from Lemma \ref{lem:dense piece lem}
that $K_{j}=\left\{ x\right\} $ which is a contradiction. 
\end{proof}

\begin{lem}
\label{lem:finitegen lem}Let $\left\{ S_{e}:e\in\mathcal{E}\right\} $
be a strongly connected GD-IFS in $\mathbb{R}^{d}$ and $j\in\mathcal{V}$.
Then there exists a finite set of cycles $\left\{ \boldsymbol{\mathbf{e}}_{1},\dots,\boldsymbol{\mathbf{e}}_{k}\right\} \subseteq\mathcal{C}_{j}$
such that $T_{\boldsymbol{\mathbf{e}}_{1}},\dots,T_{\boldsymbol{\mathbf{e}}_{k}}$
generate $\mathcal{T}_{i,G}$.
\end{lem}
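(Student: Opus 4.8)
The plan is to use the standard ``lasso'' argument: by definition $\mathcal{T}_{j,G}$ is generated by the (in general infinite) family $\left\{ T_{\boldsymbol{\mathbf{e}}}:\boldsymbol{\mathbf{e}}\in\mathcal{C}_{j}\right\} $, and one cuts this family down to a finite generating set by routing every cycle through a fixed system of base paths. First I would use that the graph is strongly connected to fix, for every vertex $i\in\mathcal{V}$, a directed path $\boldsymbol{\mathbf{p}}_{i}$ from $j$ to $i$ and a directed path $\boldsymbol{\mathbf{s}}_{i}$ from $i$ to $j$, taking $\boldsymbol{\mathbf{p}}_{j}$ and $\boldsymbol{\mathbf{s}}_{j}$ to be the empty path so that $T_{\boldsymbol{\mathbf{p}}_{j}}=T_{\boldsymbol{\mathbf{s}}_{j}}=Id_{\mathbb{R}^{d}}$. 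Then for every edge $e\in\mathcal{E}_{i,i'}$ I set $\boldsymbol{\mathbf{c}}_{e}:=\boldsymbol{\mathbf{p}}_{i}*e*\boldsymbol{\mathbf{s}}_{i'}\in\mathcal{C}_{j}$, and for every vertex $i\neq j$ I set $\boldsymbol{\mathbf{d}}_{i}:=\boldsymbol{\mathbf{p}}_{i}*\boldsymbol{\mathbf{s}}_{i}\in\mathcal{C}_{j}$. This is a finite collection of cycles because $\mathcal{E}$ and $\mathcal{V}$ are finite, and the corresponding transformations all lie in $\mathcal{T}_{j,G}$; I claim that $T_{\boldsymbol{\mathbf{c}}_{e}}$ ($e\in\mathcal{E}$) together with $T_{\boldsymbol{\mathbf{d}}_{i}}$ ($i\neq j$) already generate $\mathcal{T}_{j,G}$, so that relabelling these cycles as $\boldsymbol{\mathbf{e}}_{1},\dots,\boldsymbol{\mathbf{e}}_{k}$ finishes the proof.

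To prove the claim it suffices to express an arbitrary $T_{\boldsymbol{\mathbf{e}}}$ with $\boldsymbol{\mathbf{e}}=(e_{1},\dots,e_{n})\in\mathcal{C}_{j}$ as a word in the $T_{\boldsymbol{\mathbf{c}}_{e}}$ and $T_{\boldsymbol{\mathbf{d}}_{i}}$. Let $j=v_{0},v_{1},\dots,v_{n}=j$ be the vertices visited by $\boldsymbol{\mathbf{e}}$, so $e_{t}\in\mathcal{E}_{v_{t-1},v_{t}}$. Writing $A_{e}:=T_{\boldsymbol{\mathbf{c}}_{e}}=T_{\boldsymbol{\mathbf{p}}_{i}}\circ T_{e}\circ T_{\boldsymbol{\mathbf{s}}_{i'}}$ and $B_{i}:=T_{\boldsymbol{\mathbf{d}}_{i}}=T_{\boldsymbol{\mathbf{p}}_{i}}\circ T_{\boldsymbol{\mathbf{s}}_{i}}$ (with $B_{j}:=Id_{\mathbb{R}^{d}}$), a direct computation gives the identity
\[
T_{e_{t}}=T_{\boldsymbol{\mathbf{p}}_{v_{t-1}}}^{-1}\circ A_{e_{t}}\circ B_{v_{t}}^{-1}\circ T_{\boldsymbol{\mathbf{p}}_{v_{t}}}.
\]
Multiplying these over $t=1,\dots,n$, the inner factors $T_{\boldsymbol{\mathbf{p}}_{v_{t}}}$ telescope, and since $T_{\boldsymbol{\mathbf{p}}_{v_{0}}}=T_{\boldsymbol{\mathbf{p}}_{v_{n}}}=T_{\boldsymbol{\mathbf{p}}_{j}}=Id_{\mathbb{R}^{d}}$ one obtains
\[
T_{\boldsymbol{\mathbf{e}}}=T_{e_{1}}\circ\dots\circ T_{e_{n}}=A_{e_{1}}\circ B_{v_{1}}^{-1}\circ A_{e_{2}}\circ B_{v_{2}}^{-1}\circ\dots\circ A_{e_{n}}\circ B_{v_{n}}^{-1},
\]
which is a word in the chosen finite generating set. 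Hence $\left\{ T_{\boldsymbol{\mathbf{c}}_{e}}\right\} _{e\in\mathcal{E}}\cup\left\{ T_{\boldsymbol{\mathbf{d}}_{i}}\right\} _{i\neq j}$ generates $\mathcal{T}_{j,G}$, which is the assertion of the lemma.

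I expect the only point requiring genuine care to be the telescoping bookkeeping in the last display, and in particular the reason the auxiliary cycles $\boldsymbol{\mathbf{d}}_{i}$ have to be included alongside the lassos $\boldsymbol{\mathbf{c}}_{e}$: after routing through the $\boldsymbol{\mathbf{c}}_{e}$ one is left with trailing factors $T_{\boldsymbol{\mathbf{s}}_{v_{t}}}^{-1}$, and $T_{\boldsymbol{\mathbf{s}}_{v_{t}}}^{-1}$ need not itself be of the form $T_{\boldsymbol{\mathbf{f}}}$ for a directed path $\boldsymbol{\mathbf{f}}$, so it cannot be absorbed by edge transformations alone; the cycles $\boldsymbol{\mathbf{d}}_{i}$, which realise $T_{\boldsymbol{\mathbf{p}}_{i}}\circ T_{\boldsymbol{\mathbf{s}}_{i}}$, are precisely what is needed to cancel these factors. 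Everything else — checking that each $\boldsymbol{\mathbf{c}}_{e}$ and each $\boldsymbol{\mathbf{d}}_{i}$ is a genuine directed cycle based at $j$, and verifying the elementary identity for $T_{e_{t}}$ from the definitions of $A_{e_{t}}$ and $B_{v_{t}}$ — is routine.
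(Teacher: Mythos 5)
Your proposal is correct and follows essentially the same route as the paper: both fix base paths to and from $j$, take as generators the ``lasso'' cycles $\boldsymbol{\mathbf{p}}_{i}*e*\boldsymbol{\mathbf{s}}_{i'}$ together with the round trips $\boldsymbol{\mathbf{p}}_{i}*\boldsymbol{\mathbf{s}}_{i}$, and express an arbitrary $T_{\boldsymbol{\mathbf{e}}}$ by the same telescoping identity (the paper merely groups the factors as $B^{-1}\circ A$ rather than $A\circ B^{-1}$). Your verification of the per-edge identity and of why the round-trip cycles are needed matches the paper's argument exactly.
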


\begin{proof}
For every $i\in\mathcal{V}$, $i\neq j$ let us fix a directed path
$a_{i}$ from $j$ to $i$ and a directed path $b_{i}$ from $i$
to $j$. For $i=j$ let $a_{j}$ and $b_{j}$ be the empty path. We
claim that the finite set of cycles
\[
\bigcup_{i,l\in\mathcal{V}}\left\{ a_{i}*e*b_{l}:e\in\mathcal{E}_{i,l}\right\} \bigcup\left\{ a_{i}*b_{i}:i\in\mathcal{V}\right\} \subseteq\mathcal{C}_{j}
\]
satisfies the lemma. Let $\boldsymbol{\mathbf{e}}=(e_{1},\ldots,e_{n})\in\mathcal{C}_{j}$
be an arbitrary cycle that visits the vertices $i_{0},i_{1},\dots,i_{n}$
respectively. Then with that convention that $T_{\emptyset}=Id_{\mathbb{R}^{d}}$
we have that 
\[
\begin{aligned}T_{\boldsymbol{\mathbf{e}}} & =T_{e_{1}}\circ\dots\circ T_{e_{n}}\\
 & =T_{b_{i_{0}}}^{-1}\circ T_{e_{1}}\circ T_{b_{i_{1}}}\circ\dots\circ T_{b_{i_{n-1}}}^{-1}\circ T_{e_{n}}\circ T_{b_{i_{n}}}\\
 & =T_{a_{i_{0}}*b_{i_{0}}}^{-1}\circ T_{a_{i_{0}}*e_{1}*b_{i_{1}}}\circ\dots\circ T_{a_{i_{n-1}}*b_{i_{n-1}}}^{-1}\circ T_{a_{i_{n-1}}*e_{n}*b_{i_{n}}}
\end{aligned}
\]
which completes the proof.
\end{proof}

The proof of the following lemma is based on the idea of the beginning
of the proof of Peres and Shmerkin \cite[Theorem 2]{Peres-Schmerkin Resonance between Cantor sets}.

\begin{lem}
\label{lem:justdimapprox lem}Let $\left\{ S_{e}:e\in\mathcal{E}\right\} $
be a strongly connected GD-IFS in $\mathbb{R}^{d}$ with attractor
$\left(K_{1},\ldots,K_{q}\right)$, let $j\in\mathcal{V}$ and let
$\boldsymbol{\mathbf{e}}\in\mathcal{C}_{j}$. Then for every $\varepsilon>0$
there exists an SS-IFS $\left\{ \widehat{S_{i}}\right\} _{i=1}^{n}$
that satisfies the SSC, with attractor $\widehat{K}$ such that $\widehat{K}\subseteq K_{\boldsymbol{\mathbf{e}}}$
and $\dim_{H}K_{j}-\varepsilon<\dim_{H}\widehat{K}$.
\end{lem}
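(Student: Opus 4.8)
The plan is to find, inside the single cylinder $K_{\boldsymbol{\mathbf{e}}}\subseteq K_j$, a large finite sub-collection of deeper cylinders whose associated maps form an SS-IFS satisfying the SSC and having similarity dimension close to $\dim_H K_j$. Write $s=\dim_H K_j$; since the GD-IFS is strongly connected we have $\dim_H K_i=s$ for every $i$, and $s>0$ by the global assumption. Fix $\varepsilon>0$ and choose $s'<s$ with $s-\varepsilon<s'$ and, say, $s'<s$ strictly. The strategy is: (1) produce, for every small scale, a family of cylinders in $\mathcal{C}_j$ of comparable size that is ``rich enough'' to witness dimension $>s'$; (2) use strong connectedness to route all of these through $\boldsymbol{\mathbf{e}}$, so that they lie inside $K_{\boldsymbol{\mathbf{e}}}$; (3) prune the family to get the SSC while keeping the similarity dimension above $s'$; (4) conclude $\dim_H \widehat{K}\geq s'>s-\varepsilon$ via the standard dimension formula for SS-IFS with the SSC.

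For step (1), the natural tool is a mass-distribution / pressure argument. Since $\dim_H K_j=s>s'$, for each $\delta>0$ the cover of $K_j$ by the level-$n$ cylinders $\{K_{\boldsymbol{\mathbf{f}}}:\boldsymbol{\mathbf{f}}\in\bigcup_{i}\mathcal{E}_{j,i}^{n}\}$ refined to a ``stopping set'' $\mathcal{W}_\delta$ of cylinders with $\operatorname{diam}(K_{\boldsymbol{\mathbf{f}}})\approx\delta$ cannot be an efficient $s'$-cover; concretely there is a constant $c>0$ with $\sum_{\boldsymbol{\mathbf{f}}\in\mathcal{W}_\delta} r_{\boldsymbol{\mathbf{f}}}^{s'}\geq c\,\delta^{\,s'-s}\to\infty$ as $\delta\to 0$ (this is where strong connectedness enters, guaranteeing the stopping set reaches every vertex and the counting is uniform; alternatively one invokes the standard fact that for a graph-directed attractor the $s'$-dimensional ``Hausdorff content'' of the level sets blows up). Passing to a single target vertex $l$ if necessary and noting the $r_{\boldsymbol{\mathbf{f}}}$ are comparable (all $\approx\delta$), we get for $\delta$ small a family $\mathcal{F}\subseteq\mathcal{C}_j\cup\mathcal{E}_{j,l}^{*}$ of cylinders of roughly equal ratio $\rho\approx\delta$ with $|\mathcal{F}|\cdot\rho^{s'}$ as large as we like — in particular we can arrange $|\mathcal{F}|\geq \rho^{-s'}\cdot(\text{large})$, so the similarity dimension $s_{\mathcal{F}}$ of the (not yet separated) system $\{S_{\boldsymbol{\mathbf{f}}}\}_{\boldsymbol{\mathbf{f}}\in\mathcal{F}}$ — solving $\sum r_{\boldsymbol{\mathbf{f}}}^{s_{\mathcal{F}}}=1$ — satisfies $s_{\mathcal{F}}>s'$.

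For step (2), apply strong connectedness to prepend the fixed cycle $\boldsymbol{\mathbf{e}}$: if $\boldsymbol{\mathbf{f}}\in\mathcal{E}_{j,l}^{*}$, pick a path $\boldsymbol{\mathbf{g}}_l$ from $l$ to $j$ once and for all, and replace each $\boldsymbol{\mathbf{f}}$ by $\boldsymbol{\mathbf{e}}*\boldsymbol{\mathbf{f}}*\boldsymbol{\mathbf{g}}_l\in\mathcal{C}_j$; then $K_{\boldsymbol{\mathbf{e}}*\boldsymbol{\mathbf{f}}*\boldsymbol{\mathbf{g}}_l}\subseteq K_{\boldsymbol{\mathbf{e}}}$. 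Prepending and appending fixed words only multiplies every ratio by the fixed constant $r_{\boldsymbol{\mathbf{e}}}r_{\boldsymbol{\mathbf{g}}_l}$, which shifts the similarity dimension by an amount tending to $0$ as the common scale $\rho\to 0$ (because $\sum (\text{const}\cdot r_{\boldsymbol{\mathbf{f}}})^{s'} = \text{const}^{s'}\sum r_{\boldsymbol{\mathbf{f}}}^{s'}$ still blows up); so after passing to a smaller $\delta$ we retain similarity dimension $>s'$ for the new family $\mathcal{F}'\subseteq\mathcal{C}_j$, all of whose cylinders sit inside $K_{\boldsymbol{\mathbf{e}}}$. For step (3), the SSC: the cylinders in $\mathcal{F}'$ need not be disjoint, but by Lemma \ref{lem: non singleton lem} there are $\boldsymbol{\mathbf{a}},\boldsymbol{\mathbf{b}}\in\mathcal{C}_j$ with $S_{\boldsymbol{\mathbf{a}}},S_{\boldsymbol{\mathbf{b}}}$ having no common fixed point; composing each $\boldsymbol{\mathbf{f}}\in\mathcal{F}'$ on the right with a long power of $\boldsymbol{\mathbf{a}}$ (or of $\boldsymbol{\mathbf{b}}$) shrinks $K_{\boldsymbol{\mathbf{f}}}$ into a tiny neighbourhood of $S_{\boldsymbol{\mathbf{f}}}(\text{fix }S_{\boldsymbol{\mathbf{a}}})$, and — this is the delicate bookkeeping — a counting/pigeonhole argument in the spirit of Lemma \ref{lem:FP change} lets one thin out the family to a sub-family $\{\widehat{S_i}\}_{i=1}^{n}$, $\widehat{S_i}=S_{\boldsymbol{\mathbf{f}}_i}\circ S_{\boldsymbol{\mathbf{a}}}^{k_i}$ or $S_{\boldsymbol{\mathbf{f}}_i}\circ S_{\boldsymbol{\mathbf{b}}}^{k_i}$, whose images $\widehat{S_i}(K_{\boldsymbol{\mathbf{e}}})$ are pairwise disjoint while still $n\cdot\rho^{s'}$ is large, so the attractor $\widehat{K}$ of $\{\widehat{S_i}\}$ satisfies the SSC, $\widehat{K}\subseteq K_{\boldsymbol{\mathbf{e}}}$, and its similarity dimension — hence its Hausdorff dimension, by the SSC — exceeds $s'>s-\varepsilon$.

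The main obstacle is step (3) together with the uniformity in step (1): one must simultaneously (a) get the $s'$-sums to diverge at a controlled rate so that there is ``room'' to discard most cylinders and still keep the count large, and (b) perform the disjointification by composing with powers of the fixed non-commuting pair without losing more than an $\varepsilon$-worth of dimension. Quantitatively, after thinning, one needs $n\geq\rho^{-s'+o(1)}$ surviving maps of ratio $\approx\rho$; the separation step costs a further fixed factor $r_{\boldsymbol{\mathbf{a}}}^{k}$ in the ratio which, for $k$ bounded independently of $\rho$ but $\rho$ small, is harmless. I expect the cleanest route is to first fix the pair $\boldsymbol{\mathbf{a}},\boldsymbol{\mathbf{b}}$ and a separation scale, then send $\rho\to 0$, so that the ``number of cylinders within a given region'' grows polynomially while the number one must delete for separation grows only by a bounded factor per region — exactly the trade-off exploited by Peres–Shmerkin. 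Granting the divergence estimate of step (1) (which is where strong connectedness and $\dim_H K_j=s$ are really used), the rest is the standard SS-IFS dimension computation.
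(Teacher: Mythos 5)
Your steps (1), (2) and (4) are essentially the paper's argument, but step (3) — the disjointification — contains a genuine gap, and it is precisely the point where the paper does something different and simpler. You propose to take a possibly heavily overlapping family of stopping-time cylinders and then force the SSC by composing each map on the right with powers of $S_{\boldsymbol{\mathbf{a}}}$ or $S_{\boldsymbol{\mathbf{b}}}$ (maps with distinct fixed points), ``in the spirit of Lemma \ref{lem:FP change}''. That lemma only yields maps with pairwise distinct fixed points, not disjoint images; to upgrade distinct fixed points to disjoint cylinders you must take powers $k_{i}$ large enough that $r^{k_{i}}\mathrm{diam}(K_{j})$ is smaller than the minimal gap between the relevant fixed points, and when two of your cylinders $K_{\boldsymbol{\mathbf{f}}_{1}},K_{\boldsymbol{\mathbf{f}}_{2}}$ essentially coincide the points $S_{\boldsymbol{\mathbf{f}}_{1}}(x_{\boldsymbol{\mathbf{a}}})$ and $S_{\boldsymbol{\mathbf{f}}_{2}}(x_{\boldsymbol{\mathbf{a}}})$ (and likewise for $x_{\boldsymbol{\mathbf{b}}}$) can be arbitrarily close or equal, so no $k$ bounded independently of $\rho$ works and you are forced to discard cylinders. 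You assert that only ``a bounded factor per region'' is lost, but you give no argument, and without one the whole dimension count collapses: a priori almost all of your $\rho^{-s'}$ cylinders could be crowded into a single ball of radius $\rho$.

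The paper avoids this entirely with a Vitali-type argument that produces the cardinality bound and the SSC simultaneously. Since $t=\dim_{H}K_{\boldsymbol{\mathbf{e}}}$ one has $\mathcal{H}^{t-\varepsilon/2}(K_{\boldsymbol{\mathbf{e}}})=\infty$, so for small $\delta$ \emph{every} $3\delta$-cover of $K_{\boldsymbol{\mathbf{e}}}$ has $(t-\tfrac{\varepsilon}{2})$-sum exceeding $1$. One then takes the stopping-time cylinders $K_{\boldsymbol{\mathbf{f}}}$, $\boldsymbol{\mathbf{f}}=\boldsymbol{\mathbf{e}}*\boldsymbol{\mathbf{g}}$, of diameter in $[r_{\min}\delta,\delta)$, and chooses a \emph{maximal pairwise disjoint} sub-collection $K_{\boldsymbol{\mathbf{f}}_{1}},\dots,K_{\boldsymbol{\mathbf{f}}_{n}}$. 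Maximality forces the $\delta$-neighbourhoods of these $n$ sets to form a $3\delta$-cover of $K_{\boldsymbol{\mathbf{e}}}$, whence $n(3\delta)^{t-\varepsilon/2}>1$, i.e. $n\geq(3\delta)^{-(t-\varepsilon/2)}$; appending the fixed return paths $b_{i_{l}}$ (your $\boldsymbol{\mathbf{g}}_{l}$) only costs a multiplicative constant in the ratios, and the resulting SS-IFS $\{S_{\boldsymbol{\mathbf{f}}_{l}*b_{i_{l}}}\}_{l=1}^{n}$ automatically satisfies the SSC because the $K_{\boldsymbol{\mathbf{f}}_{l}}$ were chosen disjoint. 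This is the missing idea: select the disjoint subfamily \emph{first} and prove it is large, rather than trying to separate an overlapping family afterwards. (Your step (1) is also stated with an unjustified rate $c\,\delta^{s'-s}$; only the divergence $\mathcal{H}_{\delta}^{s'}(K_{j})\to\infty$ is available in general, but that is all that is needed.)
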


\selectlanguage{english}%
\begin{proof}
Let $t=\dim_{H}K_{j}=\dim_{H}K_{\boldsymbol{\mathbf{e}}}$. Since
$K_{j}$ has at least two points it follows that $K_{j}$ has infinitely
many points and so $\mathcal{H}^{0}(K_{j})=\infty$. On the other
hand, since \foreignlanguage{british}{$\left\{ S_{e}:e\in\mathcal{E}\right\} $
is strongly connected }$\mathcal{H}^{t}(K_{j})<\infty$ by \cite[Thm 3.2]{Falconer Techniques}.
Thus $t>0$ and hence without the loss of generality we can assume
that $t>\varepsilon>0$. Since $\mathcal{H}^{t-\frac{\varepsilon}{2}}(K_{\boldsymbol{\mathbf{e}}})=\infty$
we can find $\delta>0$ such that for any $3\delta$-cover $\mathcal{U}$
of $K_{\boldsymbol{\mathbf{e}}}$ we have that $\sum_{U\in\mathcal{U}}\mathrm{diam}(U)^{t-\frac{\varepsilon}{2}}>1$.
Let $r_{\mathrm{min}}=\min\left\{ r_{e}:e\in\mathcal{E}\right\} <1$
and let
\[
\mathcal{J}=\left\{ \boldsymbol{\mathbf{f}}\in\bigcup_{i\in\mathcal{V}}\bigcup_{k=1}^{\infty}\mathcal{E}_{j,i}^{k}:\exists\boldsymbol{\mathbf{g}}\in\bigcup_{i\in\mathcal{V}}\bigcup_{k=1}^{\infty}\mathcal{E}_{j,i}^{k},\boldsymbol{\mathbf{f}}=\boldsymbol{\mathbf{e}}*\boldsymbol{\mathbf{g}},\,r_{\mathrm{min}}\delta\leq\mathrm{diam}(K_{\boldsymbol{\mathbf{f}}})<\delta\right\} .
\]
Then $\left\{ K_{\boldsymbol{\mathbf{f}}}:\boldsymbol{\mathbf{f}}\in\mathcal{J}\right\} $
is a cover of $K_{\boldsymbol{\mathbf{e}}}$. Let $\boldsymbol{\mathbf{f}}_{1},\ldots,\boldsymbol{\mathbf{f}}_{n}\in\mathcal{J}$
be such that $K_{\boldsymbol{\mathbf{f}}_{1}},\ldots,K_{\boldsymbol{\mathbf{f}}_{n}}$
is a maximal pairwise disjoint sub-collection of $\left\{ K_{\boldsymbol{\mathbf{f}}}:\boldsymbol{\mathbf{f}}\in\mathcal{J}\right\} $.
Let $U_{i}$ be the $\delta$-neighbourhood of $K_{\boldsymbol{\mathbf{f}}_{i}}$
for $i\in\left\{ 1,\ldots,n\right\} $. By the maximality $\left\{ U_{i}:i\in\left\{ 1,\ldots,n\right\} \right\} $
is a $3\delta$-cover of $K_{\boldsymbol{\mathbf{e}}}$. Hence by
the choice of $\delta$
\[
\sum_{i=1}^{n}\left(3\delta\right)^{t-\frac{\varepsilon}{2}}\geq\sum_{i=1}^{n}\left(\mathrm{diam}(U_{i})\right)^{t-\frac{\varepsilon}{2}}>1.
\]
It follows that
\begin{equation}
n\geq\left(3\delta\right)^{-\left(t-\frac{\varepsilon}{2}\right)}.\label{eq:n>...}
\end{equation}
\foreignlanguage{british}{Assume that the paths $\boldsymbol{\mathbf{f}}_{1},\ldots,\boldsymbol{\mathbf{f}}_{n}$
end in vertices $i_{1},\dots,i_{n}$ respectively. For every $i\in\mathcal{V}$
let us fix a directed path $b_{i}$ from $i$ to $j$. Let $c_{\mathrm{min}}=\min_{i\in\mathcal{V}}\left\{ r_{b_{i}}\frac{\mathrm{diam}(K_{j})}{\mathrm{diam}(K_{i})}\right\} $.
Then
\begin{equation}
\mathrm{diam}(K_{\boldsymbol{\mathbf{f}}_{l}*b_{i_{l}}})\geq c_{\mathrm{min}}\cdot\mathrm{diam}(K_{\boldsymbol{\mathbf{f}}_{l}})\geq c_{\mathrm{min}}\cdot r_{\mathrm{min}}\cdot\delta\label{eq:diam lowerbound}
\end{equation}
and $\boldsymbol{\mathbf{f}}_{l}*b_{i_{l}}\in\mathcal{C}_{j}$ for
every $l\in\left\{ 1,\dots,n\right\} $.}

Let $\widehat{K}$ be the attractor of the SS-IFS $\left\{ S_{\boldsymbol{\mathbf{f}}_{l}*b_{i_{l}}}\right\} _{l=1}^{n}$.
Then $\widehat{K}\subseteq K_{\boldsymbol{\mathbf{e}}}$, the SS-IFS
$\left\{ S_{\boldsymbol{\mathbf{f}}_{l}*b_{i_{l}}}\right\} _{l=1}^{n}$
satisfies the SSC and
\[
\dim_{H}\widehat{K}\geq\frac{\log(\frac{1}{n})}{\log(\frac{c_{\mathrm{min}}\cdot r_{\mathrm{min}}\cdot\delta}{\mathrm{diam}(K_{j})})}\geq\frac{-\left(t-\frac{\varepsilon}{2}\right)\cdot\log(3)-\left(t-\frac{\varepsilon}{2}\right)\cdot\log(\delta)}{\log(\mathrm{diam}(K_{j}))-\log(c_{\mathrm{min}})-\log(r_{\mathrm{min}})-\log(\delta)}
\]
by (\ref{eq:n>...}), (\ref{eq:diam lowerbound}) and because the
similarity dimension of $\left\{ S_{\boldsymbol{\mathbf{f}}_{l}*b_{i_{l}}}\right\} _{l=1}^{n}$
is $\dim_{H}\widehat{K}$, see (\ref{eq: S-dim sum}). So, by choosing
$\delta$ small enough, $\dim_{H}\widehat{K}>t-\varepsilon$. Hence
the SS-IFS $\left\{ \widehat{S_{i}}\right\} _{i=1}^{n}=\left\{ S_{\boldsymbol{\mathbf{f}}_{l}*b_{i_{l}}}\right\} _{l=1}^{n}$
satisfies the lemma.
\end{proof}
\selectlanguage{british}%

Similar ideas to the proof of Theorem \ref{thm: MAIN} were used by
Farkas \cite[Proposition 1.8]{Farkas-Linim-only} in the case of self-similar
sets.\\

\selectlanguage{english}%
\noindent \textit{Proof of Theorem }\foreignlanguage{british}{\ref{thm: MAIN}.
According to Lemma \ref{lem: non singleton lem} and Lemma \ref{lem:finitegen lem}
there exist $\boldsymbol{\mathbf{e}}_{1},\dots,\boldsymbol{\mathbf{e}}_{k}\in\mathcal{C}_{j}$
such that }$S_{\boldsymbol{\mathbf{e}}_{1}}$ and $S_{\boldsymbol{\mathbf{e}}_{2}}$
have no common fixed point and \foreignlanguage{british}{$T_{\boldsymbol{\mathbf{e}}_{1}},\dots,T_{\boldsymbol{\mathbf{e}}_{k}}$
generate $\mathcal{T}_{j,G}$ (note that $\boldsymbol{\mathbf{e}}_{1}\in\mathcal{C}_{j}$
can be chosen arbitrarily, see Remark \ref{log r}). Hence by Lemma
\ref{lem:FP change} there exist $\boldsymbol{\mathbf{f}}_{1},\dots,\boldsymbol{\mathbf{f}}_{k}\in\mathcal{C}_{j}$
such that $S_{\boldsymbol{\mathbf{f}}_{i}}$ and $S_{\boldsymbol{\mathbf{f}}_{l}}$
have no common fixed point for every $i,l\in\left\{ 1,\dots,k\right\} $
where $i\neq l$, $\boldsymbol{\mathbf{f}}_{1}=\boldsymbol{\mathbf{e}}_{1}$,
$\boldsymbol{\mathbf{f}}_{2}=\boldsymbol{\mathbf{e}}_{2}$ and $T_{\boldsymbol{\mathbf{f}}_{1}},\dots,T_{\boldsymbol{\mathbf{f}}_{k}}$
generate $\mathcal{T}_{j,G}$. Let $x_{i}$ be the unique fixed point
of $S_{\boldsymbol{\mathbf{f}}_{i}}$ for every $i\in\left\{ 1,\dots,k\right\} $.
Let $d_{\mathrm{min}}=\min\left\{ \left\Vert x_{i}-x_{l}\right\Vert :i,l\in\left\{ 1,\dots,k\right\} ,i\neq l\right\} >0$,
}$r_{\mathrm{max}}=\max\left\{ r_{\boldsymbol{\mathbf{f}}_{i}}:i\in\left\{ 1,\dots,k\right\} \right\} <1$
and $N\in\mathbb{N}$ such that $r_{\mathrm{max}}^{N}\cdot\mathrm{diam}(K_{j})<d_{\mathrm{min}}/2$.
Then $S_{\boldsymbol{\mathbf{f}}_{i}}^{k_{i}}(K_{j})\cap S_{\boldsymbol{\mathbf{f}}_{l}}^{k_{l}}(K_{j})=\emptyset$
for all $i,l\in\left\{ 1,\dots,k\right\} $, $i\neq l$, $k_{i},k_{l}\in\mathbb{N}$,
$k_{i},k_{l}\geq N$. By Proposition \ref{lem:Jordanos lem} for all
$i\in\left\{ 1,\dots,k\right\} $ we can find $k_{i}\in\mathbb{N}$,
$k_{i}\geq N$ such that the group generated by $T_{\boldsymbol{\mathbf{f}}_{i}}^{k_{i}}$
is dense in the group generated by $T_{\boldsymbol{\mathbf{f}}_{i}}$.
It follows that the group generated by $T_{\boldsymbol{\mathbf{f}}_{1}}^{k_{1}},\ldots,T_{\boldsymbol{\mathbf{f}}_{k}}^{k_{k}}$
is dense in \foreignlanguage{british}{$\mathcal{T}_{j,G}$} and $S_{\boldsymbol{\mathbf{f}}_{i}}^{k_{i}}(K)\cap S_{\boldsymbol{\mathbf{f}}_{l}}^{k_{l}}(K)=\emptyset$
for all $i,l\in\mathcal{I}$, $i\neq l$. Let $S_{i}=S_{\boldsymbol{\mathbf{f}}_{i}}^{k_{i}}$
for all $i\in\left\{ 1,\dots,k\right\} $.

Let $F=\bigcup_{i=1}^{k}S_{\boldsymbol{\mathbf{f}}_{i}}^{k_{i}}(K_{j})$.
If $K_{j}=F$ then $\left\{ S_{i}\right\} _{i=1}^{k}$ satisfies the
SSC with attractor $K=K_{j}$ and the proof is complete. So we can
assume that $F\subsetneq K_{j}$. By Lemma \ref{lem:dense piece lem}
we can find $\boldsymbol{\mathbf{e}}\in\mathcal{C}_{j}$ such that
$K_{\boldsymbol{\mathbf{e}}}\cap F=\emptyset$. It follows from Lemma
\ref{lem:justdimapprox lem} \foreignlanguage{british}{that there
exists an SS-IFS $\left\{ \widehat{S_{i}}\right\} _{i=1}^{n}$ that
satisfies the SSC with attractor $\widehat{K}$ such that $\widehat{K}\subseteq K_{\boldsymbol{\mathbf{e}}}$
and $\dim_{H}K_{j}-\varepsilon<\dim_{H}\widehat{K}$.} Let $m=k+n$,
$S_{k+l}=\widehat{S_{i}}$ for all $l\in\left\{ 1,\ldots,n\right\} $
and $K$ be the attractor of the SS-IFS \foreignlanguage{british}{$\left\{ S_{i}\right\} _{i=1}^{m}$}.
Then the transformation group $\mathcal{T}$ of \foreignlanguage{british}{$\left\{ S_{i}\right\} _{i=1}^{m}$}
is dense in \foreignlanguage{british}{$\mathcal{T}_{j,G}$}, $\widehat{K}\subseteq K\subseteq K_{j}$,
$\dim_{H}K_{j}-\varepsilon<\dim_{H}\widehat{K}\leq\dim_{H}K$ and
\foreignlanguage{british}{$\left\{ S_{i}\right\} _{i=1}^{m}$} satisfies
the SSC.$\hfill\square$\\

\selectlanguage{british}%

\begin{rem}
\label{log r}As we noted in the proof of Theorem $\ref{thm: MAIN}$
$\boldsymbol{\mathbf{e}}_{1}\in\mathcal{C}_{j}$ can be chosen arbitrarily
because if there are two maps with different fixed points then at
least one of them have a different fixed point from \foreignlanguage{english}{$S_{\boldsymbol{\mathbf{e}}_{1}}$},
and if you add any further map to a generator set of $\mathcal{T}_{j,G}$
it will remain a generator set of $\mathcal{T}_{j,G}$. The similarity
ratio of $\widehat{S_{1}}$ is $r_{1}=r_{\boldsymbol{\mathbf{f}}_{1}}^{k_{1}}=r_{\boldsymbol{\mathbf{e}}_{1}}^{k_{1}}$.
Thus $\log r_{1}=k_{1}\log r_{\boldsymbol{\mathbf{e}}_{1}}$.
\end{rem}

For $k_{1},\dots,k_{m}\in\mathbb{N}$ such that $\sum_{l=1}^{m}k_{l}=k$
let
\begin{equation}
N(k_{1},\dots,k_{m})=\left|\left\{ (i_{1},\dots,i_{k})\in\mathcal{I}^{k}:\left|\left\{ j:1\leq j\leq k,i_{j}=l\right\} \right|=k_{l}\,\mathrm{for\,every}\,1\leq l\leq m\right\} \right|,\label{eq:N(k_1,...,k_m) def}
\end{equation}
i.e. the number of words in $\mathcal{I}^{k}$ such that the symbol
$l$ appears in the word exactly $k_{l}$ times for every $1\leq l\leq m$.

\begin{lem}
\label{lem:Cheb lem}Let $(p_{1},\dots,p_{m})$ be a probability vector.
Then there exists $c>0$ such that for each $k\in\mathbb{N}$ there
exist $k_{1},\dots,k_{m}\in\mathbb{N}$ such that $\sum_{l=1}^{m}k_{l}=k$
and
\[
N(k_{1},\dots,k_{m})\geq c\cdot k^{-m/2}\cdot p_{1}^{-k_{1}}\cdot\dots\cdot p_{m}^{-k_{m}}.
\]
\end{lem}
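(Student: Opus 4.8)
The plan is to recognise that, for $k_1+\dots+k_m=k$, the number $N(k_1,\dots,k_m)$ is exactly the multinomial coefficient $\binom{k}{k_1,\dots,k_m}=k!/(k_1!\cdots k_m!)$, so that $N(k_1,\dots,k_m)\,p_1^{k_1}\cdots p_m^{k_m}$ is the probability that a $\mathrm{Multinomial}(k;p_1,\dots,p_m)$ vector equals $(k_1,\dots,k_m)$. The mode of this distribution lies within distance $1$ of $(p_1k,\dots,p_mk)$ and its mass is of order $k^{-(m-1)/2}$; since $k^{-(m-1)/2}\ge k^{-m/2}$ for $k\ge1$, it suffices to take $(k_1,\dots,k_m)$ near this mode. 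Rather than invoke a local limit theorem I would carry this out by hand with explicit Stirling bounds.

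First I would reduce to the case $p_l>0$ for all $l$: if $p_l=0$ put $k_l=0$ (with the convention $0^0=1$), which only deletes matching factors from both sides, and the target inequality with $m$ symbols is weaker than the one with the $m'\le m$ surviving symbols because $k^{-m/2}\le k^{-m'/2}$ for $k\ge1$. Next, for each $k$ I would choose integers $k_1,\dots,k_m\ge0$ with $\sum_l k_l=k$ and $|k_l-p_lk|\le1$: set $k_l=\lfloor p_lk\rfloor$ and then add $1$ to the $k-\sum_l\lfloor p_lk\rfloor$ coordinates with the largest fractional parts $\{p_lk\}$ (this is a nonnegative integer smaller than $m$). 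Only finitely many $k$ have some $p_lk<1$; for those I take an arbitrary composition and absorb the resulting finite positive ratio into $c$, so henceforth $k_l\ge1$ for every $l$.

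Then I would apply $\sqrt{2\pi n}\,(n/e)^n\le n!\le e\,\sqrt{2\pi n}\,(n/e)^n$ (valid for $n\ge1$) to $N=k!/\prod_l k_l!$. The factors $e^{-k}$ and $e^{-k_l}$ cancel since $\sum_l k_l=k$, leaving
\[
N(k_1,\dots,k_m)\ \ge\ c_m\,\frac{\sqrt{k}}{\prod_{l=1}^m\sqrt{k_l}}\cdot\frac{k^k}{\prod_{l=1}^m k_l^{k_l}}
\]
for a constant $c_m>0$ depending only on $m$. From $k_l\le p_lk+1\le 2p_lk$ (for $k$ large) one gets $\sqrt{k}/\prod_l\sqrt{k_l}\ge c'\,k^{(1-m)/2}$, where $c'$ depends only on the $p_l$. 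The crucial step is the identity
\[
\frac{k^k}{\prod_{l=1}^m k_l^{k_l}}\cdot\prod_{l=1}^m p_l^{k_l}=\prod_{l=1}^m\Big(\frac{p_lk}{k_l}\Big)^{k_l},
\]
again using $\sum_l k_l=k$. Writing $k_l=p_lk+\theta_l$ with $|\theta_l|\le1$ and expanding, $k_l\log\!\big(\tfrac{p_lk}{k_l}\big)=-k_l\log\!\big(1+\tfrac{\theta_l}{p_lk}\big)=-\theta_l+O(1/k)$, which is legitimate because $p_l>0$ is fixed; summing over $l$ gives $\prod_l(p_lk/k_l)^{k_l}\ge e^{-m-1}=:c''>0$ for all large $k$. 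Combining the three estimates yields $N(k_1,\dots,k_m)\prod_l p_l^{k_l}\ge c_mc'c''\,k^{(1-m)/2}\ge c_mc'c''\,k^{-m/2}$ for $k\ge1$, which is the assertion after renaming the constant and including the finitely many small values of $k$.

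The only genuinely delicate point is the penultimate display: bounding $\prod_l(p_lk/k_l)^{k_l}$ away from $0$, i.e. verifying that the $O(1/k)$ remainder in the logarithmic expansion is uniformly controlled (here it matters that each $p_l$ is a fixed positive number, so $\theta_l/(p_lk)\to0$). The remaining ingredients — the rounding construction, the cancellation of the exponentials in Stirling's formula, and the polynomial prefactor — are routine bookkeeping.
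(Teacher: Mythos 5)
Your proof is correct, but it is not the route the paper takes: you prove the lemma via Stirling's formula applied to the multinomial coefficient at a rounded version of $(p_1k,\dots,p_mk)$, which is precisely the alternative argument the author flags in the Note immediately following the lemma (where the stronger exponent $k^{(1-m)/2}$ is recorded) and deliberately avoids. The paper's own proof is a soft probabilistic one: it interprets $N(k_1,\dots,k_m)p_1^{k_1}\cdots p_m^{k_m}$ as a multinomial probability, uses Chebyshev's inequality to show that with probability at least $1/2$ the count vector lies in a box of side $O(\sqrt{k})$ around the mean, and then pigeonholes over the at most $(4\sqrt{k})^m$ lattice points in that box to find one with mass at least $2^{-2m-1}k^{-m/2}$. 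Your approach buys the sharper polynomial factor $k^{(1-m)/2}$ at the cost of the Stirling bounds, the rounding construction, and the uniform control of the logarithmic remainder (which you handle correctly: with $k_l=p_lk+\theta_l$, $|\theta_l|\le 1$, and $\sum_l\theta_l=0$, the exponent $\sum_l k_l\log(p_lk/k_l)$ is bounded below by a constant, indeed by $-\frac{1}{k}\sum_l p_l^{-1}$); the paper's approach is entirely elementary and, as the author notes, the weaker $k^{-m/2}$ bound suffices for the application in Theorem \ref{thm:approx trafo thm}. The remaining details of your argument --- the reduction to strictly positive $p_l$, the cancellation of the exponentials in Stirling's formula, and absorbing finitely many small $k$ into the constant --- all check out.
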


\begin{proof}
Choose $(i_{1},\dots,i_{k})\in\mathcal{I}^{k}$ at random such that
$P(i_{j}=l)=p_{l}$ independently for each $j$. Then
\begin{equation}
P\left(\left|\left\{ j:1\leq j\leq k,i_{j}=l\right\} \right|=k_{l}\,\mathrm{for\,every}\,1\leq l\leq m\right)=N(k_{1},\dots,k_{m})p_{1}^{k_{1}}\cdot\dots\cdot p_{m}^{k_{m}}.\label{eq:p_i product}
\end{equation}
Let
\[
N_{k,l}=N_{k,l}(\boldsymbol{\mathbf{i}})=\left|\left\{ j:1\leq j\leq k,i_{j}=l\right\} \right|=\sum_{j=1}^{k}\mathbf{1}_{i_{j}=l}
\]
for $\boldsymbol{\mathbf{i}}=(i_{1},\dots,i_{k})\in\mathcal{I}^{k}$.
We have that $\mathrm{E}(N_{k,l})=kp_{l}$ and $\mathrm{E}\left((N_{k,l}-kp_{l})^{2}\right)=kp_{l}(1-p_{l})$,
hence by Chebyshev's inequality
\[
P\left(\left|N_{k,l}-kp_{l}\right|\geq\sqrt{2k}\right)\leq\frac{kp_{l}(1-p_{l})}{2k}\leq p_{l}/2.
\]
Thus
\[
P\left(\left|N_{k,l}-kp_{l}\right|<\sqrt{2k}\,\mathrm{for\,every}\,1\leq l\leq m\right)\geq1-\sum_{l=1}^{m}p_{l}/2=1/2.
\]
Then by (\ref{eq:p_i product})
\[
\sum_{\begin{array}{c}
kp_{l}-\sqrt{2k}<k_{l}<kp_{l}+\sqrt{2k}\\
\mathrm{for\,every\,}l=1,\dots,m
\end{array}}N(k_{1},\dots,k_{m})p_{1}^{k_{1}}\cdot\dots\cdot p_{m}^{k_{m}}
\]
\[
=P\left(\left|N_{k,l}-kp_{l}\right|<\sqrt{2k}\,\mathrm{for\,every}\,1\leq l\leq m\right)\geq1/2.
\]
There are less than $\left(2\sqrt{2k}+1\right)^{m}\leq\left(4\sqrt{k}\right)^{m}$
terms in the sum, so there exist $k_{1},\dots,k_{m}\in\mathbb{N}$
such that $\sum_{l=1}^{m}k_{l}=k$
\[
N(k_{1},\dots,k_{m})p_{1}^{k_{1}}\cdot\dots\cdot p_{m}^{k_{m}}\geq2^{-1}\left(4\sqrt{k}\right)^{-m}=2^{-2m-1}\cdot k^{-m/2}
\]
which completes the proof.
\end{proof}

\begin{note}
Let $(k_{1},\dots,k_{m})\in\mathbb{N}^{m}$ be the closest (or one
of the closest) point to $(kp_{1},\dots,kp_{m})\in\mathbb{R}^{m}$
such that $(k_{1},\dots,k_{m})$ is on the hyperplane $x_{1}+\dots+x_{m}=k$.
Using Stirling`s formula one can show that
\[
N(k_{1},\dots,k_{m})\geq c\cdot k^{\left(1-m\right)/2}\cdot p_{1}^{-k_{1}}\cdot\dots\cdot p_{m}^{-k_{m}}
\]
for some $c>0$ independent of $k$. However, the conclusion of Lemma
\ref{lem:Cheb lem} is enough for us so we have given a more elementary
proof for that.
\end{note}

The main idea of the proof of Theorem \ref{thm:approx trafo thm}
is based on the proof by Peres and Shmerkin \cite[Proposition 6]{Peres-Schmerkin Resonance between Cantor sets}.\\

\selectlanguage{english}%
\noindent \textit{Proof of Theorem }\foreignlanguage{british}{\ref{thm:approx trafo thm}.
From Theorem \ref{thm: MAIN} there exists an SS-IFS $\left\{ \widehat{S_{i}}\right\} _{i=1}^{m}$
that satisfies the SSC with attractor $\widehat{K}$ such that $\widehat{K}\subseteq K_{j}$,
$\dim_{H}K_{j}-\frac{\varepsilon}{2}<\dim_{H}\widehat{K}$ and the
transformation group $\mathcal{T}$ of $\left\{ \widehat{S_{i}}\right\} _{i=1}^{m}$
is dense in $\mathcal{T}_{j,G}$. Let $s>0$ be the unique solution
of $\sum_{i=1}^{m}r_{i}^{s}=1$ where $r_{i}$ are the similarity
ratios of the maps $\widehat{S_{i}}$. Since $\left\{ \widehat{S_{i}}\right\} _{i=1}^{m}$
satisfies the SSC it follows that $\dim_{H}\widehat{K}=s>\dim_{H}K_{j}-\frac{\varepsilon}{2}$,
see (\ref{eq: S-dim sum}).}

\selectlanguage{british}%
Since $\overline{\mathcal{T}}$ is compact we can take a finite open
$\frac{\varepsilon}{2}$-cover $\left\{ U_{i}\right\} _{i=1}^{n}$
of $\overline{\mathcal{T}}$ and for every $i\in\left\{ 1,\dots,n\right\} $
we take $O_{i}\in U_{i}\cap\mathcal{T}$. For every $i\in\left\{ 1,\dots,n\right\} $
we fix a finite word $\boldsymbol{\mathbf{j}}_{i}\in\bigcup_{k=1}^{\infty}\mathcal{I}^{k}$
such that $\left\Vert T_{\boldsymbol{\mathbf{j}}_{i}}-O_{i}^{-1}\circ O\right\Vert <\frac{\varepsilon}{2}$
(we can find such a $\boldsymbol{\mathbf{j}}_{i}$ due to the compactness
of $\overline{\mathcal{T}}$, see for example \cite[Lemma 2.1]{Farkas-Linim-only}).
Then for every $T\in U_{i}$ it follows that
\begin{multline}
\left\Vert T\circ T_{\boldsymbol{\mathbf{j}}_{i}}-O\right\Vert \leq\left\Vert T\circ T_{\boldsymbol{\mathbf{j}}_{i}}-O_{i}\circ T_{\boldsymbol{\mathbf{j}}_{i}}+O_{i}\circ T_{\boldsymbol{\mathbf{j}}_{i}}-O\right\Vert \leq\left\Vert T-O_{i}\right\Vert +\left\Vert T_{\boldsymbol{\mathbf{j}}_{i}}-O_{i}^{-1}\circ O\right\Vert <\varepsilon.\label{eq:right trafo eq}
\end{multline}

Let $k\in\mathbb{N}$, $p_{l}=r_{l}^{s}$ for every $1\leq l\leq m$.
Then by Lemma \ref{lem:Cheb lem} there exist $k_{1},\dots,k_{m}\in\mathbb{N}$
such that $\sum_{l=1}^{m}k_{l}=k$ and
\[
N(k_{1},\dots,k_{m})\geq c\cdot k^{\left(1-m\right)/2}\cdot p_{1}^{-k_{1}}\cdot\dots\cdot p_{m}^{-k_{m}}
\]
for some $c>0$ independent of $k$. Then for every
\[
\boldsymbol{\mathbf{i}}\in\mathcal{J}_{0}:=\left\{ (i_{1},\dots,i_{k})\in\mathcal{I}^{k}:\left|\left\{ j:1\leq j\leq k,i_{j}=l\right\} \right|=k_{l}\,\mathrm{for\,every}\,1\leq l\leq m\right\} 
\]
it follows that 
\[
\rho:=r_{\boldsymbol{\mathbf{i}}}=\prod_{l=1}^{m}r_{l}^{k_{l}}
\]
and
\[
\left|\mathcal{J}_{0}\right|=N(k_{1},\dots,k_{m})\geq c\cdot k^{-m/2}\prod_{l=1}^{m}r_{i}^{-sk_{l}}.
\]

Since $\left\{ U_{i}\right\} _{i=1}^{n}$ is a finite $\frac{\varepsilon}{2}$-cover
of $\overline{\mathcal{T}}$ we can find $U_{i}$ such that for at
least $n^{-1}\left|\mathcal{J}_{0}\right|$ words $\boldsymbol{\mathbf{i}}\in\mathcal{J}_{0}$
we have that $T_{\boldsymbol{\mathbf{i}}}\in U_{i}$. Let $\mathcal{J}=\left\{ \boldsymbol{\mathbf{i}}\in\mathcal{J}_{0}:T_{\boldsymbol{\mathbf{i}}}\in U_{i}\right\} $,
then $\left\Vert T_{\boldsymbol{\mathbf{i}}}-O_{i}\right\Vert <\frac{\varepsilon}{2}$,
$r_{\boldsymbol{\mathbf{i}}}=\rho$ for every $\boldsymbol{\mathbf{i}}\in\mathcal{J}$
and
\[
\left|\mathcal{J}\right|\geq n^{-1}N(k_{1},\dots,k_{m})\geq n^{-1}c(\sqrt{k})^{-m}\prod_{l=1}^{m}r_{i}^{-sk_{l}}.
\]
Let $K$ be the attractor of the SS-IFS $\left\{ \widehat{S}_{\boldsymbol{\mathbf{i}}}\circ\widehat{S}_{\boldsymbol{\mathbf{j}}_{i}}:\boldsymbol{\mathbf{i}}\in\mathcal{J}\right\} $.
Then $r:=r_{\boldsymbol{\mathbf{i}}}r_{\boldsymbol{\mathbf{j}}_{i}}=\rho r_{\boldsymbol{\mathbf{j}}_{i}}$,
$\left\Vert T_{\boldsymbol{\mathbf{i}}}\circ T_{\boldsymbol{\mathbf{j}}_{i}}-O\right\Vert <\varepsilon$
by (\ref{eq:right trafo eq}) for every $\boldsymbol{\mathbf{i}}\in\mathcal{J}$
and by (\ref{eq: S-dim sum})
\[
\begin{alignedat}{1}\dim_{H}K=\frac{\log\left|\mathcal{J}\right|}{-\log\rho r_{\boldsymbol{\mathbf{j}}_{i}}} & \geq\frac{-\log n+\log c-m\log(\sqrt{k})-s\left(\sum_{l=1}^{m}k_{l}\log r_{l}\right)}{-\left(\sum_{l=1}^{m}k_{l}\log r_{l}\right)-\log r_{\boldsymbol{\mathbf{j}}_{i}}}\\
 & \geq s-\frac{\varepsilon}{2}>\dim_{H}K_{j}-\varepsilon
\end{alignedat}
\]
if $k$ is large enough.\foreignlanguage{english}{$\hfill\square$}\\

\begin{lem}
\label{lem:r_1 lem}Let $\left\{ S_{i}\right\} _{i=1}^{m}$ be an
SS-IFS in $\mathbb{R}^{d}$ that satisfies the SSC with attractor
$K$ and let $\varepsilon>0$. Let $\widehat{K}$ be the attractor
of the SS-IFS $\left\{ S_{\boldsymbol{\mathbf{i}}}\circ f_{\boldsymbol{\mathbf{i}}}:\boldsymbol{\mathbf{i}}\in\mathcal{I}^{k}\right\} $
where either $f_{\boldsymbol{\mathbf{i}}}=S_{1}$ or $f_{\boldsymbol{\mathbf{i}}}=Id_{\mathbb{R}^{d}}$.
For $k$ large enough $\dim_{H}K-\varepsilon<\dim_{H}\widehat{K}\leq\dim_{H}K$.
\end{lem}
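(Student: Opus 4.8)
The plan is to first reduce to the case $0<\varepsilon<s$, where $s:=\dim_{H}K$: if $s=0$ then $\dim_{H}K-\varepsilon<0\leq\dim_{H}\widehat{K}$ is immediate, and otherwise we may freely shrink $\varepsilon$. Note that $\left\{ S_{\boldsymbol{\mathbf{i}}}\circ f_{\boldsymbol{\mathbf{i}}}:\boldsymbol{\mathbf{i}}\in\mathcal{I}^{k}\right\} $ is a genuine SS-IFS, being a finite collection of contracting similarities. Since $f_{\boldsymbol{\mathbf{i}}}(K)\subseteq K$ (as $f_{\boldsymbol{\mathbf{i}}}$ equals $Id_{\mathbb{R}^{d}}$ or $S_{1}$, and $S_{1}(K)\subseteq K$), we get $S_{\boldsymbol{\mathbf{i}}}\circ f_{\boldsymbol{\mathbf{i}}}(K)\subseteq S_{\boldsymbol{\mathbf{i}}}(K)\subseteq K$ for every $\boldsymbol{\mathbf{i}}\in\mathcal{I}^{k}$, whence $\widehat{K}\subseteq K$ and in particular $\dim_{H}\widehat{K}\leq\dim_{H}K$; this disposes of the upper bound.

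For the lower bound I would first check that the new SS-IFS satisfies the SSC. Iterating the SSC of $\left\{ S_{i}\right\} _{i=1}^{m}$ shows that the cylinder sets $\left\{ S_{\boldsymbol{\mathbf{i}}}(K):\boldsymbol{\mathbf{i}}\in\mathcal{I}^{k}\right\} $ are pairwise disjoint, and since $S_{\boldsymbol{\mathbf{i}}}\circ f_{\boldsymbol{\mathbf{i}}}(\widehat{K})\subseteq S_{\boldsymbol{\mathbf{i}}}(K)$ the first-level pieces of $\widehat{K}$ are pairwise disjoint, so the SSC holds for $\left\{ S_{\boldsymbol{\mathbf{i}}}\circ f_{\boldsymbol{\mathbf{i}}}\right\} $. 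Consequently $\dim_{H}\widehat{K}$ equals the similarity dimension of the new system, i.e. the unique $t$ solving $\sum_{\boldsymbol{\mathbf{i}}\in\mathcal{I}^{k}}(r_{\boldsymbol{\mathbf{i}}}\rho_{\boldsymbol{\mathbf{i}}})^{t}=1$ (see (\ref{eq: S-dim sum})), where $\rho_{\boldsymbol{\mathbf{i}}}=r_{1}$ if $f_{\boldsymbol{\mathbf{i}}}=S_{1}$ and $\rho_{\boldsymbol{\mathbf{i}}}=1$ if $f_{\boldsymbol{\mathbf{i}}}=Id_{\mathbb{R}^{d}}$.

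The crux is then to bound $\sum_{\boldsymbol{\mathbf{i}}\in\mathcal{I}^{k}}(r_{\boldsymbol{\mathbf{i}}}\rho_{\boldsymbol{\mathbf{i}}})^{s-\varepsilon}$ from below. Since $s-\varepsilon>0$ and $r_{1}<1$ we have $\rho_{\boldsymbol{\mathbf{i}}}^{\,s-\varepsilon}\geq r_{1}^{\,s-\varepsilon}>0$ uniformly in $\boldsymbol{\mathbf{i}}$ (this is the one place requiring care, since $f_{\boldsymbol{\mathbf{i}}}$ may depend arbitrarily on $\boldsymbol{\mathbf{i}}$). Writing $r_{\max}=\max_{1\leq i\leq m}r_{i}<1$ we have $r_{\boldsymbol{\mathbf{i}}}\leq r_{\max}^{k}$, hence $r_{\boldsymbol{\mathbf{i}}}^{-\varepsilon}\geq r_{\max}^{-\varepsilon k}$, and together with $\sum_{\boldsymbol{\mathbf{i}}\in\mathcal{I}^{k}}r_{\boldsymbol{\mathbf{i}}}^{s}=\bigl(\sum_{i=1}^{m}r_{i}^{s}\bigr)^{k}=1$ this gives
\[
\sum_{\boldsymbol{\mathbf{i}}\in\mathcal{I}^{k}}(r_{\boldsymbol{\mathbf{i}}}\rho_{\boldsymbol{\mathbf{i}}})^{s-\varepsilon}\;\geq\;r_{1}^{\,s-\varepsilon}\,r_{\max}^{-\varepsilon k}\sum_{\boldsymbol{\mathbf{i}}\in\mathcal{I}^{k}}r_{\boldsymbol{\mathbf{i}}}^{s}\;=\;r_{1}^{\,s-\varepsilon}\,r_{\max}^{-\varepsilon k}\;\longrightarrow\;\infty\quad(k\to\infty).
\]
Thus for $k$ large enough the left-hand side exceeds $1$; since $t\mapsto\sum_{\boldsymbol{\mathbf{i}}}(r_{\boldsymbol{\mathbf{i}}}\rho_{\boldsymbol{\mathbf{i}}})^{t}$ is strictly decreasing and equals $1$ at $t=\dim_{H}\widehat{K}$, this forces $s-\varepsilon<\dim_{H}\widehat{K}$, which completes the proof. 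No step is a genuine obstacle; the only points demanding attention are the verification of the SSC for the composed system and the uniformity of the estimate $\rho_{\boldsymbol{\mathbf{i}}}^{\,s-\varepsilon}\geq r_{1}^{\,s-\varepsilon}$.
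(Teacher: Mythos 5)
Your proposal is correct and follows essentially the same route as the paper: both compare the composed system's similarity-dimension sum at exponent $s-\varepsilon$, noting that composing with $f_{\boldsymbol{\mathbf{i}}}$ costs at most a fixed factor $r_{1}^{s-\varepsilon}$ while $\sum_{\boldsymbol{\mathbf{i}}\in\mathcal{I}^{k}}r_{\boldsymbol{\mathbf{i}}}^{s-\varepsilon}$ grows without bound in $k$ (the paper computes it exactly as $\bigl(\sum_{i}r_{i}^{s-\varepsilon}\bigr)^{k}$ where you bound it below by $r_{\max}^{-\varepsilon k}$). Your explicit reduction to $0<\varepsilon<s$ and verification of the SSC for the composed system are details the paper leaves implicit, and they are handled correctly.
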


\begin{proof}
It follows that $\dim_{H}\widehat{K}\leq\dim_{H}K$ because $\widehat{K}\subseteq K$.
Since the SSC is satisfied $\dim_{H}K=s$ where $\sum_{i=1}^{m}r_{i}^{s}=1$,
see (\ref{eq: S-dim sum}). Thus $\sum_{i=1}^{m}r_{i}^{s-\varepsilon}>1$.
Let $k\in\mathbb{N}$ be such that $\left(\sum_{i=1}^{m}r_{i}^{s-\varepsilon}\right)^{k}>1/r_{1}^{s-\varepsilon}$.
Let $p_{\boldsymbol{\mathbf{i}}}=r_{1}$ if $f_{\boldsymbol{\mathbf{i}}}=S_{1}$
and $p_{\boldsymbol{\mathbf{i}}}=1$ if $f_{\boldsymbol{\mathbf{i}}}=Id_{\mathbb{R}^{d}}$.
Then
\[
\sum_{\boldsymbol{\mathbf{i}}\in\mathcal{I}^{k}}r_{\boldsymbol{\mathbf{i}}}^{s-\varepsilon}p_{\boldsymbol{\mathbf{i}}}^{s-\varepsilon}\geq\sum_{\boldsymbol{\mathbf{i}}\in\mathcal{I}^{k}}r_{\boldsymbol{\mathbf{i}}}^{s-\varepsilon}r_{1}^{s-\varepsilon}=\left(\sum_{i=1}^{m}r_{i}^{s-\varepsilon}\right)^{k}r_{1}^{s-\varepsilon}>1
\]
and hence $\dim_{H}K-\varepsilon=s-\varepsilon<\dim_{H}\widehat{K}$
because $\left\{ S_{\boldsymbol{\mathbf{i}}}\circ f_{\boldsymbol{\mathbf{i}}}:\boldsymbol{\mathbf{i}}\in\mathcal{I}^{k}\right\} $
satisfies the SSC.
\end{proof}

\selectlanguage{english}%
\noindent \textit{Proof of Theorem }\foreignlanguage{british}{\ref{thm:planar dense thm}.
}If $\left|\mathcal{T}_{j,G}\right|<\infty$ then Theorem \ref{thm:planar dense thm}
follows from Corollary \ref{cor:finite approx cor} because $\mathcal{T}_{j,G}\cap\mathbb{SO}_{2}$
is a finite cyclic group since $\mathbb{SO}_{2}$ is commutative,
so we assume that $\left|\mathcal{T}_{j,G}\right|=\infty$. By \foreignlanguage{british}{Theorem
\ref{thm: MAIN} there exists an SS-IFS $\left\{ \widehat{S_{i}}\right\} _{i=1}^{m}$
that satisfies the SSC with attractor $\widehat{K}$ such that $\widehat{K}\subseteq K_{j}$,
$\dim_{H}K_{j}-\frac{\varepsilon}{2}<\dim_{H}\widehat{K}$ and the
transformation group $\mathcal{T}$ of $\left\{ \widehat{S_{i}}\right\} _{i=1}^{m}$
is dense in $\mathcal{T}_{j,G}$. It is easy to see that $\left|\mathcal{T}\right|=\infty$
implies that $\mathcal{T}$ contains a rotation of infinite order.
If $\mathcal{T}$ contains reflections, without loss of generality
say $T_{1}$ is a reflection, we iterate the SS-IFS $\left\{ \widehat{S_{i}}\right\} _{i=1}^{m}$
a large number of times and compose the orientation reversing maps
with $\widehat{S_{1}}$. The new SS-IFS looks like $\left\{ \widehat{S}_{\boldsymbol{\mathbf{i}}}\circ f_{\boldsymbol{\mathbf{i}}}:\boldsymbol{\mathbf{i}}\in\mathcal{I}^{k}\right\} $
where $f_{\boldsymbol{\mathbf{i}}}=\widehat{S_{1}}$ if $T_{\boldsymbol{\mathbf{i}}}\notin\mathbb{SO}_{2}$
and $f_{\boldsymbol{\mathbf{i}}}=Id_{\mathbb{R}^{2}}$ if $T_{\boldsymbol{\mathbf{i}}}\in\mathbb{SO}_{2}$.
Since $\mathcal{T}$ contains a rotation of infinite order it follows
that the transformation group of $\left\{ \widehat{S}_{\boldsymbol{\mathbf{i}}}\circ f_{\boldsymbol{\mathbf{i}}}:\boldsymbol{\mathbf{i}}\in\mathcal{I}^{k}\right\} $
contains a rotation of infinite order. It follows from Lemma \ref{lem:r_1 lem}
that if we choose $k$ large enough then the Hausdorff dimension of
the attractor of $\left\{ \widehat{S}_{\boldsymbol{\mathbf{i}}}\circ f_{\boldsymbol{\mathbf{i}}}:\boldsymbol{\mathbf{i}}\in\mathcal{I}^{k}\right\} $
approximates $\dim_{H}\widehat{K}$. Hence we can assume that there
exists an SS-IFS $\left\{ \widehat{S_{i}}\right\} _{i=1}^{m}$ that
satisfies the SSC with attractor $\widehat{K}$ such that $\widehat{K}\subseteq K_{j}$,
$\dim_{H}K_{j}-\frac{\varepsilon}{2}<\dim_{H}\widehat{K}$ and $\mathcal{T}\subseteq\mathbb{SO}_{2}$
contains a rotation of infinite order.}

The rest of the proof is very similar to the proof of Theorem \ref{thm:approx trafo thm}\foreignlanguage{british}{.
There is no need for the $\frac{\varepsilon}{2}$-cover $\left\{ U_{l}\right\} _{l=1}^{p}$
of $\overline{\mathcal{T}}$. Instead we fix $\boldsymbol{\mathbf{j}}\in\mathcal{I}^{l}$
for some $l\in\mathbb{N}$ such that $T_{\boldsymbol{\mathbf{j}}}$
is of infinite order. From here on we proceed as in the the proof
of Theorem }\ref{thm:approx trafo thm}\foreignlanguage{british}{
with minor differences. Since $\mathbb{SO}_{2}$ is commutative it
follows that for all the $N(k_{1},\dots,k_{m})$ words $\boldsymbol{\mathbf{i}}\in\mathcal{J}_{0}$
we have that $T_{\boldsymbol{\mathbf{i}}}=T$ for some $T\in\mathcal{T}$.
Then either $T$ or $T\circ T_{\boldsymbol{\mathbf{j}}}$ is of infinite
order. Hence proceeding as in the proof of Theorem }\ref{thm:approx trafo thm}\foreignlanguage{british}{
we can show that either $\left\{ \widehat{S}_{\boldsymbol{\mathbf{i}}}:\boldsymbol{\mathbf{i}}\in\mathcal{J}\right\} $
or $\left\{ \widehat{S}_{\boldsymbol{\mathbf{i}}}\circ\widehat{S}_{\boldsymbol{\mathbf{j}}_{l}}:\boldsymbol{\mathbf{i}}\in\mathcal{J}\right\} $
satisfies the theorem.$\hfill\square$}\\

\selectlanguage{british}%

\selectlanguage{english}%
\begin{center}
$\mathbf{Acknowledgements}$
\par\end{center}

\selectlanguage{british}%

\selectlanguage{english}%
The author was supported by an EPSRC doctoral training grant. The
author thanks Kenneth Falconer for many valuable suggestions on improving
the exposition of the article and for the simplification of Lemma
\ref{lem:Cheb lem}.

\selectlanguage{british}%

\end{document}